\newtheorem {theoreme} {{\bf Theorem}} [section]
\newtheorem {lemme}[theoreme]{{\bf Lemma}}
\newtheorem {proposition}[theoreme]{{\bf Proposition}}
\newtheorem {remarque} {Remark}
\newtheorem {corollaire}[theoreme]{{\bf Corollary}}
\newcommand{\1}{1\!\!{\sf I}}
\newcommand{\tr}{\operatorname{tr}}
\newcommand{\Tr}{\operatorname{Tr}}
\newcommand{\N}{{\mathbb N}}
\newcommand{\R}{{\mathbb R}}
\newcommand{\C}{{\mathbb C}}
\newcommand{\vers}{\mathop{\longrightarrow}}
\numberwithin{equation}{section}
\title{Spectral properties of polynomials in independent Wigner  and  deterministic matrices
}
\author{S. T. Belinschi\thanks{CNRS, Institut de Math\'ematiques de Toulouse, Universit\'e Paul Sabatier, 118 rte de Narbonne,
  F-31062 Toulouse Cedex 09. 
E-mail: serban.belinschi@math.univ-toulouse.fr } and M. Capitaine\thanks{CNRS, Institut de Math\'ematiques de Toulouse, Universit\'e Paul Sabatier, 118 rte de Narbonne,
  F-31062 Toulouse Cedex 09. 
E-mail: mireille.capitaine@math.univ-toulouse.fr } }
\date{}
\begin{document}
\maketitle
\begin{abstract}  On the one hand, we prove that almost surely, for large dimension,  there is no eigenvalue of a Hermitian polynomial  in independent Wigner and  deterministic matrices, in any  interval lying at  some distance from  the supports  of a sequence of  deterministic probability measures, which is computed with the tools of free probability. On the other hand, we establish the strong asymptotic freeness of independent Wigner matrices and any family of deterministic matrices with strong limiting distribution.

\end{abstract}

\noindent {\bf Key words:} Random matrices; Free probability;   Asymptotic spectrum; Strong asymptotic freeness; Stieltjes transform; Operator-valued subordination.\\

%%%%%%%%%%%%%%%%%%%%%%%%%
\section{Introduction}
Free probability theory was introduced by Voiculescu around 1983 motivated by 
the isomorphism problem of von Neumann algebras of free groups. He developed a noncommutative probability theory, on a noncommutative
probability space, in which a new notion of freeness plays the
role of independence in classical probability. Around 1991, Voiculescu \cite{V}  threw a bridge connecting random matrix theory with free probability 
since he realized that the freeness property is also present for many classes of random matrices, in the
asymptotic regime when the size of the matrices tends to infinity. Since then, random matrices have played a key role in operator  algebra  whereas tools developed in operator algebras and free
probability theory could now be applied to random matrix problems.\\
For the reader's convenience, we recall the following basic definitions from free probability theory. For a thorough introduction to free probability theory, we refer to \cite{VDN}.
\begin{itemize}
\item A ${\cal C}^*$-probability space is a pair $\left({\cal A}, \tau\right)$ consisting of a unital $ {\cal C}^*$-algebra ${\cal A}$ and 
%a state $\tau$ on ${\cal A}$ i.e 
a linear map $\tau: {\cal A}\rightarrow \mathbb{C}$ such that $\tau(1_{\cal A})=1$ and $\tau(aa^*)\geq 0$ for all $a \in {\cal A}$. $\tau$ is a trace if it satisfies $\tau(ab)=\tau(ba)$ for every $(a,b)\in {\cal A}^2$. A trace is said to be faithful if $\tau(aa^*)>0$ whenever $a\neq 0$. 
An element of ${\cal A}$ is called a noncommutative random variable. 
\item The noncommutative distribution of a family $a=(a_1,\ldots,a_k)$ of noncommutative random variables in a ${\cal C}^*$-probability space $\left({\cal A}, \tau\right)$ is defined as the linear functional $\mu_a:P\mapsto \tau(P(a,a^*))$ defined on the set of polynomials in $2k$ noncommutative indeterminates, where $(a,a^*)$ denotes the $2k$-tuple $(a_1,\ldots,a_k,a_1^*,\ldots,a_k^*)$.
For any self-adjoint element $a_1$ in  ${\cal A}$,  there exists a probability  measure $\nu_{a_1}$ on $\mathbb{R}$ such that,   for every polynomial P, we have
$$\mu_{a_1}(P)=\int P(t) \mathrm{d}\nu_{a_1}(t).$$
Then,  we identify $\mu_{a_1}$ and $\nu_{a_1}$. If $\tau$ is faithful then the  support of $\nu_{a_1}$ is the spectrum of $a_1$  and thus  $\|a_1\| = \sup\{|z|, z\in \rm{support} (\nu_{a_1})\}$. 
\item A family of noncommutative random variables $(a_i)_{i\in I}$ in a ${\cal C}^*$-probability space  $\left({\cal A}, \tau\right)$ is free if for all $k\in \mathbb{N}$ and all polynomials $p_1,\ldots,p_k$ in two noncommutative indeterminates, one has 
\begin{equation}\label{freeness}
\tau(p_1(a_{i_1},a_{i_1}^*)\cdots p_k (a_{i_k},a_{i_k}^*))=0
\end{equation}
whenever $i_1\neq i_2, i_2\neq i_3, \ldots, i_{k-1}\neq i_k$ and $\tau(p_l(a_{i_l},a_{i_l}^*))=0$ for $l=1,\ldots,k$.
\item A family $(x_i)_{i\in I}$ of noncommutative random variables in a ${\cal C}^*$-probability space  $\left({\cal A}, \tau\right)$ is a semicircular system if
 $x_i=x_i^*$ for all $i\in I$, $(x_i)_{i\in I}$
is a free family and for any $k\in \mathbb{N}$, $$\tau(x_i^k)= \int t^k d\mu_{sc}(t)$$
where $d\mu_{sc}(t)=
\frac{1}{2\pi} \sqrt{4-t^2}{\1}_{[-2;2]}(t) dt$ is the semicircular standard distribution.
\item Let $k$ be a nonnull integer number. Denote by ${\cal P}$ the set of polynomials in $2k $ noncommutative indeterminates.
A sequence of families of variables $ (a_n)_{n\geq 1}  =
(a_1(n),\ldots, a_k(n))_{n\geq 1}$ in ${\cal C}^* $-probability spaces 
$\left({\cal A}_n, \tau_n\right)$ converge, when n goes to infinity, respectively  in distribution if the map 
$P\in {\cal P} \mapsto
\tau_n(
P(a_n,a_n^*))$ converges pointwise
and strongly in distribution if moreover the map 
$P\in {\cal P} \mapsto  \Vert P(a_n,a_n^*) \Vert$  converges pointwise.
\end{itemize}

Voiculescu  considered random matrices in this noncommutative probability context. Let ${\cal A}_n$ be 
the algebra of $n\times n$ matrices
whose entries are random variables with finite moments and endow this algebra with
the expectation of the normalized trace defined for any $M\in {\cal A}_n$ by 
$\tau_n(M) = \mathbb{E}[\frac{1}{n}\Tr(M)]$. Let us 
consider r  independent $n\times n$ so-called G.U.E matrices, that is to say  random  Hermitian matrices $X_n^{(v)} =  [X^{(v)}_{jk}]_{j,k=1}^n$, $v=1,\ldots,r$,   for which the  random variables $(X^{(v)}_{ii})$,
$(\sqrt{2} Re(X^{(v)}_{ij}))_{i<j}$, $(\sqrt{2} Im(X^{(v)}_{ij}))_{i<j}$ are independent centred gaussian distribution with variance $1$.
Voiculescu  discovered that these independent G.U.E  matrices  are asymptotically free and provide a model for a free semi-circular system since he established that
for any polynomial P in r noncommutative indeterminates,
\begin{equation}\label{sansD}\tau_n\left\{  P\left(\frac{ X_n^{(1)}}{\sqrt{n}},\ldots,\frac{ X_n^{(r)}}{\sqrt{n}}\right)\right\} \rightarrow_{n\rightarrow +\infty} \tau \left( P(x_1,\ldots,x_r)\right)\end{equation}
where $(x_1,\ldots,x_r)$ is a semicircular system in some ${\cal C}^*$-probability space $({\cal A}, \tau)$. 
It turns out that this result still holds on the one hand  almost surely dealing with the normalized trace $\tau_n(M) = \frac{1}{n}\Tr(M)$ instead of the mean normalized trace \cite{T}  and on the other hand
for non-gaussian Wigner matrices \cite{D}.\\
% or  many other classical matricial models (see Chapter 4 in \cite{HP}).\\

In \cite{HT}, Haagerup and Thorbj{\o}rsen proved the strong asymptotic freeness of independent G.U.E matrices, namely: almost surely,
for any polynomial P in r noncommutative indeterminates,
\begin{equation}\label{strong}\left\| P\left(\frac{ X_n^{(1)}}{\sqrt{n}},\ldots,\frac{ X_n^{(r)}}{\sqrt{n}}\right)\right\| \rightarrow_{n\rightarrow +\infty} \Vert P(x_1,\ldots,x_r) \Vert.\end{equation}
Note that this result led to the proof of the very important result in the theory of operator algebras that $Ext({\cal C}^*_{red}(F_2))$ is not  a group.
\eqref{strong} was proved for Gaussian random matrices with real or symplectic entries 
 by Schultz \cite{Schultz05}, for Wigner matrices with symmetric distribution of the entries satisfying a Poincar\'e inequality  by Capitaine and Donati-Martin \cite{CD07} and for Wigner matrices under i.i.d assumptions and  fourth moment hypotheses by Anderson \cite{A}.\\

The  result \eqref{sansD} of Voiculescu is actually more general since Voiculescu \cite{V,Voiculescu97} proved the asymptotic freeness of independent 
G.U.E matrices with an extra family of deterministic  matrices  with limiting distribution. 
Male \cite{CamilleM} established the strong  asymptotic freeness of independent 
G.U.E matrices with an extra independent family of  matrices  with strong limiting distribution. Note that in \cite{MCo}, Collins and Male established the  strong asymptotic freeness of Haar-distributed random unitary matrices 
and deterministic matrices with strong limiting distribution. \\
In this paper, on the one hand,  we 
prove the strong asymptotic freeness of independent non-Gaussian Wigner matrices and an extra family of deterministic matrices with strong limiting distribution (see Theorem \ref{thprincipal}). \\

On the other hand we prove that a sequence of deterministic measures plays a central role in the study of the
spectrum of a Hermitian  polynomial in independent Wigner matrices and deterministic matrices. Roughly speaking, these measures
are in
some sense obtained by  taking partially the limit when n goes to infinity, only for the Wigner matrices. 
We establish  that almost surely, for large n, each interval lying at  some distance from  the supports of these  deterministic measures contains no eigenvalue of the Hermitian  polynomial (see Theorem \ref{noeigenvalue}). This type of result was first established
by Bai and Silverstein in  \cite{BaiSil98} for sample covariance matrices and in \cite{BaiSil12} for Information-plus-noise type models, and by Capitaine,  Donati-Martin, F\'eral and F\'evrier  in \cite{CDMFF} for additive deformations of a Wigner matrix.
\\

 Here are the matricial models we deal with. Let $t$ and $r$ be fixed nonull integer numbers independent from $n$.\\

%%%%%%%%%%%
\noindent

%\begin{itemize}
%\item[-] 
%\noindent $\bullet$  $({\cal A},  \tau)$ is a $C^*$-probability space
% equipped with a faithful tracial state.
% For any $b\in {\cal A}$, $\Vert b \Vert =\lim_{k\rightarrow +\infty} \left\{\tau[ (bb^*)^k]\right\}^{\frac{1}{2k}}.$
% $x=(x_1,\ldots,x_r)$ is a semi-circular system in  $({\cal A},  \tau)$. $a=(a_1,\ldots,a_t)$ is a t-tuple of noncommutative   random variables which is free from the semicircular system $x=(x_1,\ldots,x_r)$ in $({\cal A},\tau$).\\
 
%\item[
\noindent $\bullet$  $(A_n^{(1)},\ldots,A_n^{(t)})$ is a $t-$tuple of  $n\times n$   deterministic matrices   such that
%\begin{enumerate}\item
  for any $u=1,\ldots,t$, \begin{equation}\label{normeAn} \sup_n \Vert A_n^{(u)} \Vert< \infty,\end{equation} where $\Vert \cdot \Vert$ denotes the spectral norm. \\

\noindent $\bullet$  We consider r  independent $n\times n$ random  Hermitian Wigner matrices $X_n^{(v)} =  [X^{(v)                }_{ij}]_{i,j=1}^n$, $v=1,\ldots,r$,  where, for each $v$,
$ [X^{(v)  }_{ij}]_{i\geq1,j\geq 1}$ is an infinite array of  random variables such that $X^{(v)}_{ii}$,
$\sqrt{2}\Re(X^{(v)}_{ij}), i<j$, $\sqrt{2} \Im(X^{(v)}_{ij}), i<j$, are independent,  centred with variance 1 and satisfy:
\begin{enumerate} \item There exists a $K_v$ and a random variable $Z^{(v)}$ with finite fourth moment for which there exists $x_0>0$ and  an  integer number $n_0>0$ such that, for any $x >x_0$ and any integer number $n >n_0$, we have
\begin{equation}\label{condition}\frac{1}{n^2} \sum_{1\leq i,j\leq n}\mathbb{P}\left( \vert X^{(v)}_{ij}\vert >x\right) \leq K_v\mathbb{P}\left(\vert Z^{(v)} \vert>x\right).\end{equation}
\item $$\sup_{1\leq i<j}\mathbb{E}(\vert X^{(v)}_{ij}\vert^3)<+\infty. $$
\end{enumerate}
%\end{itemize}
\begin{remarque}
The independence of the real and imaginary parts of the entries of the Wigner matrices is necessary in our approach since, after conditioning, we use Lemma \ref{lem1} for the real and imaginary parts of each entry.
\end{remarque}
\begin{remarque}
Note that assumption such as \eqref{condition} appears in \cite{CSBD}. It obviously holds if the $X^{(v)}_{ii}$,
$ \sqrt{2}\Re(X^{(v)}_{ij}),{i<j}$, $\sqrt{2} \Im(X^{(v)}_{ij}),{i<j}$, are identically distributed with finite fourth moment.
\end{remarque}
\noindent Here are our main results.
\begin{theoreme}\label{noeigenvalue}
Let $({\cal A},  \tau)$ be a ${\cal C}^*$-probability space
 equipped with a faithful tracial state and 
% For any $b\in {\cal A}$, $\Vert b \Vert =\lim_{k\rightarrow +\infty} \left\{\tau[ (bb^*)^k]\right\}^{\frac{1}{2k}}.$
 $x=(x_1,\ldots,x_r)$ be  a semi-circular system in  $({\cal A},  \tau)$.
Let $a_n=(a_n^{(1)},\ldots,a_n^{(t)})$ be a t-tuple of noncommutative   random variables which is free from  $x$ in $({\cal A},\tau)$ and such that the distribution of  $a_n$  in $({\cal A},\tau)$ coincides with the distribution of $(A_n^{(1)},\ldots, A_n^{(t)})$ in $({ M}_n(\mathbb{C}), \frac{1}{n}\Tr)$. Let $P$ be a self-adjoint  polynomial  in $2t+r$ noncommutative indeterminates $X_1, \ldots, X_r$, where for any $i=1,\ldots,r$, $X_i=X_i^*$, and $X_{r+1},\ldots, X_{r+t}, X_{r+1}^*,\ldots, X_{r+t}^*$. Let $[b,c]$ be a real interval  such  that there exists $\delta>0$ such that, for any large  $n$,   $[b-\delta,c+\delta]$ lies outside the support of the distribution of  the noncommutative random variable  $ P\left(x_1,\ldots,x_r, a_n^{(1)},\ldots,a_n^{(t)},(a_n^{(1)})^*,\ldots,(a_n^{(t)})^*\right)$ in $({\cal A},\tau)$.
  Then, 
almost surely,  for all large $n$, there is no eigenvalue of  the $n\times n$  matrix $ P\left(\frac{X_n^{(1)}}{\sqrt{n}},\ldots,\frac{X_n^{(r)}}{\sqrt{n}}, A_n^{(1)},\ldots,A_n^{(t)}, (A_n^{(1)})^*,\ldots,(A_n^{(t)})^*\right)$ in $[b,c].$
\end{theoreme}
\begin{remarque}
When $r=t=1$,  $A_n^{(1)}=(A_n^{(1)})^*$ and $P(X_1,X_2,X_2^*)=X_1+\frac{X_2+X_2^*}{2}$, the distribution of $P(x_1,a_n^{(1)},(a_n^{(1)})^*)$ is the so-called free convolution $\mu_{sc}\boxplus \mu_{A_n^{(1)}}$ where  $\mu_{A_n^{(1)}}=\frac{1}{n} \sum_{i=1}^n \lambda_i(A_n^{(1)})$, denoting by 
$ \lambda_i(A_n^{(1)})$, $i=1,\ldots,n$, the eigenvalues of  $A_n^{(1)}$.
\end{remarque}
\begin{theoreme}\label{thprincipal} Let $({\cal A},  \tau)$ be  a ${\cal C}^*$-probability space
 equipped with a faithful tracial state.
% For any $b\in {\cal A}$, $\Vert b \Vert =\lim_{k\rightarrow +\infty} \left\{\tau[ (bb^*)^k]\right\}^{\frac{1}{2k}}.$
Let  $x=(x_1,\ldots,x_r)$ be  a semi-circular system and  $a=(a_1,\ldots,a_t)$ be  a t-tuple of noncommutative   random variables which is free from $x$ in $({\cal A},\tau$).\\
%Assume that $(A_n^{1},\ldots,A_n^{(t)})$ strongly converges to $(a_{1},\ldots,a_t)$.  Let  $x=(x_1,\ldots,x_r)$ be a semi-circular system in $({\cal A},  %\tau)$ which is free with $(a_{1},\ldots,a_t)$ .
Assume moreover that  $(A_n^{(1)},\ldots,A_n^{(t)}, (A_n^{(1)})^*,\ldots,(A_n^{(t)})^*)$  converges strongly towards $a=(a_1,\ldots,a_t, a_1^*,\ldots,a_t^*)$ in
 $({\cal A},  \tau)$, that is
for any polynomial P in 2t noncommutative indeterminates,\\

$\frac{1}{n} \Tr P\left( A_n^{(1)},\ldots,A_n^{(t)}, (A_n^{(1)})^*,\ldots,(A_n^{(t)})^*\right)$ $$ \rightarrow_{n\rightarrow +\infty} \tau \left( P(a_1,\ldots,a_t,a_1^*,\ldots,a_t^*\right)$$ and \\

$\left\|P\left( A_n^{(1)},\ldots,A_n^{(t)}, (A_n^{(1)})^*,\ldots,(A_n^{(t)})^*\right)\right\| $ $$\rightarrow_{n\rightarrow +\infty} \left\| P(a_1,\ldots,a_t,a_1^*,\ldots,a_t^*)\right\|_{\cal A}.$$
 Then, almost surely, for any  polynomial $P$ \hspace*{-0.1cm}in $r+2t $ \hspace*{-0.16cm} noncommutative variables, 
$$ \lim_{n\rightarrow +\infty} \frac{1}{n}\Tr P\left(\frac{X_n^{(1)}}{\sqrt{n}},\ldots,\frac{X_n^{(r)}}{\sqrt{n}}, A_n^{(1)},\ldots,A_n^{(t)}, (A_n^{(1)})^*,\ldots,(A_n^{(t)})^*\right) $$\begin{equation}\label{af} =\tau \left( P\left(x_1,\ldots,x_r, a_1,\ldots,a_t, a_1^*,\ldots,a_t^*\right) \right)\end{equation} and 
$$\lim_{n\rightarrow +\infty} \left\| P\left(\frac{X_n^{(1)}}{\sqrt{n}},\ldots,\frac{X_n^{(r)}}{\sqrt{n}}, A_n^{(1)},\ldots,A_n^{(t)},(A_n^{(1)})^*,\ldots,(A_n^{(t)})^*\right) \right\|$$\begin{equation} \label{saf} =\left\| P\left(x_1,\ldots,x_r, a_1,\ldots,a_t, a_1^*,\ldots,a_t^*\right) \right\|_{\cal A}.\end{equation}
\end{theoreme}
%Note that by Proposition 2.1 in Collins Male, Theorem \ref{thprincipal} yields \\
\begin{remarque}{ Note that it is sufficient to prove Theorem \ref{noeigenvalue} and  Theorem \ref{thprincipal} for Hermitian matrices $A_n^{(1)},\ldots,A_n^{(t)}$ by considering their Hermitian and anti-Hermitian parts, so that throughout the paper we assume that the $A_n^{(i)}$'s are Hermitian.}\end{remarque}

\noindent We adopt the   strategy  from  \cite{HT} and \cite{Schultz05} based on a linearization trick and  sharp estimates on matricial Stieltjes transforms. More precisely,  both proofs of Theorem \ref{thprincipal} and Theorem \ref{noeigenvalue} are based on  the following key Lemma \ref{inclu2}.  First, note that the algebra $M_m(\C)\otimes {\cal A}$ formed by the $m\times m $ matrices with coefficients in ${\cal A}$, inherits the structure of ${\cal C}^*$-probability space with trace $\frac{1}{m} \Tr_m \otimes \tau$ and norm 
$$\Vert b \Vert = \lim_{k\rightarrow +\infty} \left( \frac{1}{m} \Tr_m \otimes \tau \left[(b^*b)^k\right]\right)^{\frac{1}{2k}}, \; \forall b \in M_m(\C)\otimes {\cal A} .$$

\begin{lemme}  \label{inclu2} Let $({\cal A},  \tau)$ be a ${\cal C}^*$-probability space
 equipped with a faithful tracial state and 
% For any $b\in {\cal A}$, $\Vert b \Vert =\lim_{k\rightarrow +\infty} \left\{\tau[ (bb^*)^k]\right\}^{\frac{1}{2k}}.$
 $x=(x_1,\ldots,x_r)$ be  a semi-circular system in  $({\cal A},  \tau)$. Let $a_n=(a_n^{(1)},\ldots,a_n^{(t)})$ be a t-tuple of noncommutative self-adjoint  random variables which is free from $x$ in $({\cal A},\tau$), such that the distribution of  $a_n$ coincides with the distribution of $(A_n^{(1)},\ldots, A_n^{(t)})$ in $({ M}_n(\mathbb{C}), \frac{1}{n}\Tr_n)$. Then, for all $m \in \N$, all self-adjoint matrices $\gamma, \alpha_1, \ldots,\alpha_r, \beta_1, \ldots, \beta_t$ of size $m\times m$ and
all $\epsilon >0$, almost surely, for all large $n$, we have\\

\noindent $
sp(\gamma \otimes I_n + \sum_{v=1}^r \alpha_v \otimes \frac{X_n^{(v)}}{\sqrt{n}}+  \sum_{u=1}^t \beta_u \otimes A_n^{(u)})$ \begin{equation} \label{spectre3} \subset
sp(\gamma \otimes 1_{\cal A} + \sum_{v=1}^r \alpha_v \otimes x_v + \sum_{u=1}^t \beta_u \otimes a_n^{(u)}) + ]-\epsilon, \epsilon[.
\end{equation}
 Here, $sp(T)$ denotes the spectrum of the operator $T$, $I_n$ the identity matrix and $1_{\cal A}$ denotes the unit of ${\cal A}$.
\end{lemme}

\begin{remarque}\label{remarqueinversible}
By a density argument, it is sufficient to prove Lemma \ref{inclu2} for invertible  self-adjoint matrices $\gamma, \alpha_1, \ldots,\alpha_r, \beta_1, \ldots, \beta_t$. This invertibility will be used in the proof of Lemma \ref{inversion}.
\end{remarque}
 The proof of \eqref{spectre3} requires sharp estimates of  $g_n(z)-\tilde g_n(z)$
where for $z\in \mathbb{C}\setminus \mathbb{R}$, $$g_n(z) =\mathbb{E} \frac{1}{m} \Tr_m \otimes \frac{1}{n} \Tr_n [ (zI_m \otimes I_n - \gamma \otimes I_n - \sum_{v=1}^r \alpha_v \otimes \frac{X_n^{(v)}}{\sqrt{n}}(\omega)-  \sum_{u=1}^t \beta_u \otimes A_n^{(u)})^{-1}]$$ and $$\tilde g_n(z) = \frac{1}{m} \Tr_m \otimes \tau [ (zI_m \otimes 1_{\cal A} - \gamma \otimes 1_{\cal A} - \sum_{v=1}^r \alpha_v \otimes x_v - \sum_{u=1}^t \beta_u \otimes a_n^{(u)})^{-1}].$$
 More precisely we are going to establish that 
 there exists a polynomial $Q$ with nonnegative coefficients such that, for $z \in \C \setminus \mathbb{R}$, 
\begin{equation} \label{estimdiffeqno}
\left|g_n(z)-\tilde g_n(z)+{\tilde{E}_n(z)}\right|\leq  \frac{Q(\vert \Im  z\vert^{-1})}{n\sqrt{n}},
\end{equation}
where $\tilde{E}_n$ is the Stieltjes transform of a compactly supported distribution $\nabla_n$ on $\mathbb{R}$ whose support is
included in the spectrum of $\gamma \otimes 1_{\cal A} +\sum_{v=1}^r \alpha_v \otimes x_v +\sum_{u=1}^t\beta_u \otimes a_n^{(u)}$ and such that $\nabla_n(1)=0$.\\
 But this required sharp estimate  makes necessary a technical piece of work and  a fit  use of free operator-valued subordination maps (see Section \ref{free}). In particular, we need an explicit development of the Stieltjes transform up to the order $\frac{1}{n\sqrt{n}}$ but the stability under perturbation argument used in \cite{CamilleM} does not provide  this development from the approximate matricial subordination equation. Therefore we use a strategy based on an invertibility property of   matricial subordination maps related to semi-circular system (see Lemma \ref{inversion}).\\
 
 Theorem \ref{thprincipal} can be deduced from Lemma \ref{inclu2} by  following the proofs in  \cite{HT} and \cite{Schultz05}. 
Given a noncommutative polynomial $P$, choosing in Lemma \ref{inclu2} the $\gamma$, $(\alpha_v)_{v=1,\ldots,r}$, $(\beta_u)_{u=1,\ldots,t}$ corresponding to a self-adjoint linearization of $P$ as defined in \cite{A} allows to deduce  Theorem \ref{noeigenvalue}. \\

In  Section \ref{troncation}, we explain why, using a truncation and Gaussian convolution procedure, it is sufficient  to prove  Theorem \ref{thprincipal}  and Theorem \ref{noeigenvalue}
 when we assume that the $X_{ij}^{(v)}$'s satisfy:
\begin{itemize} 
\item[(H)] the  variables $\sqrt{2}\Re  X_{ij}^{(v)}$, $\sqrt{2}\Im   X_{ij}^{(v)}$, $1\leq i<j$, $X_{ii}^{(v)}$, $i\geq1$, $v=1,\ldots,r$, are independent, centred with variance 1 and satisfy a Poincar\'e inequality with common constant  $C_{PI}$.
\end{itemize}

\noindent Note that, according to Corollary 3.2 in \cite{L},  (H) implies that  for any $p\in \mathbb{N}$,  \begin{equation}\label{moments}\max_{v=1,\ldots,r} \sup_{i\geq 1, j\geq 1} \mathbb{E}\left(\vert  X_{ij}^{(v)}\vert^p\right) <+\infty.\end{equation}

\noindent We also explain how the proof of Theorem \ref{thprincipal} 
 may be  reduced to prove that
for any  polynomial $P$ in $r+t $ noncommutative variables, almost surely, 
\begin{equation} \label{safbis}\lim_{n\rightarrow +\infty} \left\| P\left(\frac{X_n^{(1)}}{\sqrt{n}},\ldots,\frac{X_n^{(r)}}{\sqrt{n}}, A_n^{(1)},\ldots,A_n^{(t)}\right) \right\| =\left\| P\left(x_1,\ldots,x_r, a_1,\ldots,a_t\right) \right\|_{\cal A}.\end{equation}
Such a truncation and Gaussian convolution procedure also allows us  to relax the assumption of uniform boundness of moments of entries of Wigner matrices in the almost sure asymptotic freeness result of  Theorem 5.4.5 in \cite{AGZ} (see Proposition \ref{sansmoment} below). 
Note that Proposition \ref{sansmoment} does not need  the independence  of  real and imaginary parts of the entries of the Wigner matrices or  the strong convergence of $\{A_n^{(1)},\ldots, A_n^{(t)}\}$.

In Section \ref{Notations}, we introduce numerous notations and objects  that will be used in the proofs. Section \ref{free} provides required preliminaries on free  operator-valued subordination  properties.  The main section is Section \ref{lemmefonda} where 
we establish Lemma \ref{inclu2}.
     In Sections \ref{sectionnoeigenvalue} and \ref{strategie}, we show how to deduce Theorem \ref{noeigenvalue} and Theorem \ref{thprincipal} respectively from Lemma \ref{inclu2}.
% In Section \ref{rectangular}, we deduce an  inclusion of the spectrum of nonnegative Hermitian polynomials in independent rectangular matrices with independent entries and deterministic rectangular matrices.  
 We end with an Appendix which gathers several useful basic algebraic  lemmas and variance estimates.
\section{Truncation and Gaussian convolution}\label{troncation}
We start with a slightly modified version of Bai-Yin's theorem  (see \cite{BaiYin} for the symmetric case and Theorem 5.1 in \cite{BaiSil06} for the Hermitian case).
\begin{lemme}\label{baiyin}

Let $Z_n =  [Z_{ij}]_{i,j=1}^n$ be a Hermitian $n\times n$ matrix such that 
$( Z_{ij})_{i\leq  j}$,  are independent, centred random variables  such that 
$$\sup_{1\leq i<j}\mathbb{E}(\vert Z_{ij}\vert^3)<+\infty $$  
and there exists a real number $K>0$ and a nonnegative random variable $Y$ with finite fourth moment for which there exists $x_0>0$ and an  integer number $n_0>0$ such that, for any $x >x_0$ and any integer number $n >n_0$, we have \begin{equation}\label{majquatreZ}\frac{1}{n^2} \sum_{1\leq i\leq n,1\leq  j\leq n}\mathbb{P}\left( \vert Z_{ij}\vert >x\right) \leq K \mathbb{P}\left( Y >x\right).\end{equation}
Then, setting $\sigma^*=\{\sup_{1\leq i<j}\mathbb{E}(\vert Z_{ij}\vert^2)\}^{1/2},$
we have that almost surely $$\limsup_{n\rightarrow+\infty} \left\| \frac{Z_n}{\sqrt{n}}\right\| \leq 2\sigma^*.$$
\end{lemme}
\begin{proof} 
Noting that, for any nonnegative increasing sequence $ (u_l)_{l\geq 1}$, we have 
 $$\mathbb{E}\left( \left(\frac{Y}{\sigma^*}\right)^4\right)=4 \int_{0}^{+\infty} t^3 \mathbb{P} \left( \frac{Y}{\sigma^*}>t\right) dt \geq \sum_{l=1}^{+\infty} u_l^3 (u_{l+1}-u_l) \mathbb{P} \left( \frac{Y}{\sigma^*}>u_{l+1}\right), $$
 it readily follows  that for any $\delta>0$, choosing $u_l= \delta 2^{(l-1)/2}$, we have
 $$\sum_{l=1}^\infty 2^{2l} \mathbb{P} \left( \frac{Y}{\sigma^*} >\delta 2^{l/2}\right) <\infty.$$
 In particular, there exists an increasing sequence $(N_k)_{k\geq 1}$ of integer numbers such that 
 for any $k \geq 1$, $$\sum_{l=N_k+1}^\infty 2^{2l} \mathbb{P} \left( \frac{Y}{\sigma^*} >\frac{1}{2^{\frac{k}{8}}} 2^{l/2}\right) \leq \frac{1}{2^k}.$$
 Set for any $l \in ]0,N_{1}]$, $\epsilon_l={1}$ and for any $l \in   ]N_k,N_{k+1}]$, $\epsilon_l=\frac{1}{2^{\frac{k}{8}}}$. Then, 
 $$\sum_{l=N_1 +1}^\infty 2^{2l} \mathbb{P} \left( \frac{Y}{\sigma^*} >\epsilon_l 2^{l/2}\right)= \sum_{k=1}^{+\infty}\sum_{l=N_k+1}^{N_{k+1}} 2^{2l} \mathbb{P} \left( \frac{Y}{\sigma^*} >\frac{1}{2^{\frac{k}{8}} }2^{l/2}\right)\leq \sum_{k=1}^{+\infty}\frac{1}{2^k} <\infty.$$
 Define $\delta_n= \sqrt{2}\epsilon_l$ for $2^{l-1} < n \leq 2^{l}$. 
 Thus, we exhibited a sequence of nonnegative numbers such that $\delta_n \downarrow 0$,
  % $\delta_n \sqrt{n} \rightarrow +\infty$, 
   $\delta_n^2 \sqrt{n} \rightarrow +\infty$ and  $$\sum_{n=1}^\infty 2^{2n} \mathbb{P} \left( \frac{Y}{\sigma^*} >\delta_{2^n} 2^{(n-1)/2}\right) <\infty.$$
Let us consider $X=Z_n/\sigma^*$.
Define for any $i\geq 1, j\geq 1$,  $\check X_{ij}= X_{ij}{\1}_{\vert X_{ij} \vert \leq \sqrt{n} \delta_n} $.
We have for any $k$ large enough, 
 \begin{eqnarray*}
\mathbb{P}\left(X\neq \check X \; i.o \right) &\leq & \sum_{l=k}^\infty \mathbb{P} \left( \bigcup_{2^{l-1}< n \leq  2^{l}} \bigcup_{1\leq i,j\leq n} \left\{\vert X_{ij} \vert > \sqrt{n}\delta_n\right\} \right)\\
 &\leq & \sum_{l=k}^\infty \mathbb{P} \left( \bigcup_{2^{l-1}< n \leq  2^{l}} \bigcup_{1\leq i,j\leq n} \left\{\vert X_{ij} \vert > 2^{\frac{l-1}{2}}\delta_{2^l}\right\} \right)\\
 &\leq & \sum_{l=k}^\infty \sum_{1\leq i,j\leq 2^{l}}  \mathbb{P} \left( \vert X_{ij} \vert > 2^{\frac{l-1}{2}}\delta_{2^l} \right)\\
  &\leq & K\sum_{l=k}^\infty 2^{2l} \mathbb{P} \left( Y > 2^{\frac{l-1}{2}} \delta_{2^l } \sigma^*\right) \rightarrow_{k \rightarrow +\infty} 0.
\end{eqnarray*}

\noindent Define for $i\neq j$, $1\leq i,j\leq n,$ $ \hat X_{ij}= X_{ij}{\1}_{\vert X_{ij} \vert \leq \sqrt{n} \delta_n} $ and for any $i$, $\hat X_{ii}=0$. Then, we have $\left\| \frac{\check X}{\sqrt{n}}- \frac{\hat X}{\sqrt{n}}\right\| \leq \delta_n\rightarrow_{n\rightarrow +\infty} 0.$
\noindent Finally define for $i\neq j$, $1\leq i,j\leq n$, $\tilde X_{ij}= X_{ij}{\1}_{\vert X_{ij} \vert \leq \sqrt{n} \delta_n} -\mathbb{E}\left( X_{ij}{\1}_{\vert X_{ij} \vert \leq \sqrt{n} \delta_n}\right)$ and for any $i$, $\tilde X_{ii}=0$.
We have for large $n$ (denoting by $\Vert\cdot\Vert_2$ the Hilbert-Schmidt norm)\begin{eqnarray*} 
\left\| \frac{\hat X}{\sqrt{n}}- \frac{\tilde X}{\sqrt{n}}\right\|&\leq  &\left\| \frac{\hat X}{\sqrt{n}}- \frac{\tilde X}{\sqrt{n}}\right\|_{2}
\\&=&\left( \frac{1}{n}\sum_{i,j=1;i\neq j }^n \left|\mathbb{E}\left(  X_{ij}{\1}_{\vert X_{ij} \vert > \sqrt{n} \delta_n} \right)\right|^2\right)^{1/2}\\
&\leq&\left( \frac{1}{n^3 \delta_n^4}\sum_{i,j=1}^n \mathbb{E}\left( \left| X_{ij}^2\right| \right)\mathbb{E}\left( \left| X_{ij}\right|^4{\1}_{\vert X_{ij} \vert > \sqrt{n} \delta_n} \right)\right)^{1/2}\\
&\leq &\left( \frac{K}{n \delta_n^4}\mathbb{E}\left( \left| \frac{Y}{\sigma^*} \right|^4{\1}_{\vert \frac{Y}{\sigma^*} \vert > \sqrt{n} \delta_n} \right)\right)^{1/2} \rightarrow_{n\rightarrow +\infty} 0.
\end{eqnarray*}
Thus, we have that almost surely \begin{equation}\label{normetilde}\left\| \frac{ X}{\sqrt{n}}- \frac{\tilde X}{\sqrt{n}}\right\|\rightarrow_{n\rightarrow +\infty} 0.\end{equation}
Note that the entries of $\tilde X$ satisfy 
\begin{itemize}
\item $\tilde X_{ii}=0$;
\item $\tilde X_{ij}$, $i<j$, are independent;
\item $\mathbb{E}\left( \tilde X_{ij}\right) =0, \mathbb{E}\left( \vert \tilde X_{ij}\vert^2\right) \leq 1, \mbox{\; for \;} i\neq j$;
\item $\vert \tilde X_{ij}\vert\leq 2\delta_n \sqrt{n}, \mbox{\; for \;} i\neq j$;
\item $ \mathbb{E}\left( \vert \tilde X_{ij}\vert^l\right) \leq b \left(2\delta_n \sqrt{n}\right)^{l-3}, \mbox{for some constant } b>0 \mbox{\;and all \;} i\neq j, \; l\geq~3$.
\end{itemize}
Then, sticking to the end of  the proof of Theorem 5.1  pages 87-93    in \cite{BaiSil06}, one can prove that for any even integer $k$, one has 
$$\mathbb{E}\left(\Tr \left(\frac{\tilde X}{\sqrt{n}}\right)^k \right)\leq n^2\left[2+ (10(2\delta_n)^{1/3} k/\log n)^3\right]^k.$$
Chebychev's inequality yields that for any $\eta >2$, 
\begin{equation}\label{chebychev}\mathbb{P}\left(\left\|\frac{\tilde X}{\sqrt{n}}\right\| >\eta\right)\leq \frac{1}{\eta^k}\mathbb{E}\left(\Tr \left(\frac{\tilde X}{\sqrt{n}}\right)^k \right)
\leq 
n^2\left[\frac{2}{\eta}+ \frac{(10 (2\delta_n)^{1/3} k/\log n)^3}{\eta}\right]^k.\end{equation}
Selecting the sequence of even integers $k_n=2 \left\lfloor \frac{\log n }{\delta_n^{1/6}}\right\rfloor$ with the properties $k_n/\log n \rightarrow +\infty$ and $k_n \delta_n^{1/3}/\log n \rightarrow 0$, we obtain that the right hand side of \eqref{chebychev} is summable.
Using Borel-Cantelli's Lemma, we easily deduce  that almost surely $$\limsup_{n\rightarrow +\infty} \left\|\frac{\tilde X}{\sqrt{n}}\right\| \leq 2$$ and then, using \eqref{normetilde}, that almost surely $$\limsup_{n\rightarrow +\infty} \left\|\frac{ X}{\sqrt{n}}\right\| \leq 2.$$
\end{proof}
According to Lemma \ref{baiyin}, for any $v=1,\ldots,r$, almost surely, 
$\sup_n \left\| \frac{X_n^{(v)}}{\sqrt{n}} \right\| < +\infty.$ Therefore by a simple approximation argument using polynomials with coefficients in $\mathbb{Q} +i \mathbb{Q}$,  to establish Theorem \ref{thprincipal}, it is sufficient to prove that for any polynomial, almost surely \eqref{af} and \eqref{saf} hold. Now, we are going to show that the proof of Theorem \ref{thprincipal}  can be reduced to the proof of \eqref{saf} in the  case where the $X_{ij}^{(v)}$'s satisfy $(H)$.\\
Let $X =  [X_{jk}]_{j,k=1}^n$ be a Hermitian $n\times n$ matrix such  that the  random variables $X_{ii}$,
$\sqrt{2} \Re (X_{ij})$, $\sqrt{2} \Im (X_{ij}), {i<j},$ are independent, centred  with variance 1 such that 
$$\theta^*=\sup_{1\leq i<j}\mathbb{E}(\vert X_{ij}\vert^3)<+\infty, $$
and there exists a $K$ and a random variable $Z$ with finite fourth moment for which there exists $x_0>0$ and  an  integer number $n_0>0$ such that, for
any $x>x_0$ and  any integer number $n>n_0$, we have \begin{equation}\label{majquatre}\frac{1}{n^2} \sum_{i\leq n, j\leq n}\mathbb{P}\left( \vert  X_{ij}\vert >x\right) \leq K\mathbb{P}\left(\vert Z \vert>x\right).\end{equation}

\noindent Define for any  $C>0$, for any $1\leq i , j \leq n$,
\begin{eqnarray}Y_{ij}^C &=&\Re  X_{ij}\1_{\vert  \Re X_{ij} \vert \leq C} - \mathbb{E}\left( \Re  X_{ij}\1_{\vert \Re  X_{ij} \vert \leq C} \right) \nonumber \\&&+ \sqrt{-1} \left\{
\Im  X_{ij}\1_{\vert  \Im  X_{ij} \vert \leq C} - \mathbb{E}\left( \Im  X_{ij}\1_{\vert \Im  X_{ij} \vert \leq C} \right) \right\}.\label{ycdef}\end{eqnarray}
We have 
\begin{eqnarray*}\mathbb{E} \left( \vert   X_{ij}-Y_{ij}^C\vert^2 \right)&=&\mathbb{E} \left( \vert \Re   X_{ij}\vert^2 \1_{\vert \Re  X_{ij} \vert > C} \right)
+ \mathbb{E} \left( \vert \Im  X_{ij}\vert^2 \1_{\vert \Im  X_{ij} \vert > C} \right) \\&&
-\left\{ \mathbb{E} \left( \Re  X_{ij} \1_{\vert \Re  X_{ij} \vert > C} \right)\right\}^2-\left\{ \mathbb{E} \left( \Im   X_{ij} \1_{\vert \Im  X_{ij} \vert > C} \right)\right\}^2\\& \leq & \frac{\mathbb{E} \left(\vert \Re  X_{ij} \vert^3\right)+\mathbb{E} \left(\vert \Im  X_{ij} \vert^3\right)}{C}
\end{eqnarray*}
so that $$\sup_{i\geq 1,j\geq 1}\mathbb{E} \left( \vert  X_{ij}-Y_{ij}^C\vert^2 \right) \leq \frac{2\theta^*}{C}.$$
According to Lemma \ref{baiyin}, we have almost surely \begin{equation}\label{centragebis}\limsup_{n\rightarrow+\infty} \left\| \frac{X}{\sqrt{n}}-\frac{Y^C}{\sqrt{n}}\right\| \leq 2\frac{\sqrt{2\theta^*}}{\sqrt{C}}.\end{equation}
Note that 
\begin{eqnarray*}1 - 2 \mathbb{E} \left( \vert \Re  Y_{ij}^C\vert^2 \right) &=& 1-2 \mathbb{E}   \left\{\left(\Re   X_{ij} \1_{\vert \Re X_{ij} \vert \leq  C} -  \mathbb{E} \left(  \Re    X_{ij} \1_{\vert \Re X_{ij} \vert \leq  C}\right)\right)^2\right\}
\\&=& 2\left[ \frac{1}{2} -\mathbb{E} \left( \vert  \Re  X_{ij}\vert^2 \1_{\vert \Re X_{ij} \vert \leq  C} \right)\right] 
+2\left\{ \mathbb{E} \left( \Re X_{ij} \1_{\vert \Re  X_{ij} \vert \leq  C} \right)\right\}^2\\&=& 
2 \mathbb{E} \left( \vert \Re   X_{ij}\vert^2 \1_{\vert\Re  X_{ij} \vert > C} \right)+ 2\left\{\mathbb{E} \left(  \Re  X_{ij} \1_{\vert \Re  X_{ij} \vert > C}\right) \right\}^2. \end{eqnarray*}
so that $$\sup_{i\geq 1, j\geq 1}\vert 1 - 2\mathbb{E} \left( \vert \Re  Y_{ij}^C\vert^2 \right) \vert \leq \frac{4\theta^*}{C}.$$
Similarly $$\sup_{i\geq 1,j\geq 1}\vert 1 - 2\mathbb{E} \left( \vert \Im  Y_{ij}^C\vert^2 \right) \vert \leq \frac{4\theta^*}{C}.$$
Let us assume that $C>8\theta^*.$ Then, we have 
$$\mathbb{E} \left( \vert \Re  Y_{ij}^C\vert^2 \right)> \frac{1}{4} \; \mbox{and}\; \mathbb{E} \left( \vert \Im  Y_{ij}^C\vert^2 \right)> \frac{1}{4}.$$
Now define \begin{equation}\label{defxc}{X}_{ij}^C =\frac{\Re  Y_{ij}^C}{\sqrt{2\mathbb{E} \left( \vert \Re Y_{ij}^C\vert^2 \right)}} +\sqrt{-1}  \frac{\Im  Y_{ij}^C}{\sqrt{2\mathbb{E} \left( \vert \Im  Y_{ij}^C\vert^2 \right)}}.\end{equation}
Note that \begin{eqnarray*} {X}_{ij}^C-Y_{ij}^C&=& \Re  X_{ij}^C \left( 1-\sqrt{2}  \mathbb{E} \left( \vert \Re  Y_{ij}^C\vert^2 \right)^{1/2}\right)
+\sqrt{-1} \Im  X_{ij}^C \left( 1-\sqrt{2}  \mathbb{E} \left( \vert \Im  Y_{ij}^C\vert^2 \right)^{1/2}\right)
\\&=&\Re  X_{ij}^C\frac{1 - 2\mathbb{E} \left( \vert \Re  Y_{ij}^C\vert^2 \right) }{1 + \sqrt{2}\mathbb{E} \left( \vert \Re  Y_{ij}^C\vert^2 \right)^{1/2}}+
\sqrt{-1} \Im  X_{ij}^C\frac{1 - 2\mathbb{E} \left( \vert \Im  Y_{ij}^C\vert^2 \right) }{1 + \sqrt{2}\mathbb{E} \left( \vert \Im  Y_{ij}^C\vert^2 \right)^{1/2}}.\end{eqnarray*} 
\noindent Thus, according to Lemma \ref{baiyin}, we have almost surely \begin{equation}\label{normal}\limsup_{n\rightarrow+\infty} \left\| \frac{X^C}{\sqrt{n}}-\frac{Y^C}{\sqrt{n}}\right\| \leq \frac{{8\theta^*}}{{C}}.\end{equation}
Thus, by \eqref{centragebis} and \eqref{normal}, we obtain that almost surely \begin{equation}\label{xc}\limsup_{n\rightarrow+\infty} \left\| \frac{X}{\sqrt{n}}-\frac{X^C}{\sqrt{n}}\right\| \leq 2\frac{\sqrt{2\theta^*}}{\sqrt{C}}+ \frac{{8\theta^*}}{{C}}.\end{equation}
Let $[{\cal G}_{ij}]_{i\geq 1, j\geq 1}$ be an infinite array which is  independent of the $X_{ij}'s$ and such that $\sqrt{2} \Re {\cal G}_{ij}$, $ \sqrt{2} \Im {\cal G}_{ij}$, $i<j$, 
and  ${\cal G}_{ii}$ are independent centred standard real Gaussian variables and for any $i\geq 1, j\geq 1$,  ${\cal G}_{ji}= \overline{{\cal G}_{ij}}$ .
Thus,  ${\cal G} = [{\cal G}_{ij}]_{ i,j =1}^n $ is  a random G.U.E matrix being independent of $X$. Define 
for any $\delta>0$,
\begin{equation}\label{xcdelta}X^{C,\delta}= \frac{ X^C +\delta {\cal G}}{\sqrt{1+\delta^2}}.\end{equation}
Note that  the random variables $\sqrt{2}\Re  (X^{(v)})^{C,\delta}_{ij}$, $\sqrt{2}\Im   (X^{(v)})^{C,\delta}_{ij}$, $ i<j$, $(X^{(v)})_{ii}^{C,\delta}$ are independent, centred with variance 1.\\
From Bai-Yin's theorem  (Theorem 5.8 in \cite{BaiSil06}),
we have that almost surely, \begin{equation}\label{by}\left\| \frac{{\cal G}}{\sqrt{n}} \right\|=2+o(1)\end{equation} and by  Lemma  \ref{baiyin},
 for any $C > 8\theta^*$, we have almost surely
\begin{equation}\label{bybis} \limsup_{n\rightarrow+\infty}\left\| \frac{{X^C}}{\sqrt{n}} \right\|\leq 2.\end{equation}
Now, \eqref{xc}, \eqref{by}  and \eqref{bybis} yield that for any $C > 8 \theta^*$, any $\delta >0$, almost surely   $ \limsup_{n\rightarrow +\infty}\left\|\frac{X -{X^{C,\delta}}}{\sqrt{n}} \right\| \leq u_C +v_\delta$ where $u_C$ and $v_\delta$ are deterministic positive functions tending to zero when respectively $C$ goes to infinity and $\delta$ goes to zero. 
Hence, it is easy to see that for any $0<\epsilon<1$, there exists $C_\epsilon$ and $\delta_\epsilon$ such that almost surely for all large $n$, 
$$\left\|\frac{X -{X^{C_\epsilon,\delta_\epsilon}} }{\sqrt{n}}\right\|\leq \epsilon.$$
We can deduce that for any  polynomial $P$ in $r+t$ noncommutative variables, there exists some constant $L>0$ such that the following holds: for any $0<\epsilon<1$, there exists $C_\epsilon$ and $\delta_\epsilon$ such that almost surely for all large $n$, \\

\noindent $\left\| P\left(\frac{X_n^{(1)}}{\sqrt{n}},\ldots,\frac{X_n^{(r)}}{\sqrt{n}}, A_n^{(1)},\ldots,A_n^{(t)}\right)\right. $ \begin{equation}\label{approxnorme} \left.- P\left(\frac{(X_n^{(1)})^{C_\epsilon,\delta_\epsilon}}{\sqrt{n}},\ldots,\frac{(X_n^{(r)})^{C_\epsilon,\delta_\epsilon}}{\sqrt{n}}, A_n^{(1)},\ldots,A_n^{(t)}\right)\right\|\leq L \epsilon.\end{equation} 
Then, it is clear that it is sufficient to establish Theorem \ref{thprincipal}  and Theorem \ref{noeigenvalue} for the $(X_n^{(v)})^{C_\epsilon,\delta_\epsilon}$'s. \\Moreover, we obviously have that for any $\epsilon$ and  for any $p\in \mathbb{N}$,  \begin{equation}\label{moments}\max_{v=1,\ldots,r} \sup_{i\geq 1, j\geq 1} \mathbb{E}\left(\vert  (X_{ij}^{(v)})^{C_\epsilon,\delta_\epsilon}\vert^p\right) <+\infty.\end{equation}
 Then, \eqref{af}  is a consequence of Theorem 5.4.5 in \cite{AGZ}.\\

 Moreover, note that, by definition,   the distributions of the random variables
$\sqrt{2}\Re  (X^{(v)})_{ij}^{C_\epsilon,\delta_\epsilon}, \sqrt{2}\Im  (X^{(v)})_{ij}^{C_\epsilon,\delta_\epsilon},  i<j,  ( X^{(v)})_{ii}^{C_\epsilon,\delta_\epsilon}$,  $v=1\ldots,r$, are all    a convolution of a centred   Gaussian distribution with variance $\delta_\epsilon^2/(1+\delta_\epsilon^2)$,  with  some  law with bounded support
in the ball of radius $2C_\epsilon$; thus, according to Lemma \ref{zitt},    
they satisfy a  Poincar\'e inequality with some common  constant $ C_{PI}(C_\epsilon,\delta_\epsilon)$.\\

{\it Hence in the following we will focus on establishing Theorem \ref{noeigenvalue} and \eqref{safbis} when  the $X_{ij}^{(v)}$'s satisfy  (H).}
\\

Note that a similar truncation and Gaussian convolution procedure also allows to establish the following asymptotic freeness result.
\begin{proposition}\label{sansmoment}
Assume that $(A_n^{(1)},\ldots,A_n^{(t)})$ is a $t-$tuple of  $n\times n$   Hermitian deterministic matrices   such that
 $\max_{u\in\{1,\ldots,t\}} \sup_n \Vert A_n^{(u)} \Vert< \infty$ and  
 $(A_n^{(1)},\ldots,A_n^{(t)})$ converges  in distribution in $(M_n(\C), \frac{1}{n} \Tr_n )$ towards some t-tuple of noncommutative random variables $(a_1,\ldots a_t)$ in some noncommutative probability space $({\cal A},\tau)$.
  Let  $X_n^{(v)} =  [X^{(v) }_{ij}]_{i,j=1}^n$, $v=1,\ldots,r$, be  r  independent $n\times n$ random  Hermitian matrices such that, for each $v$,
$ [X^{(v)  }_{ij}]_{i\geq1,j\geq 1}$ is an infinite array of  random variables such that $X^{(v)}_{ii}$,
$X^{(v)}_{ij}, i<j$,  are independent, centred and $\mathbb{E}(\vert X_{ij}\vert^2)=1$. Let  $(x_1,\ldots,x_r)$ be  a semi-circular system in  $({\cal A},  \tau)$ which is free from $(a_1,\ldots a_t)$.
Then, $\left( X^{(1)},\ldots, X^{(r)}, A_n^{(1)},\ldots,A_n^{(t)}\right)$ converges in distribution in $(M_n(\C), \frac{1}{n} \Tr_n )$ towards
$(x_1,\ldots,x_r, a_1,\ldots a_t)$ \end{proposition}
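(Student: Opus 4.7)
My plan is to adapt the truncation and Gaussian-convolution procedure developed earlier in this section, reducing Proposition~\ref{sansmoment} to the bounded-moment setting in which Theorem~5.4.5 of \cite{AGZ} directly applies. For this weaker target (in-distribution convergence only, and without independence of the real and imaginary parts of the entries), one replaces the component-wise truncation in \eqref{ycdef}--\eqref{defxc} by the complex-entry analogue
$$ Y_{ij}^{C} := X_{ij}\mathbf{1}_{|X_{ij}|\leq C} - \mathbb{E}[X_{ij}\mathbf{1}_{|X_{ij}|\leq C}], $$
renormalized so that the resulting entries have variance $1$, and then Gaussian-convolved with an independent G.U.E.\ of relative strength $\delta$ as in \eqref{xcdelta}. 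For any $\epsilon>0$, the resulting matrices $(X_n^{(v)})^{C_\epsilon,\delta_\epsilon}$ have centred entries that are independent for $i\leq j$, of variance $1$, and possess uniformly bounded moments of every order. Theorem~5.4.5 of \cite{AGZ} then yields, almost surely, convergence in distribution of $\left((X_n^{(v)})^{C_\epsilon,\delta_\epsilon}/\sqrt{n}\right)_{v}$ jointly with $(A_n^{(u)})_{u}$ to $(x_1,\ldots,x_r,a_1,\ldots,a_t)$, with $(x_v)$ a free semicircular family free from $(a_u)$.

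It remains to control the substitution error. For a fixed monomial of degree $k$, I telescope the difference $P(X_n/\sqrt{n},A_n)-P((X_n)^{C_\epsilon,\delta_\epsilon}/\sqrt{n},A_n)$ as a sum of products each containing exactly one factor $\Delta^{(v)} := (X_n^{(v)}-(X_n^{(v)})^{C_\epsilon,\delta_\epsilon})/\sqrt{n}$, and bound each such trace by repeated Cauchy--Schwarz in the normalized Hilbert--Schmidt seminorm $\|M\|_{2,n} := (\frac{1}{n}\Tr(M^*M))^{1/2}$. The $\|\cdot\|_{2,n}$-norm of $\Delta^{(v)}$ satisfies
$$ \mathbb{E}\|\Delta^{(v)}\|_{2,n}^{2} = \frac{1}{n^2}\sum_{i,j\leq n}\mathbb{E}\left|X_{ij}^{(v)}-(X_{ij}^{(v)})^{C_\epsilon,\delta_\epsilon}\right|^{2}, $$
which tends to $0$ as $C_\epsilon\to\infty$ and $\delta_\epsilon\to 0$; the remaining Hilbert--Schmidt factors are controlled in expectation by standard second-moment computations using only $\mathbb{E}|X_{ij}^{(v)}|^{2}=1$ and $\sup_n\|A_n^{(u)}\|<\infty$. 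Taking $\epsilon\to 0$ yields convergence of $\mathbb{E}[\frac{1}{n}\Tr P]$ to $\tau(P(x,a))$, and a martingale-difference variance estimate $\mathrm{Var}(\frac{1}{n}\Tr P)=O(1/n)$ obtained from successive conditioning on the rows of each $X_n^{(v)}$ upgrades this to almost-sure convergence via Borel--Cantelli.

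The main obstacle is the uniformity in $n$ of the truncation error $\frac{1}{n^2}\sum_{i,j\leq n}\mathbb{E}|X_{ij}^{(v)}|^{2}\mathbf{1}_{|X_{ij}^{(v)}|>C_\epsilon}$: without a $4$th-moment hypothesis analogous to \eqref{condition}, uniform integrability of $\{|X_{ij}^{(v)}|^2\}_{i,j}$ across $(i,j,n)$ is not automatic from the bare variance-$1$ assumption, and one must instead exploit the averaging over $(i,j)$ together with dominated convergence for each individual entry to make the average small as $C_\epsilon\to\infty$. Once this averaging control is secured, the remainder of the argument runs in parallel with the earlier part of Section~\ref{troncation}.
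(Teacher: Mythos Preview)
Your overall plan --- truncate the complex entries at a fixed level $C$, recenter and renormalize to variance~$1$, Gaussian-convolve with an independent G.U.E., then apply Theorem~5.4.5 of~\cite{AGZ} --- is exactly the paper's. The divergence is only in how the substitution error is controlled. The paper does \emph{not} use Hilbert--Schmidt estimates: it bounds the operator norm $\bigl\|\frac{X_n}{\sqrt n}-\frac{\tilde X_n^{C}}{\sqrt n}\bigr\|$ via the Bai--Yin-type Lemma~\ref{baiyin}, obtaining almost surely $\limsup_n\bigl\|\frac{X_n}{\sqrt n}-\frac{\tilde X_n^{C}}{\sqrt n}\bigr\|\le 2\sqrt{\theta^*}/\sqrt{C}+4\theta^*/C$, and then bounds $\bigl\|P(X)-P(\tilde X^{C,\delta})\bigr\|$ directly as in~\eqref{approxnorme}. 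With operator-norm control in hand, neither the telescoping Cauchy--Schwarz step nor your martingale variance estimate is needed, and the uniform-integrability issue you raise never arises. (Note, however, that the paper's bound is expressed through $\theta^*=\sup_{i<j}\mathbb{E}|X_{ij}|^3$, and Lemma~\ref{baiyin} itself uses a domination hypothesis of the type~\eqref{majquatreZ}; so the paper's short proof is really written under the standing Wigner hypotheses of the introduction rather than the bare second-moment assumption in the statement of Proposition~\ref{sansmoment}.)

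Your Hilbert--Schmidt route has a gap beyond the uniform-integrability concern you flag. In the telescope for $P(X_n/\sqrt n,A_n)-P(\tilde X_n^{C,\delta}/\sqrt n,A_n)$, each summand contains --- besides one factor $\Delta^{(v)}$ --- several factors of the \emph{untruncated} $X_n^{(w)}/\sqrt n$, and ``standard second-moment computations'' do not control Hilbert--Schmidt norms of their products: already $\mathbb{E}\,\|(X_n/\sqrt n)^2\|_{2,n}^{2}=\mathbb{E}\,\tr_n\bigl(X_n/\sqrt n\bigr)^4$ involves the fourth moments $\mathbb{E}|X_{ij}|^4$, which are not assumed finite. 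The same obstruction hits the Efron--Stein/martingale bound $\mathrm{Var}\bigl(\tfrac1n\Tr P\bigr)=O(1/n)$, since the increments are again polynomial traces in the original $X_n$'s. A workable repair is to expand $P(X)-P(\tilde X^{C,\delta})$ fully in powers of $\Delta$ so that every non-$\Delta$ Wigner factor comes from the truncated family $\tilde X^{C,\delta}$ (whose operator norm is almost surely bounded by Bai--Yin for bounded entries), and then estimate via $\|\Delta\|_{2,n}$; but this again routes the argument through an operator-norm statement, which is essentially the paper's approach.
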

\begin{proof}
In the arguments above the present proposition, one  can consider $$\tilde Y_{ij}^C =X_{ij}\1_{\vert X_{ij} \vert \leq C} - \mathbb{E}\left( X_{ij}\1_{\vert X_{ij} \vert \leq C} \right),\mbox{~~and~~}\tilde{X}_{ij}^C =\frac{\tilde Y_{ij}^C}{\sqrt{\mathbb{E} \left( \vert \tilde Y_{ij}^C\vert^2 \right)}},$$
instead of $Y^C$ in \eqref{ycdef} and $X^C$ in \eqref{defxc} and similarly  obtain that almost surely \begin{equation}\limsup_{n\rightarrow+\infty} \left\| \frac{X}{\sqrt{n}}-\frac{\tilde X^C}{\sqrt{n}}\right\|\leq 2\frac{\sqrt{\theta^*}}{\sqrt{C}}+ \frac{{4\theta^*}}{{C}}.\end{equation}
Then,  one can similarly prove that \eqref{approxnorme} holds with 
$$\tilde X^{C,\delta}=\frac{ \tilde X^C +\delta {\cal G}}{\sqrt{1+\delta^2}}$$ instead of $X^{C,\delta}$ in \eqref{xcdelta}, so that the result  follows from Theorem 5.4.5 in \cite{AGZ}.
\end{proof}
\section{Notations }\label{Notations} 
The main part of the following  consists in proving Lemma \ref{inclu2}. For that purpose, we fix some integer number $m$ and Hermitian $ m\times m$ matrices $\{\alpha_v\}_{v=1,\ldots,r}, \{\beta_u\}_{u=1,\ldots,t}, \gamma$.
To begin with, we introduce
some notations on the set of matrices.
\begin{itemize}
\item $M_p(\C)$ is the set of $p\times p$ matrices  with complex entries,  $M_p(\C)_{sa}$ the subset of self-adjoint elements of $M_p(\C)$ and $I_p$ the identity matrix. In the following, we shall consider two sets of matrices with $p=m$ ($m$ fixed) and $p=n$ with $n\vers \infty$.
\item $\Tr_p$ denotes the trace and $\tr_p = \frac{1}{p} \Tr_p$ the normalized trace on $M_p(\C)$.
\item $|| . ||$ denotes the spectral norm on $M_p(\C)$ and $||M||_2 = (\Tr_p (M^*M))^{1/2}$ the Hilbert-Schmidt norm.
\item ${\rm id}_p$ denotes the identity operator from $M_p(\C)$ to $M_p(\C)$.
 \item  Let $(E_{ij})_{i,j =1}^n$ be the canonical basis of $M_n(\C)$ and define a basis of the real vector space of the self-adjoint matrices $M_n(\C)_{sa}$ by:
 \begin{eqnarray*}
 e_{jj} &= &E_{jj}, 1\leq j \leq n \\
 e_{jk}& = &\frac{1}{\sqrt{2}}(E_{jk} + E_{kj}), 1 \leq j < k \leq n \\
 f_{jk} &=& \frac{\sqrt{-1}}{\sqrt{2}}(E_{jk} - E_{kj}) , 1 \leq j < k \leq n
 \end{eqnarray*}
 \item[-]  $(\hat{E}_{ij})_{i,j =1}^m$ is the canonical basis of $M_m(\C)$.
 \item[-] For a matrix $M$ in $M_m(\C) \otimes M_n(\C)$, we set
 $$M_{ij} : = ({\rm id}_m \otimes \Tr_n)(M (1_m \otimes E_{ji})) \in M_m(\C), \ 1\leq i,j \leq n.$$
% and
% $$ _{i, j}\!M := (\Tr_m \otimes{\rm id}_n)(M (\hat{E}_{ji}\otimes 1_n)) \in M_n(\C), \  1 \leq i,j \leq m $$
%  where $(\hat{E}_{ij})$ is the canonical basis of $M_m(\C)$.
\item[-] For any matrix $M$ (or, more generally, any operator $M$),
we define its real part to be $\Re  M=(M+M^*)/2$ and its imaginary part to be $\Im  M=(M-M^*)/2i$.
We write $M>0$ if the matrix $M$ is positive definite and $M\ge0$ if it is nonnegative definite.
In general $M>P$ means that $M-P$ is positive definite.
  \end{itemize}
\noindent We now define   the random variables of interest. 
Let $({\cal A},  \tau)$ be a ${\cal C}^*$-probability space with unit $1_{\cal A}$,
 equipped with a faithful tracial state and 
 $x=(x_1,\ldots,x_r)$ be  a semi-circular system in  $({\cal A},  \tau)$.
 Let $a_n=(a_n^{(1)},\ldots,a_n^{(t)})$ be a t-tuple of noncommutative self-adjoint  random variables which is free from $x$  in $({\cal A},\tau$) and  such that the distribution of  $a_n$ in $({\cal A},\tau)$  coincides with the distribution of $(A_n^{(1)},\ldots, A_n^{(t)})$ in $({ M}_n(\mathbb{C}),\tr_n)$.
\begin{itemize}

 \item[-] 
 We define the random variable $S_n$ with values in $M_m(\C) \otimes M_n(\C)$ by:
 \begin{equation} \label{defSn}
 S_n = \gamma \otimes I_n + \sum_{v=1}^r \alpha_v  \otimes \frac{X^{(v)}_n}{\sqrt{n}}+ \sum_{u=1}^t \beta_u \otimes A_n^{(u)}
 \end{equation}
and $s_n\in M_m(\C) \otimes {\cal A}$ by 
$$s_n=\gamma \otimes 1_{\cal A} + \sum_{v=1}^r\alpha_v \otimes x_v+ \sum_{u=1}^t\beta_u \otimes a_n^{(u)}.$$
 \item[-] For any matrix $\lambda$ in $ M_m(\C) $ such that $ \Im(\lambda)$ is positive definite,
  we define the $M_m(\C)\otimes M_n(\C)$-valued respectively $M_m(\C) \otimes {\cal A}$-valued  random variables:
 \begin{equation}\label{rn}
	R_n(\lambda)=(\lambda \otimes I_n - S_n)^{-1},
	\end{equation}
	$$r_{n}(\lambda)= \left(\lambda \otimes 1_{\cal A} -s_n \right)^{-1},$$
	and 
	the $M_m(\C)$-valued random variables:
 \begin{equation} \label{defSt}
 H_n(\lambda) = ({\rm id}_m \otimes \tr_n) [ (\lambda \otimes I_n - S_n)^{-1}],
 \end{equation}
  \begin{equation} \label{espSt}
G_n(\lambda) =\mathbb{ E}[ H_n(\lambda) ]
\end{equation}
and
\begin{equation}\label{defGntilde}\tilde G_{n}(\lambda)={\rm id}_m \otimes \tau \left(r_n (\lambda ) \right).\end{equation}
Since $\sum_{v=1}^r\alpha_v \otimes x_v$ is an $ M_m(\mathbb C)$-valued semicircular of variance $\eta\colon b\mapsto\sum_{v=1}^r\alpha_vb\alpha_v$ which is free over $ M_m(\mathbb C)$ from $\gamma\otimes 1_{\cal A}+\sum_{u=1}^t\beta_u \otimes a_n^{(u)}$, we know from \cite{ABFN} (see proof of Theorem 8.3) that $\tilde G_{n}$ satisfies (see Section \ref{free} Theorem \ref{resusub} for $p=1$) \begin{equation}\label{subor}\tilde G_{n}(\lambda)= G_{\sum_{u=1}^t\beta_s \otimes a_n^{(u)}}(\omega_n(\lambda))
\end{equation}
where \begin{equation}\label{omegan}
\omega_n(\lambda)=\lambda-\gamma -\sum_{v=1}^r\alpha_v \tilde  G_n(\lambda)\alpha_v
\end{equation} and $$G_{\sum_{u=1}^t\beta_u \otimes a_n^{(u)}}(\lambda)={\rm id}_m \otimes \tau \left(\lambda\otimes 1_{\cal A} -\sum_{u=1}^t\beta_u \otimes a_n^{(u)} \right)^{-1}.$$
\noindent For $z\in \C \setminus  \R$, we also define
 \begin{equation}\label{defpetitg}g_n(z) = \tr_m(G_n(zI_m))\end{equation}and \begin{equation}\label{defpetitgtilde}\tilde g_n(z) = \tr_m(\tilde G_n(z I_m)).\end{equation}

 %%%%%%%

 %%%%%%%%%%%%%%%%%
 
%\item[-]  In the framework of Theorem \ref{thprincipal} where we assume moreover the strong convergence of  $(A_n^{(1)},\ldots,A_n^{(t)})$, we define  %additional random variables.  Let $a=(a_1,\ldots,a_t)$ be a t-tuple of noncommutative self-adjoint  random variables which is free from $x$  in $({\cal A},%%\tau$) and  such that $a$ is the   strong  limit of $(A_n^{(1)},\ldots, A_n^{(t)})$. We define 
% $s \in M_m(\C) \otimes {\cal A}$ by
%  \begin{equation} \label{defSn}
% s = \gamma \otimes 1_{{\cal A}} + \sum_{v=1}^r\alpha_v \otimes x_v +\sum_{u=1}^t \beta_u \otimes a_u
% \end{equation}
% and  for any matrix $\lambda$ in $ M_m(\C) $ such that $ \Im(\lambda)$ is positive definite, we set
% \begin{equation} \label{defStlimite}
% G(\lambda) = (id_m \otimes \tau) [ (\lambda \otimes 1_{{\cal A}} - s)^{-1}].
 %\end{equation}
% Finally, for $z\in \C \setminus  \R$, we  define
%  $$g(z) = \tr_m(G(z I_m)).$$
 
  \end{itemize}
 %%%%%%%%%%%%
 In the sequel, we will say that a random  term in some $M_p(\C)$, depending on $n$, $\lambda \in M_m(\mathbb{C})  $ such that $\Im  \lambda$ is positive definite,  the $\{\alpha_v\}_{v=1,\ldots,r}$, $\{\beta_u\}_{u=1,\ldots,t}$ and  $\gamma$, is  $O\left(\frac{1}{n^k}\right)$ if its  operator  norm  is smaller than
 $\frac{ Q\left(\Vert (\Im \lambda)^{-1} \Vert\right)}{n^k}$ for some deterministic  polynomial $Q$
 whose coefficients are nonnegative real numbers  and can depend on $m$,  $\{\alpha_v\}_{v=1,\ldots,r}$, $\{\beta_u\}_{u=1,\ldots,t}$, $\gamma$.\\
 For a family of random terms $I_{pq}$, $(p,q) \in \{1,\ldots,n\}^2$, we will set $I_{pq}=O_{p,q}^{(u)} \left(\frac{1}{n^k}\right)$ if for each $(p,q)$, $I_{pq}=O \left(\frac{1}{n^k}\right)$ and moreover one can find a bound of 
 %there exists  some deterministic  polynomial $Q$
% whose coefficients are nonnegative real numbers  and can depend on $m$,  $\{\alpha_v\}_{v=1,\ldots,r}$, $\{\beta_u\}_{u=1,\ldots,t}$, $\gamma$  such %that 
the norm of each $I_{p,q}$  as above involving a common polynomial $Q$.\\

Throughout the paper, $K$, $C$ denote some positive constants and  $Q$  denotes some deterministic  polynomial in one variable
 whose coefficients are nonnegative real numbers; they can depend on $m$,   $\{\alpha_v\}_{v=1,\ldots,r}$, $\{\beta_u\}_{u=1,\ldots,t}$ and $\gamma$ and they  may  vary from line to line.

\section{Operator-valued subordination}\label{free}

%%%%%%%%%%%%%%%%%%%%%

In this section we introduce one of the main tools used in describing joint distributions of
random variables that do not necessarily commute. It was a crucial insight of Voiculescu
\cite{V2000, FreeMarkov, V1} that J. L. Taylor's theory of free noncommutative functions \cite{taylor} 
provides the appropriate analogue of the classical Stieltjes transform for encoding
operator-valued distributions \cite{V1995}, and hence joint distributions of $q$-tuples of
noncommuting random variables. We shall only present below the case of relevance to our
present work, and refer the reader to \cite{V1995,V2000,V1} for the general case and for the
proofs of the main results.

Given a tracial $\mathcal C^*$-probability space $(\mathcal A,\tau)$ and a trace and order preserving
unital $\mathcal C^*$ inclusion $M_m(\mathbb C)\subseteq\mathcal A$ (i.e. such that $I_m\in 
M_m(\mathbb C)$ identifies with the unit of $\mathcal A$ and ${\rm tr}_m(b)=\tau(b)$ for all $b\in 
M_m(\mathbb C)$), there exists a conditional expectation $E\colon\mathcal A\to M_m(\mathbb C)$, i.e. 
a linear map sending the unit to itself and such that $E(b_1yb_2)=b_1E(y)b_2$ for all $b_1,b_2\in M_m(\mathbb C),y
\in\mathcal A$ - see \cite[Section II.6.10.13]{Bruce}. We will only be concerned with the trivial case
of the canonical inclusion $M_m(\mathbb C)\subseteq M_m(\mathbb C)\otimes\mathcal A$ 
given by $b\mapsto b\otimes 1_{\cal A}$, when the conditional expectation is the partial trace:
$E(b\otimes y)=({\rm id}_m\otimes\tau)(b\otimes y)=\tau(y)b$.
The $M_m(\mathbb C)$-valued distribution of an element $y\in\mathcal A$ with respect to $E$ is by 
definition the family of multilinear maps 
$\mu_y=
\{\Psi_q\colon\underbrace{M_m(\mathbb C)\times\cdots\times M_m(\mathbb C)}_{q-1\text{ times}}
\to M_m(\mathbb C)\colon \Psi_q(b_1,\dots,b_{q-1})=E[yb_1y\cdots b_{q-1}y],q\in\mathbb N\}$. 
By convention, $\Psi_0=1\in M_m(\mathbb C)$, $\Psi_1=E[y]$.

For a given $y=y^*\in\mathcal A$ with distribution $\mu_y$, define its noncommutative 
Stieltjes transform to be the {\em countable family} of maps
$G_{\mu_y,p}(b)=(E\otimes{\rm id}_{p})\left[(b-y\otimes I_p)^{-1}\right], p\in \mathbb{N}\setminus\{0\}.$
Thus, $G_{\mu_y,1}(b)=E\left[(b-y)^{-1}\right],b\in M_m(\mathbb C)$,
$G_{\mu_y,2}(b)=(E\otimes{\rm id}_{2})\left[\left(\begin{bmatrix}
b_{11} & b_{12}\\
b_{21} & b_{22}
\end{bmatrix}-\begin{bmatrix}
y & 0\\
0 & y
\end{bmatrix}\right)^{-1}\right],$ $b_{11}, b_{12},
b_{21}, b_{22}\in M_m(\mathbb C)$ etc. These maps are clearly analytic on the open sets $\{b\in 
M_m(\mathbb C)\otimes M_p(\mathbb C)\colon b-y\otimes I_p\text{ invertible in }\mathcal A\otimes M_p(\mathbb C)\}$. Two 
such sets will be important in this paper: the noncommutative upper half-plane $H^+_p(M_m(\mathbb 
C))=\{b\in M_m(\mathbb C)\otimes M_p(\mathbb C)\colon \Im  b:=(b-b^*)/2i>0\}$, $p\in \mathbb{N}\setminus\{0\}$, 
and the ``ball around infinity'' $\{b\in M_m(\mathbb C)\otimes M_p(\mathbb C) \colon b\text{ invertible, 
}\|b^{-1}\|<\|y\|^{-1}\}$ (actually only for $p=1$). The maps $b\mapsto G_{\mu_y,p}(b^{-1})$ have thus an analytic extension 
around zero, which maps zero to zero and has the identity as first (Frechet) derivative. While 
$G_{\mu_y,p}$ does not map these ``balls around infinity'' into themselves, it does map 
$H^+_p(M_m(\mathbb C))$ into $-H^+_p(M_m(\mathbb C))$, and moreover $G_{\mu_y,p}(b)^{-1}$ maps 
$H^+_p(M_m(\mathbb C))$ into itself (see \cite[Section 3.6]{V2000}). (In addition, the family of maps 
$\{G_{\mu_y,p}\}_{p\in \mathbb{N}\setminus\{0\}}$ satisfy certain compatibility conditions that make them into free 
noncommutative maps - see \cite{KVV}. It is known \cite{W} that there is a bijection between such families 
of maps $G$ that send for any $p \in \mathbb{N}\setminus\{0\}$, $H^+_p(M_m(\mathbb C))$ into $-H^+_p(M_m(\mathbb C))$ and have the 
above-described behavior on ``balls around infinity'' and $M_m(\mathbb C)$-valued distributions of 
self-adjoint elements; however, since we will not make use of this correspondence, we chose to only 
mention it in order to illustrate the parallel to the case of classical Stieltjes transforms, and direct the 
interested reader to \cite{W} for details.)

As in scalar-valued free probability, one defines \cite{V1995} {\em freeness with amalgamation}
over $M_m(\mathbb C)$ via an algebraic relation similar to \eqref{freeness}, but involving $E$
instead of $\tau$ and noncommutative polynomials with coefficients in $M_m(\mathbb C)$.
Since it is not important for us here, we refer the interested reader to \cite{V1995} for 
more details. The essential result of Voiculescu that we will need in this paper is the following
analytic subordination result:
\begin{theoreme}\label{resusub}
With the above notations, assume that $y_1=y_1^*,y_2=y_2^*\in\mathcal A$ are free with amalgamation
over $M_m(\mathbb C)$. For any $p\in\mathbb N$ there exist analytic maps $\omega_{1,p},
\omega_{2,p}\colon H^+_p(M_m(\mathbb C))\to H^+_p(M_m(\mathbb C))$ such that:
\begin{enumerate}
\item For all $b\in H^+_p(M_m(\mathbb C)),$ $\Im \omega_{j,n}(b)\ge\Im  b$, $j=1,2$;
\item For all $b\in H^+_p(M_m(\mathbb C))$
$$G_{\mu_{y_1+y_2},p}(b)=G_{\mu_{y_1},p}(\omega_{1,p}(b))=G_{\mu_{y_2},p}(\omega_{2,p}(b))
=\left[\omega_{1,p}(b)+\omega_{2,p}(b)-b\right]^{-1}$$ 
\item $\omega_{1,p},
\omega_{2,p}$ are noncommutative maps in the sense of $\cite{taylor}$ $($see $\cite{KVV})$.
\end{enumerate}
\end{theoreme}
The result as phrased here is a combination of parts of \cite[Theorem 3.8]{V2000} and
\cite[Theorem 2.7]{BMS}. We shall use this theorem in the particular case when $y_2=s$ is a centred 
{\em operator-valued semicircular} random variable (and actually only for $p=1$). As for the scalar-valued semicircular
centred random variables, it is  uniquely determined by its variance $\eta\colon b\mapsto E(sbs)$,
which is a completely positive self-map of $M_m(\mathbb C)$. A characterization in terms of moments
and cumulants via $\eta$ is provided by Speicher in \cite{SMem}. Given the context of our paper,
we find it more useful to provide a characterization in terms of the noncommutative 
Stieltjes transform \cite{HRS}: the functions $G_{\mu_s,p}$ are  the unique solutions mapping
$H^+_p(M_m(\mathbb C))$ into $-H^+_p(M_m(\mathbb C))$ of the functional equations
$$
G_p(b)^{-1}=b-(\eta\otimes{\rm id}_p)(G_p(b)),\quad b\in H^+_p(M_m(\mathbb C)),
p\in\mathbb N.
$$
Starting from this equation, it can be shown (see \cite{ABFN}) that the subordination function associated 
to a semicircular operator-valued random variable is particularly nice: if $y_1$ and $s$ are free with
amalgamation over $M_m(\mathbb C)$, then 
\begin{equation}
\omega_{1,p}(b)=b-(\eta\otimes{\rm id}_p)(G_{\mu_{y_1},p}(\omega_{1,p}(b)))=b-
(\eta\otimes{\rm id}_p)(G_{\mu_{y_1+s},p}(b)),
\end{equation}
for $b\in H^+_p(M_m(\mathbb C)),p\in\mathbb N,$ or, equivalently,
\begin{equation}
G_{\mu_{y_1},p}(b-(\eta\otimes{\rm id}_p)(G_{\mu_{y_1+s},p}(\omega_{1,p}(b)))=G_{\mu_{y_1+s},p}(b),
\quad b\in H^+_p(M_m(\mathbb C)),p\in\mathbb N.
\end{equation}
This indicates that the $\omega_{1,p}$'s are injective maps on $H^+_p(M_m(\mathbb C)),p\in\mathbb N.
$ Their left inverses are defined as
\begin{equation}\label{tauto}
{\Lambda}_{1,p}(w)=w+(\eta\otimes{\rm id}_p)(G_{\mu_{y_1},p}(w)),
\quad w\in H^+_p(M_m(\mathbb C)),p\in\mathbb N.
\end{equation}

Let us  explain how all this relates to the joint distributions of free
random variables. It turns out (see, for example, \cite{NSS}, but it can be easily verified directly)
that if $\{x_1=x_1^*,\dots,x_r=x_r^*\}, \{y_1=y_1^*,\dots,y_t=y_t^*\}\subset\mathcal A$ are free over 
$\mathbb C$ and for $v=1,\ldots,r$, $\alpha_v=\alpha_v^*$, for $u=1,\ldots,t$, $\beta_u=\beta_u^*\in M_m(\mathbb C)$,
then $\{\alpha_1\otimes x_1,\dots,\alpha_r\otimes x_r\}$ and $\{\beta_1\otimes y_1,\dots,\beta_t\otimes y_t\}$ are free 
with amalgamation over $M_m(\mathbb C)$. Thus, they can be treated with the tools described above.
Moreover, if $x_1,\dots,x_r$ are free $\mathbb C$-valued semicircular centred random variables 
of variance one and $\alpha_1,\dots,\alpha_r$ are self-adjoint $m\times m$ complex matrices, then
$\alpha_1\otimes x_1+\cdots+\alpha_r\otimes x_r$ is a centred $M_m(\mathbb C)$-valued semicircular of 
variance $b\mapsto\sum_{j=1}^r \alpha_jb\alpha_j$. These simple facts together with a linearization trick
(see Section \ref{linearisation} and  Step 1 of Section \ref{strategie}) will allow us in principle to treat, from the point of view of
the Stieltjes transform, an $r$-tuple of Wigner matrices and deterministic  matrices as we would treat a 
single Wigner matrix together with a single deterministic  matrix. \\
~~

\noindent Let us conclude this section with the following  invertibility property of   matricial subordination maps related to semi-circular system that will fundamental in our approach.

\begin{lemme}\label{inversion}
Using the notations of Section \ref{Notations}, define  for any $\rho$ in $M_m(\mathbb{C})$ such that $\Im  \rho>0$,
\begin{equation}\Lambda_n(\rho)= \gamma +\rho + \sum_{v=1}^r \alpha_v G_{\sum_{u=1}^t \beta_u \otimes a_n^{(u)}}(\rho) \alpha_v.\end{equation}
With $\omega_n$  defined in \eqref{omegan},
when $\Im \rho>0$ and $\Im \Lambda_n(\rho)>0$ we have 
$$\omega_n(\Lambda_n(\rho))=\rho.$$
\end{lemme}
\begin{proof}
The equality $\Lambda_n(\omega_n(\rho))=\rho$ holds tautologically for all $\rho$ with $\Im  \rho>0$ (see \eqref{tauto}).
Let us first show that the equality $\omega_n(\Lambda_n(\rho))=\rho$ holds when $\rho$ has 
a small enough inverse. 
The map $\Lambda_n$ has a power series expansion
$$
\Lambda_n(\rho)=\rho+\gamma+\sum_{v=1}^r\alpha_v\left(
\sum_{k=0}^\infty({\rm id}_m\otimes\tau)
\left(\rho^{-1}\left[\sum_{u=1}^t(\beta_u\otimes a_n^{(u)})\rho^{-1}\right]^k\right)\right)\alpha_v,
$$
convergent when $\|\rho^{-1}\|<\left\|\sum_{u=1}^t\beta_u\otimes a_n^{(u)}\right\|^{-1}$. For simplicity
we let $h(\lambda)=\sum_{v=1}^r\alpha_v\left(
\sum_{k=0}^\infty({\rm id}_m\otimes\tau)
\left(\lambda\left[\sum_{u=1}^t(\beta_u\otimes a_n^{(u)})\lambda\right]^k\right)\right)\alpha_v,$ norm
convergent on a ball of radius $\left\|\sum_{u=1}^t\beta_u\otimes a_n^{(u)}\right\|^{-1}$ and fixing zero. 
Performing the change of variable $\lambda=\rho^{-1}$, we obtain $\Lambda_n(\rho)=
\Lambda_n(\lambda^{-1})=\lambda^{-1}+\gamma+h(\lambda)$. Then
$(\Lambda_n(\lambda^{-1}))^{-1}=(\lambda^{-1}+\gamma+h(\lambda))^{-1}=
\lambda(1+(\gamma+h(\lambda))\lambda)^{-1}$, which is analytic on the set of all $\lambda\in
 M_m(\mathbb C)$ such that $\|\lambda\|<\left\|\sum_{u=1}^t\beta_u\otimes a_n^{(u)}\right\|^{-1}$
and $\|\gamma+h(\lambda)\|<\|\lambda\|^{-1}.$ 

Define ${\check\Lambda_n}(\rho)=(\Lambda_n(\rho^{-1}))^{-1}$ and ${\check\omega_n}
(\rho)=(\omega_n(\rho^{-1}))^{-1}$. We have established above that ${\check\Lambda_n}$ is analytic
on a neighbourhood of zero, and a direct computation shows that ${\check\Lambda_n}(0)=0,
{\check\Lambda_n}'(0)={\rm id}$. The inverse function theorem for analytic maps allows us to
conclude that there exists a neighbourhood of zero on which ${\check\Lambda_n}$ has a unique inverse
which fixes zero and whose derivative at zero is equal to the identity. 
The map ${\check\omega_n}$ is shown precisely the same way to satisfy the same properties as
${\check\Lambda_n}$. In particular, for $\|\rho\|$ small enough, 
${\check\Lambda_n}({\check\omega_n}(\rho))=(\Lambda_n({\check\omega_n}(\rho)^{-1}))^{-1}=
(\Lambda_n(((\omega_n(\rho^{-1}))^{-1})^{-1}))^{-1}=(\Lambda_n(\omega_n(\rho^{-1}))^{-1}=
(\rho^{-1})^{-1}=\rho$ for any $\rho$ with strictly positive imaginary part. Since zero is in the
closure of $\{\rho\in M_m(\mathbb C)\colon\Im  \rho>0\}$, it follows that ${\check\omega_n}$
and ${\check\Lambda_n}$ are compositional inverses to each other on a small enough neighbourhood of
zero. We conclude that for all $\rho$ such that the lower bound of the spectrum of $\Im  \rho$ is 
sufficiently large, $\omega_n(\Lambda_n(\rho))=\rho$.

Let now $\rho$ be fixed in $ M_m(\mathbb{C})$ such that $\Im \rho>0$ and $\Im \Lambda_n(\rho)>0$.
Let $\phi$ be  a positive linear functional on $ M_m(\mathbb{C})$ such that
$\phi(1)=1$ (i.e. a state). Define $\varphi_\rho(\cdot )=\phi(\cdot )/\phi(\Im \rho)$. It is linear and 
positive (well defined because $\Im  \rho\ge\frac{1}{\|(\Im  \rho)^{-1}\|}1$, so that $\phi(\Im  \rho)\ge
\frac{1}{\| (\Im  \rho)^{-1}\|}>0$). Define 
$$
f_\rho(z)=\varphi_\rho\left(\Lambda_n(\Re \rho+z\Im \rho)\right),\quad z\in\mathbb C^+.
$$
Note that $$f_\rho(z)=z+ \varphi_\rho(\gamma+\Re \rho))+F(z)$$
where $$F(z)=\frac{\phi\left[ \sum_{v=1}^r \alpha_v {\rm id}_m\otimes \tau \left\{\left( (\Re  \rho +z \Im  \rho)\otimes 1_{\cal A} -\sum_{u=1}^t \beta_u \otimes a_n^{(u)} \right)^{-1}\right\}\alpha_v\right] }{\phi(\Im \rho)}.$$
$F(z)$ is analytic on $\mathbb{C}\setminus \mathbb{R}$ and satisfies $\overline{F(z)}=F(\bar{z}).$
Let $z\in \mathbb{C}^+$.
We have \\

\noindent $\Im  F(z)$ $$= \frac{\phi\left[ \sum_{v=1}^r \alpha_v{\rm id}_m\otimes \tau \left\{ \Im  \left\{\left( (\Re  \rho +z \Im  \rho)\otimes 1_{\cal A} -\sum_{u=1}^t \beta_u \otimes a_n^{(u)} \right)^{-1}\right\} \right\}\alpha_v\right] }{\phi(\Im \rho)}$$
where \\

\noindent $\Im  \left\{\left( (\Re  \rho +z \Im  \rho)\otimes 1_{\cal A} -\sum_{u=1}^t \beta_u \otimes a_n^{(u)} \right)^{-1}\right\}$ \begin{eqnarray*}&=&
-\Im  z \left(( \Re  \rho +z \Im  \rho)\otimes 1_{\cal A} -\sum_{u=1}^t \beta_u \otimes a_n^{(u)} \right)^{-1} (\Im  \rho \otimes 1_{\cal A})\\&&~~~~~~~~~~~~~~~~~~~~~~~~~~~\times \left( (\Re  \rho +\bar{z} \Im  \rho)\otimes 1_{\cal A} -\sum_{u=1}^t \beta_u \otimes a_n^{(u)} \right)^{-1}<0.
\end{eqnarray*}
It follows by the complete positivity of the trace $\tau$ that $${\rm id}_m\otimes \tau\left\{ \Im  \left\{\left(( \Re  \rho +z \Im  \rho)\otimes 1_{\cal A} -\sum_{u=1}^t \beta_u \otimes a_n^{(u)} \right)^{-1}\right\}\right\}<0.$$
Now, according to Remark \ref{remarqueinversible}, we can assume the $\alpha_v$'s invertible so that 
$\sum_{v=1}^r \alpha_v{\rm id}_m\otimes \tau \left\{\Im  \left\{\left( (\Re  \rho +z \Im  \rho) \otimes 1_{\cal A} -\sum_{u=1}^t \beta_u \otimes a_n^{(u)} \right)^{-1}\right\}\right\}\alpha_v <0$ and then $\Im  F(z) <0$.
Thus for any $z\in \mathbb{C}\setminus \mathbb{R}$, we have $\Im z \Im  F(z)<0.$
Finally 
$$\lim_{y \rightarrow +\infty} iy F(iy)= \varphi_\rho \left( \sum_{v=1}^r\alpha_v (\Im  \rho )^{-1} \alpha_v \right):=c_\rho >0. $$
Thus, by Akhiezer-Krein's Theorem (\cite{AK} page 93), there exists a probability measure $\mu$ on $\mathbb {R}$ such that
$$
F(z)= c_\rho \int_\mathbb R\frac{d\mu(s)}{z-s}
,\quad z\in\mathbb C^+.
$$
Then
$$
f_{\rho}(z)=z+\varphi_\rho(\gamma+\Re \rho)+c_\rho \int_\mathbb R\frac{d\mu(s)}{z-s}
,\quad z\in\mathbb C^+.
$$
Thus $\Im f_{\rho}(u+iv)=v\left(1-c_\rho\int_\mathbb R\frac{d\mu(s)}{(u-s)^2+v^2}\right)$. We observe 
that what's under parenthesis is strictly increasing in $v$. Since by hypothesis, we have $\Im  \Lambda_n(\rho)>0$ and thus  $\Im f_{\rho}(i)=\Im \varphi_\rho\left(\Lambda_n(\Re \rho+i\Im \rho)\right)
>0$, we obtain 
immediately that $\Im f_{\rho}(iv)>0$ for all $v\ge1$. This means that $\Im 
\phi(\Lambda_n(\Re \rho+iv\Im \rho))>0$ for all $v\in[1,+\infty)$ and all states $\phi$, so that \begin{equation}\label{ray}\Im \Lambda_n (\Re \rho+iv\Im \rho)
>0, {\rm~ for~ all~} v\ge1.\end{equation}
Now it is clear that $$\Omega=\{z \in \C^+, \Im \Lambda_n(\Re  \rho + z\Im  \rho) >0\}$$
is an open set which contains $d=\{iv, v\geq 1\}$.
Let $\Omega_d$ be the connected component of $\Omega$ which contains $d$. Note that $\Omega_d $ is an open set.

As we have shown at the beginning of our proof, for given $\rho,\Im   \rho>0$, there exists an $M>0$ (possibly depending on $\rho$) such that $\omega_n(\Lambda_n(\Re  \rho + iv\Im  \rho))=\Re  \rho + iv\Im  \rho$ for all $v>M.$
By the identity principle for analytic functions, we immediately obtain that 
$ \omega_n(\Lambda_n(\Re \rho +z \Im \rho))=\Re  \rho +z \Im  \rho$ for all $z\in \Omega_d$ and in particular for $z=i$. The 
proof of Lemma \ref{inversion} is complete.
\end{proof}

%%%%%%%%%%%%%%%%%%%
\section{Proof of Lemma \ref{inclu2}}\label{lemmefonda} \subsection{ Sharp estimates of Stieltjes transforms} The proof of \eqref{spectre3} requires the  sharp estimate \eqref{estimdiffeqno} we are going to prove here.

\noindent According to Section \ref{troncation}, from now on, we assume that the $X_{ij}^{(v)}$'s satisfy (H).
Note that this assumption  implies that for any  $v\in \{1,\ldots,r\}$, 
$$\forall i\geq 1, \forall j \geq  1, \;\kappa_1^{i,j,v}=0, \;\kappa_2^{i,j,v}=1,$$
 $$\forall i\geq 1, \forall j \geq  1,\;,  i\neq j, \; \tilde \kappa_1^{i,j,v}=0,\; \tilde \kappa_2^{i,j,v}=1$$ and  
 for any $p\in \mathbb{N}\setminus\{0\}$, \begin{equation}\label{cumulants}\max_{v=1,\ldots,r} \sup_{i\geq 1, j \geq 1} \vert \kappa_p^{i,j,v}\vert<+\infty, \; \max_{v=1,\ldots,r} \sup_{i\geq 1, j\geq 1} \vert \tilde  \kappa_p^{i,j,v}\vert<+\infty, \end{equation}
where for $i\neq j$,  $(\kappa_p^{i,j,v})_{p\geq1}$ and $(\tilde \kappa_p^{i,j,v})_{p\geq 1}$ denote  the classical cumulants of $\sqrt{2}\Re  X_{ij}^{(v)}$ and $\sqrt{2}\Im   X_{ij}^{(v)}$ respectively and $(\kappa_p^{i,i,v})_{p\geq 1}$ denotes  the classical cumulants of $ X_{ii}^{(v)}$ (we set  $(\tilde \kappa_p^{i,i,v})_{p\geq 1}\equiv 0$).\\

\noindent Now, we present our  main technical tool (see \cite{KKP}):
 \begin{lemme} \label{lem1}
Let $\xi$ be a real-valued random variable such that  $\mathbb{E}(\vert \xi
\vert^{p+2})<\infty$. Let  $\phi$ be a function from  $\R$ to $\C$
such that the first $p+1$ derivatives are  continuous and bounded. Then,
\begin{equation}\label{IPP}\mathbb E(\xi \phi(\xi)) = \sum_{a=0}^p
\frac{\kappa_{a+1}}{a!}\mathbb{E}(\phi^{(a)}(\xi)) + \epsilon\end{equation}
where  $\kappa_{a}$ are the classical  cumulants of  $\xi$, $\epsilon \leq C
\sup_t \vert \phi^{(p+1)}(t)\vert \mathbb{E}(\vert \xi \vert^{p+2})$, $C$
depends  on $p$ only.
\end{lemme}
In the following, we shall apply this identity with a function
$\phi(\xi)$ given by the entries  of the resolvent of $S_n$. It
follows from  Lemma \ref{lem2} and (\ref{resolvente}) below
that the conditions of Lemma \ref{lem1} (bounded derivatives) are
fulfilled.
We first need the following preliminary lemma.
 
\begin{lemme}\label{inverseY}
For any $\lambda \in M_m(\mathbb{C})  $ such that $\Im  \lambda$ is positive definite, $(\lambda -\gamma -\sum_{v=1}^r\alpha_v G_n(\lambda)\alpha_v)\otimes I_n -  \sum_{u=1}^t \beta_u \otimes A_n^{(u)}$ and $(\lambda -\gamma -\sum_{v=1}^r\alpha_v \tilde G_n(\lambda)\alpha_v)\otimes I_n -  \sum_{u=1}^t \beta_u \otimes A_n^{(u)}$ are invertible.
Set  \begin{equation}\label{y}Y_n(\lambda)=\left((\lambda -\gamma -\sum_{v=1}^r\alpha_v G_n(\lambda)\alpha_v)\otimes I_n -  \sum_{u=1}^t \beta_u \otimes A_n^{(u)} \right)^{-1}\end{equation}
and  \begin{equation}\label{ytilde}\tilde Y_n(\lambda)=\left((\lambda -\gamma -\sum_{v=1}^r\alpha_v \tilde G_n(\lambda)\alpha_v)\otimes I_n -  \sum_{u=1}^t \beta_u \otimes A_n^{(u)} \right)^{-1}.\end{equation}
We have \begin{equation} \label{Y}\left\| Y_n(\lambda) \right\| \leq \Vert ( \Im  \lambda)^{-1} \Vert\end{equation}
and \begin{equation} \label{Y2}\left\| \tilde Y_n(\lambda) \right\| \leq \Vert ( \Im  \lambda)^{-1} \Vert.\end{equation}

\end{lemme}
\begin{proof}
We only present the proof for $Y_n(\lambda)$ since the proof is similar for $\tilde Y_n(\lambda)$.
Note that 
\begin{eqnarray*}
\Im  \left[ \left( \lambda\otimes I_n -S_n\right)^{-1}\right]
&=& \frac{1}{2i}\left[\left( \lambda\otimes I_n -S_n\right)^{-1}-\left( \lambda^*\otimes I_n -S_n\right)^{-1}\right]\\
&=& -\left( \lambda\otimes I_n -S_n\right)^{-1}\left( \Im  \lambda \otimes I_n\right) \left( \lambda^*\otimes I_n -S_n\right)^{-1}.
\end{eqnarray*}
This yields that  $-\Im  R_n(\lambda) $ is positive definite. Since the map ${\rm id }_m\otimes \tr_n$ is positive we can deduce that $-\Im  H_n(\lambda)$ is positive and then that $-\Im  G_n(\lambda)$ is positive. It readily follows that
\begin{equation}\label{image} \Im  \left[\lambda -\gamma -\sum_{v=1}^r\alpha_v G_n(\lambda)\alpha_v \right] \geq \Im  \lambda\end{equation} and then
$$\Im  \left[\left(\lambda -\gamma -\sum_{v=1}^r\alpha_v G_n(\lambda)\alpha_v\right)\otimes I_n -  \sum_{u=1}^t \beta_u \otimes A_n^{(u)}\right] \geq \Im  \lambda\otimes I_n.$$ Hence Lemma \ref{inverseY} follows  by lemma 3.1 in \cite{HT}.
\end{proof}
\begin{theoreme}\label{resolvante}
For any $\lambda \in M_m(\mathbb{C})  $ such that $\Im \lambda$ is positive definite, we have 
\begin{equation}\label{mast}\mathbb{E} \left(R_n(\lambda)\right)=Y_n(\lambda)+Y_n(\lambda)\Xi(\lambda)\end{equation}
where $Y_n(\lambda)$ is defined in Lemma \ref{inverseY}
\noindent and  $\Xi(\lambda)=\sum_{l,j}\Xi_{lj}(\lambda)\otimes E_{lj}$ satisfies that   for all $l,j\in \{1, \ldots,n\}$, $$ \Xi_{lj}(\lambda)=\Psi_{lj}(\lambda) +O_{lj}^{(u)}( \frac{1}{n^2})$$
where
 \begin{eqnarray}\Psi_{lj}(\lambda)& =&\sum_{v=1}^r\bigg\{\mathbb{E}\left[ \alpha_v [H_n(\lambda) -\mathbb{E}(H_n(\lambda))] \alpha_v    [R_n(\lambda ) ]_{lj} \right]\nonumber
\\&&+ \frac{1}{2\sqrt{2}n\sqrt{n}}  \sum_{i=1}^n  \left\{1-\delta_{il}\left(1-\frac{1}{\sqrt{2}}\right)\right\}M^{(3)}(v,i,l,j)\nonumber\\&&+  \frac{1}{4n^2} \sum_{i=1}^n \left(1-\frac{1}{2}\delta_{il}\right) M^{(4)}(v,i,l,j)\nonumber\\&&\left.
+  \frac{1}{4\sqrt{2}n^2\sqrt{n}}\sum_{i=1}^n  \left[1-\delta_{il}\left(1-\frac{1}{2\sqrt{2}}\right)\right]M^{(5)}(v,i,l,j)\right\},\label{psi}\end{eqnarray}
with \\

\noindent $M^{(3)}(v,i,l,j)$
\begin{eqnarray}=& \mathbb{E} \{(\kappa_3^{i,l,v}+\tilde \kappa_3^{i,l,v}\sqrt{-1})\alpha_v(R_n(\lambda))_{ii}\alpha_v(R_n(\lambda))_{li}\alpha_v 
 (R_n(\lambda))_{lj}\nonumber \\
 &+(\kappa_3^{i,l,v}-\tilde \kappa_3^{i,l,v}\sqrt{-1})\alpha_v (R_n(\lambda))_{ii}\alpha_v (R_n(\lambda))_{ll} \alpha_v  (R_n(\lambda))_{ij}\label{except}
  \\
&+(\kappa_3^{i,l,v}-\tilde \kappa_3^{i,l,v}\sqrt{-1})\alpha_v (R_n(\lambda))_{il}\alpha_v (R_n(\lambda))_{ii} \alpha_v  (R_n(\lambda))_{lj} \nonumber \\
&+(\kappa_3^{i,l,v}+\tilde \kappa_3^{i,l,v}\sqrt{-1})\alpha_v (R_n(\lambda))_{il}\alpha_v (R_n(\lambda))_{il} \alpha_v  (R_n(\lambda))_{ij}\}, \nonumber \end{eqnarray}

\noindent $M^{(4)}(v,i,l,j)$
\begin{eqnarray}=&(\kappa_4^{i,l,v}+\tilde \kappa_4^{i,l,v}) \mathbb{E} \{\alpha_v(R_n(\lambda))_{il}\alpha_v(R_n(\lambda))_{il}\alpha_v 
(R_n(\lambda))_{il} \alpha_v (R_n(\lambda))_{ij}\label{premierk4}
\\&+\alpha_v (R_n(\lambda))_{il}\alpha_v (R_n(\lambda))_{ii} \alpha_v (R_n(\lambda))_{li}\alpha_v (R_n(\lambda))_{lj}\label{deuxcas4} \\&+\alpha_v (R_n(\lambda))_{ii}\alpha_v (R_n(\lambda))_{ll} \alpha_v(R_n(\lambda))_{ii}\alpha_v (R_n(\lambda))_{lj}\label{troiscas4}\\& + \alpha_v (R_n(\lambda))_{ii} \alpha_v(R_n(\lambda))_{li}\alpha_v (R_n(\lambda))_{ll}\alpha_v (R_n(\lambda))_{ij}\} \label{quatrecas4}\\
&\hspace*{-0.4cm}+(\kappa_4^{i,l,v}~\hspace*{-0.4cm}-\tilde \kappa_4^{i,l,v}) \mathbb{E} \{\alpha_v(R_n(\lambda))_{ii}\alpha_v(R_n(\lambda))_{li}\alpha_v 
(R_n(\lambda))_{li} \alpha_v (R_n(\lambda))_{lj} \label{tilde1}
\\&+\alpha_v (R_n(\lambda))_{il}\alpha_v (R_n(\lambda))_{ii} \alpha_v (R_n(\lambda))_{ll}\alpha_v (R_n(\lambda))_{ij} \label{tilde2} \\&+\alpha_v (R_n(\lambda))_{il}\alpha_v (R_n(\lambda))_{il} \alpha_v(R_n(\lambda))_{ii}\alpha_v (R_n(\lambda))_{lj} \label{tilde3}\\& + \alpha_v (R_n(\lambda))_{ii} \alpha_v(R_n(\lambda))_{ll}\alpha_v (R_n(\lambda))_{il}\alpha_v (R_n(\lambda))_{ij}\}, \label{tilde4}
\end{eqnarray}

\noindent $M^{(5)}(v,i,l,j)$
\begin{eqnarray*}=& \mathbb{E} \{(\kappa_5^{i,l,v}+\tilde\kappa_5^{i,l,v}\sqrt{-1}) ~~~~~~~~~~~~~~~~~~~~~~~~~~~~~~~~~~~~~~~~~~~~~~~~~~~~~~~~~~~~~~~~~~~~~~
\\& \times \left[\alpha_v(R_n(\lambda))_{ii}\alpha_v(R_n(\lambda))_{li}\alpha_v 
 (R_n(\lambda))_{li}\alpha_v (R_n(\lambda))_{ll} \alpha_v  (R_n(\lambda))_{ij}\right.\\
 &+\alpha_v (R_n(\lambda))_{ii}\alpha_v (R_n(\lambda))_{li} \alpha_v  (R_n(\lambda))_{ll} \alpha_v (R_n(\lambda))_{ii} \alpha_v (R_n(\lambda))_{lj} \\
&+\alpha_v (R_n(\lambda))_{ii}\alpha_v (R_n(\lambda))_{ll} \alpha_v  (R_n(\lambda))_{ii}\alpha_v(R_n(\lambda))_{li}\alpha_v 
 (R_n(\lambda))_{lj} \\
&+\alpha_v (R_n(\lambda))_{ii}\alpha_v (R_n(\lambda))_{ll} \alpha_v  (R_n(\lambda))_{il} \alpha_v (R_n(\lambda))_{il} \alpha_v (R_n(\lambda))_{ij} \\&+\alpha_v (R_n(\lambda))_{il}\alpha_v (R_n(\lambda))_{ii} \alpha_v  (R_n(\lambda))_{li}  \alpha_v  (R_n(\lambda))_{li}\alpha_v  (R_n(\lambda))_{lj} \\
&+  \alpha_v  (R_n(\lambda))_{il}\alpha_v (R_n(\lambda))_{ii}\alpha_v (R_n(\lambda))_{ll} \alpha_v  (R_n(\lambda))_{il} \alpha_v  (R_n(\lambda))_{ij} \\
&+ \alpha_v  (R_n(\lambda))_{il}\alpha_v (R_n(\lambda))_{il}\alpha_v (R_n(\lambda))_{ii} \alpha_v  (R_n(\lambda))_{ll} \alpha_v  (R_n(\lambda))_{ij} \\
&+\left. \alpha_v  (R_n(\lambda))_{il}\alpha_v (R_n(\lambda))_{il}\alpha_v (R_n(\lambda))_{il} \alpha_v  (R_n(\lambda))_{ii} \alpha_v  (R_n(\lambda))_{lj} \right]\\
&+(\kappa_5^{i,l,v}-\tilde\kappa_5^{i,l,v}\sqrt{-1}) ~~~~~~~~~~~~~~~~~~~~~~~~~~~~~~~~~~~~~~~~~~~~~~~~~~~~~~~~~~~~~~~~~~~~~~\\&\left[
\alpha_v(R_n(\lambda))_{ii}\alpha_v(R_n(\lambda))_{li}\alpha_v 
 (R_n(\lambda))_{li}\alpha_v (R_n(\lambda))_{li} \alpha_v  (R_n(\lambda))_{lj}\right.\\
&+\alpha_v (R_n(\lambda))_{ii}\alpha_v (R_n(\lambda))_{li} \alpha_v  (R_n(\lambda))_{ll} \alpha_v (R_n(\lambda))_{il} \alpha_v  (R_n(\lambda))_{ij} \\
&+\alpha_v (R_n(\lambda))_{ii}\alpha_v (R_n(\lambda))_{ll} \alpha_v  (R_n(\lambda))_{ii}\alpha_v(R_n(\lambda))_{ll}\alpha_v 
 (R_n(\lambda))_{ij} \\
&+\left. \alpha_v  (R_n(\lambda))_{ii}\alpha_v (R_n(\lambda))_{ll}\alpha_v (R_n(\lambda))_{il} \alpha_v  (R_n(\lambda))_{ii} \alpha_v  (R_n(\lambda))_{lj} \right]\\
&+\alpha_v (R_n(\lambda))_{il}\alpha_v (R_n(\lambda))_{ii} \alpha_v  (R_n(\lambda))_{li} \alpha_v (R_n(\lambda))_{ll} \alpha_v (R_n(\lambda))_{ij} \\
&+\alpha_v (R_n(\lambda))_{il}\alpha_v (R_n(\lambda))_{ii} \alpha_v  (R_n(\lambda))_{ll}  \alpha_v  (R_n(\lambda))_{ii}\alpha_v  (R_n(\lambda))_{lj} \\
&+  \alpha_v  (R_n(\lambda))_{il}\alpha_v (R_n(\lambda))_{il}\alpha_v (R_n(\lambda))_{ii} \alpha_v  (R_n(\lambda))_{li} \alpha_v  (R_n(\lambda))_{lj} \\
&+\left. \alpha_v  (R_n(\lambda))_{il}\alpha_v (R_n(\lambda))_{il}\alpha_v (R_n(\lambda))_{il} \alpha_v  (R_n(\lambda))_{il} \alpha_v  (R_n(\lambda))_{ij}\right]\}.
\end{eqnarray*}
\end{theoreme}

 \begin{proof}
 We shall apply formula \eqref{IPP} to the ${ M}_m(\C)$-valued function $\phi(\xi) = (R_n(\lambda))_{ij}$ for $1 \leq i,j \leq n$ and $\xi$ is one of the variable $\frac{X^{(v)}_{kk}}{\sqrt{n}}$,
$\sqrt{2} \frac{Re(X^{(v)}_{kl})}{\sqrt{n}}$, $\sqrt{2} \frac{Im(X^{(v)}_{kl})}{\sqrt{n}}$ for $1 \leq k<l \leq n$ and $1\leq v \leq r$. \\
We notice that
$$\frac{\partial \phi}{\partial Re(\frac{X^{(v)}_{kk}}{\sqrt{n}})}  = \phi'_{\frac{X_n^{(v)}}{\sqrt{n}}} . e_{kk}, \;\frac{X^{(v)}_{kk}}{\sqrt{n}} = \Tr(\frac{X_n^{(v)}}{\sqrt{n}} e_{kk}), \ 1 \leq k \leq n,$$
$$\frac{\partial \phi}{\partial \sqrt{2} Re(\frac{X^{(v)}_{kl}}{\sqrt{n}})}  = \phi'_{\frac{X_n^{(v)}}{\sqrt{n}}} . e_{kl}, \; \sqrt{2} Re(\frac{X^{(v)}_{kl}}{\sqrt{n}}) = \Tr(\frac{X_n^{(v)}}{\sqrt{n}} e_{kl}), \ 1 \leq k <l \leq n, $$
$$\frac{\partial \phi}{\partial \sqrt{2} Im(\frac{X^{(v)}_{kl}}{\sqrt{n}})}  = \phi'_{\frac{X_n^{(v)}}{\sqrt{n}}} . f_{kl} , \;\sqrt{2} Im(\frac{X^{(v)}_{kl}}{\sqrt{n}}) = \Tr(\frac{X_n^{(v)}}{\sqrt{n}} f_{kl}), \ 1 \leq k <l \leq n. $$

\noindent Let  $1\leq k < l \leq n$  be fixed.
For simplicity, we write $\phi'$, $\phi''$, $\phi'''$ and  $\phi''''$ for the
first derivatives of $\phi$ with respect to $\sqrt{2}
Re(\frac{X^{(v)}_{kl}}{\sqrt{n}})$. Then, according to (\ref{resolvente}),
\begin{eqnarray*}
\phi' &=& \left[ R_n(\lambda) \alpha_v \otimes e_{kl} R_n(\lambda) \right]_{ij} \\
\phi''&=&2 \left[R_n (\lambda) \alpha_v \otimes e_{kl}R_n (\lambda)  \alpha_v \otimes e_{kl}R_n (\lambda)\right]_{ij} \\
\phi'''&=&6 \left[ R_n (\lambda) \alpha_v \otimes e_{kl}R_n (\lambda)  \alpha_v \otimes e_{kl} R_n (\lambda)  \alpha_v \otimes e_{kl} R_n(\lambda)\right]_{ij}\\
\phi''''&=&24 \left[R_n (\lambda) \alpha_v \otimes e_{kl}R_n (\lambda)  \alpha_v \otimes e_{kl} R_n (\lambda)  \alpha_v \otimes e_{kl} R_n(\lambda)\alpha_v \otimes e_{kl} R_n (\lambda)\right]_{ij}
\end{eqnarray*}
Writing \eqref{IPP} in this setting gives
$$\mathbb{E}[ \Tr(\frac{X_n^{(v)}}{\sqrt{n}}e_{kl}) (R_n (\lambda))_{ij} ] = \frac{1}{n} \mathbb{E}[ \phi'] +
\frac{\kappa_3^{k,l,v}}{2n\sqrt{n}} \mathbb{E}[\phi'']  +
\frac{\kappa_4^{k,l,v}}{6n^2} \mathbb{E}[\phi''']+ \frac{\kappa_5^{k,l,v}}{24 n^2 \sqrt{n}}\mathbb{E}[\phi'''']$$
\begin{equation} \label{1}+O_{k,l,i,j,v}\left(\frac{1}{n^3}\right)
\end{equation}
where there exists $C>0$ such that for every $k,l,i,j \in\{1,\ldots,n\}$ and every $v\in \{1,\ldots,r\}$, \begin{equation}\label{majuniv}\left\|  O_{k,l,i,j}\left(\frac{1}{n^3}\right)\right\|\ \leq \frac{C\Vert \alpha_v \Vert^5 \Vert ( \Im \lambda)^{-1} \Vert^6}{n^3},\end{equation}
with the analogous equations with $f_{kl}$  and $e_{kk}$ replacing  the $\kappa_i^{k,l,v}$'s by the  $\tilde \kappa_i^{k,l,v}$'s and $\kappa_i^{k,k,v}$'s respectively. \\

\noindent
Noticing that for $ k<l$, $$E_{kl} =\frac{e_{kl} -\sqrt{-1}f_{kl}}{\sqrt{2}}\;
\mbox{and }\; E_{lk} =\frac{e_{kl} +\sqrt{-1}f_{kl}}{\sqrt{2}},$$
  we deduce that :\\

$\mathbb{E}[ \Tr(\frac{X_n^{(v)}}{\sqrt{n}} E_{kl}) (R_n(\lambda))_{ij} ] $
\begin{eqnarray*} &=& \frac{1}{n}\mathbb{E}\left[ R_n(\lambda) \alpha_v \otimes E_{kl} R_n(\lambda)\right]_{ij}\\
&&+ \frac{\kappa_3^{k,l,v}}{  \sqrt{2}n\sqrt{n}}\mathbb{E} \left\{\left[ R_n(\lambda) \alpha_v \otimes e_{kl} R_n(\lambda) \alpha_v \otimes e_{kl}  R_n(\lambda) \right]_{ij}\right\}\\ && -\sqrt{-1} \frac{\tilde \kappa_3^{k,l,v}}{  \sqrt{2}n\sqrt{n}}\mathbb{E} \left\{ \left[ R_n(\lambda) \alpha_v \otimes f_{kl} R_n(\lambda) \alpha_v \otimes f_{kl} R_n(\lambda) )\right]_{ij}\right\}\\
&&+ \frac{\kappa_4^{k,l,v}}{  \sqrt{2}n^2}\mathbb{E} \left\{\left[ R_n(\lambda) \alpha_v \otimes e_{kl} R_n(\lambda) \alpha_v \otimes e_{kl}  R_n(\lambda) \alpha_v \otimes e_{kl}R_n(\lambda)\right]_{ij}\right\}\\ && -\sqrt{-1}  \frac{\tilde \kappa_4^{k,l,v}}{  \sqrt{2}n^2}\mathbb{E} \left\{\left[ R_n(\lambda) \alpha_v \otimes f_{kl} R_n(\lambda) \alpha_v \otimes f_{kl} R_n(\lambda) \alpha_v \otimes f_{kl} R_n(\lambda)\right]_{ij}\right\}\\
&&+ \frac{\kappa_5^{k,l,v}}{  \sqrt{2}n^2\sqrt{n}} \\&&~~~~\times \mathbb{E} \left\{\left[ R_n(\lambda) \alpha_v \otimes e_{kl} R_n(\lambda) \alpha_v \otimes e_{kl}  R_n(\lambda) \alpha_v \otimes e_{kl}R_n(\lambda) \alpha_v \otimes e_{kl}R_n(\lambda)\right]_{ij}\right\}\\ && -\sqrt{-1} \frac{\tilde \kappa_5^{k,l,v}}{  \sqrt{2}n^2\sqrt{n}}  \\&&~~~ \times \mathbb{E} \left\{ \left[ R_n(\lambda) \alpha_v \otimes f_{kl} R_n(\lambda) \alpha_v \otimes f_{kl} R_n(\lambda) \alpha_v \otimes f_{kl} R_n(\lambda) \alpha_v \otimes f_{kl} R_n(\lambda)\right]_{ij}\right\}\\
&&+ O_{l,k,i,j,v}\left(\frac{1}{n^3}\right),
 \end{eqnarray*}

$\mathbb{E}[ \Tr(\frac{X^{(v)}_n }{\sqrt{n}}E_{lk}) (R_n(\lambda))_{ij} ]$
\begin{eqnarray*}  &=& \frac{1}{n}\mathbb{E}\left[ R_n(\lambda) \alpha_v \otimes E_{lk} R_n(\lambda)\right]_{ij} \\
&&+ \frac{\kappa_3^{k,l,v}}{  \sqrt{2}n\sqrt{n}}\mathbb{E} \left\{\left[ R_n(\lambda) \alpha_v \otimes e_{kl} R_n(\lambda) \alpha_v \otimes e_{kl}  R_n(\lambda) \right]_{ij}\right\}\\ &&+\sqrt{-1}\frac{\tilde \kappa_3^{k,l,v}}{  \sqrt{2}n\sqrt{n}}\mathbb{E} \left\{ \left[ R_n(\lambda) \alpha_v \otimes f_{kl} R_n(\lambda) \alpha_v \otimes f_{kl} R_n(\lambda) )\right]_{ij}\right\}\\
&& + \frac{\kappa_4^{k,l,v}}{  \sqrt{2}n^2}\mathbb{E} \left\{\left[ R_n(\lambda) \alpha_v \otimes e_{kl} R_n(\lambda) \alpha \otimes e_{kl}  R_n(\lambda) \alpha_v \otimes e_{kl}R_n(\lambda)\right]_{ij}\right\}\\ && +\sqrt{-1}  \frac{\tilde\kappa_4^{k,l,v}}{  \sqrt{2}n^2}\mathbb{E} \left\{ \left[ R_n(\lambda) \alpha_v \otimes f_{kl} R_n(\lambda) \alpha_v \otimes f_{kl} R_n(\lambda) \alpha_v \otimes f_{kl} R_n(\lambda)\right]_{ij}\right\}\\
&&+ \frac{\kappa_5^{k,l,v}}{  \sqrt{2}n^2\sqrt{n}}\\ &&~~\times \mathbb{E} \left\{\left[ R_n(\lambda) \alpha_v \otimes e_{kl} R_n(\lambda) \alpha_v \otimes e_{kl}  R_n(\lambda) \alpha_v \otimes e_{kl}R_n(\lambda) \alpha_v \otimes e_{kl}R_n(\lambda)\right]_{ij}\right\}\\ &&+\sqrt{-1}
\frac{\tilde \kappa_5^{k,l,v}}{  \sqrt{2}n^2\sqrt{n}} \\ &&~~ \times \mathbb{E} \left\{ \left[ R_n(\lambda) \alpha_v \otimes f_{kl} R_n(\lambda) \alpha_v \otimes f_{kl} R_n(\lambda) \alpha_v \otimes f_{kl} R_n(\lambda) \alpha_v \otimes f_{kl} R_n(\lambda)\right]_{ij}\right\}\\
&&+ O_{k,l,i,j,v}\left(\frac{1}{n^3}\right)
 \end{eqnarray*}
and \\

\noindent $\mathbb{E}[ \Tr(\frac{X^{(v)}_n}{\sqrt{n}} E_{kk}) (R_n(\lambda))_{ij} ]$
\begin{eqnarray*}  &=& \frac{1}{n}\mathbb{E}\left[ R_n(\lambda) \alpha_v \otimes E_{kk} R_n(\lambda)\right]_{ij}\\&&  + \frac{\kappa_3^{k,k,v}}{  n\sqrt{n}} \mathbb{E} \left\{\left[  R_n(\lambda) \alpha_v \otimes e_{kk}  R_n(\lambda) \alpha_v \otimes e_{kk}R_n(\lambda)\right]_{ij}\right\}
\\&&  + \frac{\kappa_4^{k,k,v}}{  n^2} \mathbb{E} \left\{\left[ R_n(\lambda) \alpha_v \otimes e_{kk} R_n(\lambda) \alpha_v \otimes e_{kk}  R_n(\lambda) \alpha_v \otimes e_{kk}R_n(\lambda)\right]_{ij}\right\}\\
&&  + \frac{\kappa_5^{k,k,v}}{  n^2 \sqrt{n}} \mathbb{E} \left\{\left[ R_n(\lambda) \alpha_v \otimes e_{kk} R_n(\lambda) \alpha_v \otimes e_{kk}  R_n(\lambda) \alpha_v \otimes e_{kk}R_n(\lambda)e_{kk}R_n(\lambda)\right]_{ij}\right\}\\
&&+  O_{k,k,i,j,v}\left(\frac{1}{n^3}\right)
\end{eqnarray*}
where the $O_{k,l,i,j}\left(\frac{1}{n^3}\right)$'s satisfy \eqref{majuniv}.
We then obtain:
  \begin{eqnarray}\mathbb{E}[(\alpha_v \otimes \frac{X^{(v)}_n}{\sqrt{n}}) R_n(\lambda ) ]_{lj}& =& \mathbb{E}\left[ \alpha_v H_n(\lambda) \alpha_v    [R_n(\lambda ) ]_{lj}  \right]\nonumber
\\&&+ \frac{1}{2\sqrt{2}n\sqrt{n}}  \sum_{i=1}^n  \left\{1-\delta_{il}\left(1-\frac{1}{\sqrt{2}}\right)\right\}M^{(3)}(v,i,l,j)\nonumber \\&&+  \frac{1}{4n^2} \sum_{i=1}^n \left(1-\frac{1}{2}\delta_{il}\right) M^{(4)}(v,i,l,j)\nonumber\\&&
+  \frac{1}{4\sqrt{2}n^2\sqrt{n}}\sum_{i=1}^n  \left\{1-\delta_{il}\left(1-\frac{1}{2\sqrt{2}}\right)\right\}M^{(5)}(v,i,l,j)\nonumber\\&&+O_{l,j,v}(1/n^2),\label{4}\end{eqnarray}

\noindent  where the $M^{(q)}(v,i,l,j)$ for $ q=3,4,5$ are defined in Theorem \ref{resolvante} and 
 there exists $C>0$ such that for every $l,j \in\{1,\ldots,n\}$ and any $v \in \{1,\ldots,r\}$, \begin{equation}\label{majuniv2}\left\|  O_{l,j,v}\left(\frac{1}{n^2}\right)\right\|\ \leq \frac{C\Vert \alpha_v \Vert^5 \Vert ( \Im \lambda)^{-1} \Vert^6}{n^2}.\end{equation}

\noindent Now,
\begin{eqnarray*}
\sum_{v=1}^r(\alpha_v   \otimes X_n^{(v)})R_n (\lambda)
 &=&  (S_n - \sum_{u=1}^t \beta_u \otimes A_n^{(u)}-\gamma \otimes I_n) (\lambda \otimes I_n - S_n)^{-1}  \\
 &=& -I_m\otimes I_n  +\left[ (\lambda - \gamma) \otimes I_n -\sum_{u=1}^t \beta_u \otimes A_n^{(u)} \right] R_n(\lambda)
 \end{eqnarray*}
 implying
\begin{eqnarray}\sum_{v=1}^r \mathbb{E}[(\alpha_v \otimes \frac{X_n^{(v)}}{\sqrt{n}}) R_n(\lambda ) ]_{lj} &=& -\delta_{jl} I_m +(\lambda-\gamma) \mathbb{E} (R_n(\lambda))_{lj}\nonumber
\\&&- \left[\sum_{u=1}^t \beta_u \otimes A_n^{(u)}  \mathbb{E}( R_n(\lambda))\right]_{lj}. \label{identiteresol}\end{eqnarray}
On the other hand, we have 
\begin{eqnarray}\mathbb{E}\left[ \alpha_v H_n(\lambda) \alpha_v    [R_n(\lambda ) ]_{lj}  \right]&= &  \mathbb{E}\left[ \alpha_v [H_n(\lambda) -\mathbb{E}(H_n(\lambda))] \alpha_v    [R_n(\lambda ) ]_{lj} \right]\nonumber\\ &&
+
\alpha_v G_n(\lambda) \alpha_v \mathbb{E}\left[ [R_n(\lambda ) ]_{lj} \right]
.\label{centrage}\end{eqnarray}
Hence \eqref{4}, \eqref{identiteresol} and \eqref{centrage} yield \\

 $ -\delta_{jl}I_m +(\lambda-\gamma) \mathbb{E} (R_n(\lambda))_{lj}
- \left[\sum_{u=1}^t \beta_u \otimes A_n^{(u)}  \mathbb{E}( R_n(\lambda))\right]_{lj}$ \begin{equation}\label{me} = \sum_{v=1}^r\alpha_v G_n(\lambda) \alpha_v \mathbb{E}\left[ [R_n(\lambda ) ]_{lj} \right] +
\Xi_{lj}(\lambda) \end{equation}
where $$\Xi_{lj}(\lambda)=\Psi_{lj}(\lambda)
+O^{(u)}_{l,
j}(1/n^2)$$ and  $\Psi_{lj}$ is defined in Theorem \ref{resolvante}.
% and $O_{l,j}(1/n^2) $ satisfies:
%\begin{equation}\label{ajout}\exists C>0, \; \forall l,j \in\{1,\ldots,n\}, \; \left\|O_{l,j}(1/n^2)\right\| \leq C \frac{\Vert (\Im  \lambda)^{-1}\Vert^6}{n^2}.%\end{equation}
Thus, we have 
$$ \left[\left(\lambda-\gamma -\sum_{v=1}^r\alpha_v G_n(\lambda) \alpha_v\right)\otimes I_n - \sum_{u=1}^t \beta_u \otimes A_n^{(u)} \right]  \mathbb{E} (R_n(\lambda))
 =  I_n\otimes I_m +
\Xi(\lambda).$$
\eqref{mast} readily follows. \end{proof}

\begin{proposition}\label{55}
For any $p,q\in \{1,\ldots,n\}^2$, for any $mn\times mn $ deterministic matrix $F_n(\lambda)$ such that  
$  F_n(\lambda)=O(1)$, setting $\Psi(\lambda)=\sum_{l,j}\Psi_{lj}(\lambda)\otimes E_{lj}$ where $\Psi_{lj}$ is defined by \eqref{psi}, we have\\

$\left\{Y_n(\lambda)\Psi(\lambda)F_n(\lambda)\right\}_{pq}$
\begin{eqnarray} &=&\frac{1}{2\sqrt{2}n\sqrt{n}}  \sum_{v=1}^r\sum_{i,l=1}^n (\kappa_3^{i,l,v} -\tilde \kappa_3^{i,l,v}\sqrt{-1})(Y_n)_{pl}\nonumber \\&& ~~~~~~~~~~~~~~~~~~~~~~~~~\times \mathbb{E} \{\alpha_v(R_n(\lambda))_{ii}\alpha_v(R_n(\lambda))_{ll}\alpha_v 
 (R_n(\lambda)F_n(\lambda))_{iq}\} \nonumber\\&&+O^{(u)}_{p,q}(\frac{1}{{n}}). \label{YPSIF}\end{eqnarray}
%where there exists a polynomial $Q$ with nonnegative coefficients such that for all $p,q\in \{1, \ldots,n\}$, \begin{equation}\label{surn}\Vert O_{p,q}\Vert %\leq \frac{Q(\Vert (\Im  \lambda)^{-1} \Vert )}{n}.\end{equation}
\end{proposition}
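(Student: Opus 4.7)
The plan is to expand the block product $\{Y_n(\lambda)\Psi(\lambda)F_n(\lambda)\}_{pq}=\sum_{l,j}(Y_n)_{pl}\Psi_{lj}(F_n)_{jq}$ and analyse each of the four pieces of $\Psi_{lj}$ from \eqref{psi} separately. In every piece the summation on $j$ collapses the last slot into a factor $(R_nF_n)_{\ast q}$, leaving double sums over $i,l$ that I will control by Cauchy--Schwarz. The single subterm of $M^{(3)}$ in which all three resolvent factors $(R_n)_{ii}$, $(R_n)_{ll}$, $(R_nF_n)_{iq}$ are diagonal-type, namely the second subterm in \eqref{except}, will produce the stated main term, while every other contribution will be shown to be $O^{(u)}_{p,q}(1/n)$.

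The quantitative workhorse is the identity $\sum_{l}\|(M)_{pl}\|^2\le m\|(MM^*)_{pp}\|\le m\|M\|^2$ applied to $M\in\{Y_n,R_n,R_nF_n\}$, which together with Lemma~\ref{inverseY} gives $\sum_l\|(Y_n)_{pl}\|^2=O(1)$, $\sum_l\|(R_nF_n)_{lq}\|^2=O(1)$, and, for each fixed $l$, $\sum_i\|(R_n)_{il}\|^2=O(1)$, whereas diagonal entries only satisfy $\sum_i\|(R_n)_{ii}\|^2\le Cn$. Thus each off-diagonal resolvent entry $(R_n)_{il}$ with $i\neq l$ effectively saves a factor $\sqrt{n}$ in a Cauchy--Schwarz step. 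Applied to the first, third and fourth subterms of $M^{(3)}$ (each of which contains at least one off-diagonal factor $(R_n)_{il}$ or $(R_n)_{li}$), a Cauchy--Schwarz in $i$ pairing the off-diagonal with the diagonal gives $O(\sqrt n)$; a second Cauchy--Schwarz in $l$ against $(Y_n)_{pl}$ and $(R_nF_n)_{lq}$ gives another $O(\sqrt n)$; the resulting $O(n)$ combined with the prefactor $1/(n\sqrt n)$ yields the required $O(1/n)$. The corrections coming from the $(1-\delta_{il}(1-1/\sqrt 2))$-type weights differ from $1$ only on the diagonal $i=l$, producing a single-variable sum that is absorbed by the same estimates.

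The pieces $M^{(4)}$ and $M^{(5)}$ come with prefactors $1/n^2$ and $1/(n^2\sqrt n)$. Even the worst subterms, those containing two diagonal entries (such as \eqref{troiscas4} and \eqref{quatrecas4} in $M^{(4)}$), give at most $n\cdot\sqrt n$ after applying the two Cauchy--Schwarz steps above, hence $O(1/\sqrt n)$ after the $1/n^2$ prefactor; all other subterms are smaller, and the $M^{(5)}$ contribution is handled identically and yields $O(1/n^{3/2})$.

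The most delicate piece is the centered contribution $\mathbb E[\alpha_v(H_n-\mathbb E H_n)\alpha_v(R_n)_{lj}]$, and it is here that the Poincar\'e hypothesis $(H)$ enters decisively. Writing $A:=\alpha_v(H_n-\mathbb E H_n)\alpha_v$, which is mean-zero and $l$-independent, and the random bilinear form $T_{abce}:=\sum_l[(Y_n)_{pl}]_{ab}[(R_nF_n)_{lq}]_{ce}$, the $(a,e)$ scalar entry of the sum over $l$ decouples as $\sum_{bc}\mathbb E[(T_{abce}-\mathbb E T_{abce})\,A_{bc}]$. Scalar Cauchy--Schwarz together with the entrywise variance estimate $\mathrm{Var}((H_n)_{bc})=O(1/n^2)$ provided by $(H)$ through the paper's appendix, and the bound $\mathbb E|T_{abce}|^2=O(1)$ coming from the row identities above, then yield the required $O(1/n)$. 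The main technical obstacle is precisely this last step: one must keep track of polynomial dependence on $\|(\Im\lambda)^{-1}\|$ uniformly in $n$ and extract the $1/n^2$ variance bound for individual entries of $H_n$ in the form prepared in Section~\ref{Notations} and in the appendix.
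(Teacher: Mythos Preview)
Your overall strategy matches the paper's: expand $\{Y_n\Psi F_n\}_{pq}$ blockwise and estimate each piece of $\Psi_{lj}$ via the row-sum bound $\sum_l\|M_{pl}\|^2\le m\|M\|^2$. However, the bookkeeping has slipped, and one of the slips leaves a genuine gap.

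For the $M^{(3)}$ subterms you say the two Cauchy--Schwarz steps produce $O(\sqrt n)\cdot O(\sqrt n)=O(n)$ and then that $O(n)/(n\sqrt n)=O(1/n)$. Both equalities are false: $n/(n\sqrt n)=1/\sqrt n$. What actually happens is that the $l$-step pairing $(Y_n)_{pl}$ against $(R_nF_n)_{lq}$ (or against the off-diagonal $(R_n)_{il}$ when the last factor is $(R_nF_n)_{iq}$) gives $O(1)$, not $O(\sqrt n)$; the double sum is then $O(\sqrt n)$ and the conclusion $O(1/n)$ is correct for the right reason. This is exactly \eqref{Oderacine} in the appendix. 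More seriously, for $M^{(4)}$ you only claim $O(1/\sqrt n)$, which is \emph{not} enough for the proposition. The same correction applies: for a term like \eqref{troiscas4} the $i$-sum contributes a factor $n$, but the remaining $l$-sum $\sum_l\|(Y_n)_{pl}\|\|(R_nF_n)_{lq}\|$ is $O(1)$ by Cauchy--Schwarz, so the total is $O(n)$, not $O(n\sqrt n)$, and the $1/n^2$ prefactor yields the required $O(1/n)$ (this is \eqref{Oden}). The subterms whose last factor becomes $(R_nF_n)_{iq}$ are handled by pairing the off-diagonal middle factor with $(Y_n)_{pl}$ in $l$ and then with $(R_nF_n)_{iq}$ in $i$, again giving $O(n)$.

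For the centred term the paper's argument is much simpler than your entrywise computation. Since $A:=\alpha_v(H_n-\mathbb E H_n)\alpha_v\in M_m(\mathbb C)$ does not depend on $l,j$, one recognises directly
\[
\sum_{l,j}(Y_n)_{pl}\,\mathbb E\big[A\,(R_n)_{lj}\big](F_n)_{jq}
=\mathbb E\Big[\big(Y_n(\lambda)\,(A\otimes I_n)\,R_n(\lambda)F_n(\lambda)\big)_{pq}\Big],
\]
and bounds this by $\|Y_n\|\,\|R_n\|\,\|F_n\|\,\mathbb E\|A\|$, with $\mathbb E\|A\|\le\|\alpha_v\|^2\big(\mathbb E\|H_n-\mathbb E H_n\|^2\big)^{1/2}=O(1/n)$ by \eqref{varhn}. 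No scalar decoupling or variance bound on $T_{abce}$ is needed.
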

\begin{proof}
%Now, 
%we are going to prove \eqref{YPSIF}.
Let us fix $v \in \{1,\ldots,r\}$. Using \eqref{cumulants}, \eqref{Y} and  \eqref{norme}, one can easily deduce from \eqref{Oderacine} (respectively from \eqref{Oden}) that  all  the terms  in $\left\{Y_n(\lambda)\Psi(\lambda)F_n(\lambda)\right\}_{pq}$ corresponding to the $M^{(3)}(v,i,l,j)$'s  in \eqref{psi} excluding $$(\kappa_3^{i,l,v}-\tilde \kappa_3^{i,l,v}\sqrt{-1})\alpha_v (R_n(\lambda))_{ii}\alpha_v (R_n(\lambda))_{ll} \alpha_v  (R_n(\lambda))_{ij} $$  (respectively all  the terms corresponding to the $M^{(4)}(v,i,l,j)$'s)  are equal to $O(1/n)$.

%%%%%%%%%%%%%%%%%%%%%%%%%%%%%%%%%%%%%%%%%%%%%%%%%%%%%%%%%%%%%%%%

%%%%%%%%%%%%%%%%%%%%%%%%%%%%%%%%%%%%%%%%%%%%%%%%%%%%%%%%%%%%%%%%%%%

Let  $C$ be   some constant such that $\sup_{i,l,v} \{\vert \kappa_5^{i,l,v}\vert +\vert \tilde \kappa_5^{i,l,v} \vert\}\leq C$.
For the terms in $\left\{Y_n(\lambda)\Psi(\lambda)F_n(\lambda)\right\}_{pq}$ corresponding to $M^{(5)}(v,i,l,j)$'s, note that using \eqref{norme} they can be all obviously bounded by 
\\

\noindent $
\frac{C\Vert F_n(\lambda) \Vert \Vert \alpha_v\Vert^5  \Vert (Im(\lambda))^{-1}\Vert^5}{n\sqrt{n}}\sum_{l=1}^n \left\| (Y_n(\lambda))_{pl}\right\|$
\begin{eqnarray*}&\leq &
\frac{C\Vert F_n (\lambda)\Vert \Vert \alpha_v\Vert^5  \Vert (Im(\lambda))^{-1}\Vert^5}{n}\left\{\sum_{l=1}^n \left\| (Y_n(\lambda))_{pl}\right\|^2\right\}^{\frac{1}{2}}\\&\leq & \frac{\sqrt{m}C\Vert F_n(\lambda) \Vert \Vert \alpha_v\Vert^5  \Vert (Im(\lambda))^{-1}\Vert^6}{n}\\&=&O(1/n)\end{eqnarray*}
where we used \eqref{l} and \eqref{Y}.\\

\noindent Finally define
$$\hat {\cal R}= \left[ \alpha_v [H_n(\lambda) -\mathbb{E}(H_n(\lambda))] \alpha_v\right] \otimes I_n $$
so that there exists some constant $C>0$ such that 
\\

\noindent $\sum_{j,l=1}^n (Y_n(\lambda))_{pl} \mathbb{E}\left[ \alpha_v [H_n(\lambda) -\mathbb{E}(H_n(\lambda))] \alpha_v    [R_n(\lambda ) ]_{lj}(F_n(\lambda)_{jq} \right]
$\begin{eqnarray*}&=& \mathbb{E}\left[  [Y_n (\lambda)\hat {\cal R} R_n(\lambda ) F_n(\lambda)]_{pq} \right]\\
&\leq&\Vert F_n(\lambda) \Vert  \Vert (Im(\lambda))^{-1}\Vert^2 \mathbb{E}\left[  \Vert \hat  {\cal R} \Vert \right]\\
&
\leq&\frac{C \sqrt{m} \Vert F_n (\lambda)\Vert \Vert \alpha_v\Vert^2}{n}
 \Vert (Im(\lambda))^{-1}\Vert^4\\&=&O(1/n).\end{eqnarray*}
 where we used \eqref{Y}, \eqref{norme} and \eqref{varhn} in the last lines.\\
\noindent It is moreover clear that one can find a common polynomial to bound  the involved $nO_{p,q}(1/n)$.  \eqref{YPSIF} follows.
\end{proof}

\begin{corollaire} \label{estimenunsurn}
For any  $mn\times mn$ deterministic  matrix  $F_n(\lambda)$ such that $F_n(\lambda)=O(1)$, we have  \\

\noindent 
$\mathbb{E}\left[  [ R_n(\lambda ) F_n(\lambda)]_{pq} \right]$ \begin{eqnarray*}&=& \left(Y_n(\lambda)F_n(\lambda)\right)_{pq} 
\\&&+\ \sum_{v=1}^r \sum_{i,l=1}^n \frac{(\kappa_3^{i,l,v} -\tilde \kappa_3^{i,l,v}\sqrt{-1})}{2\sqrt{2}n\sqrt{n}} (Y_n(\lambda))_{pl}  \alpha_v(Y_n(\lambda))_{ii}\alpha_v (Y_n(\lambda))_{ll}\alpha_v\\~&&~~~~~~~~~~~~~~~~~~~~~~~~~~~~~~~~~~~~~~~~~~~~~~~~\times \mathbb{E}\left[ (R_n(\lambda)F_n(\lambda))_{iq} \right]\\&& +O^{(u)}_{p,q}(1/n).\end{eqnarray*}
%where the $O_{p,q}(1/n)$'s satisfy \eqref{surn}.
\end{corollaire}
\begin{proof} 
Noticing that
%, with the $O_{l,j}(1/n^2)$'s satisfying \eqref{ajout}, we have \\

\noindent $\left\|
\sum_{l,j=1}^n \left(Y_n(\lambda)\right)_{pl} O^{(u)}_{l,j}\left(1/n^2\right) \left(F_n(\lambda)\right)_{jq} \right\|$
 \begin{eqnarray*}&\leq & 
 % \frac{C \Vert ( \Im \lambda)^{-1} \Vert^6}{n}
 O\left(\frac{1}{n}\right)\left\{ \sum_{l=1}^n \Vert \left(Y_n(\lambda)\right)_{pl} \Vert^2 \right\}^{1/2} \left\{ \sum_{j=1}^n \Vert \left(F_n(\lambda)\right)_{jq} \Vert^2 \right\}^{1/2}\\
 %&\leq &\frac{Cm \Vert ( \Im \lambda)^{-1} \Vert^7 \Vert F_n(\lambda)\Vert}{n}
 &=&O_{p,q}^{(u)}(1/n) ,
\end{eqnarray*}
(using Lemma \ref{majcarre} and \eqref{Y} in the last line)
it readily follows from Theorem \ref{resolvante} and Proposition \ref{55} that \\

\noindent $\mathbb{E}\left[  [ R_n(\lambda ) F_n(\lambda)]_{pq} \right]$ \begin{eqnarray*}&\hspace*{-0.5cm}=& \left(Y_n(\lambda)F_n(\lambda)\right)_{pq} 
\\&&+ \sum_{v=1}^r  \sum_{i,l=1}^n \frac{(\kappa_3^{i,l,v} -\tilde \kappa_3^{i,l,v}\sqrt{-1})}{2\sqrt{2}n\sqrt{n}} (Y_n(\lambda))_{pl}    \alpha_v\\&&~~~~~~~~~~~~~~~~~~~~~~~~~~~\times \mathbb{E}\left[(R_n(\lambda))_{ii}\alpha_v(R_n(\lambda))_{ll}\alpha_v 
 (R_n(\lambda)F_n(\lambda))_{iq} \right]\\&& +O^{(u)}_{p,q}(1/n).\end{eqnarray*}
%where the $O^{(u)}_{p,q}(1/n)$'s satisfy \eqref{surn}.\\

%%%%%%%%%%%%%%%%%%%%%
To simplify the writing let us set $U_i=\alpha_v (R_n(\lambda))_{ii}$, $V_l=\alpha_v(R_n(\lambda))_{ll}$ and $W_i= \alpha_v 
 (R_n(\lambda)F_n(\lambda))_{iq}$.
We have \\

$\mathbb{E}\left[U_i V_l W_i\right]$
\begin{eqnarray}&=&\mathbb{E}\left[ (U_i-\mathbb{E}(U_i)) (V_l-\mathbb{E}(V_l))W_i\right]  \nonumber\\&&+ \mathbb{E}\left[ (U_i-\mathbb{E}(U_i))\mathbb{E}(V_l) (W_i-\mathbb{E}(W_i))\right] \nonumber
\\&&+ \mathbb{E}(U_i) \mathbb{E}\left[ (V_l-\mathbb{E}(V_l))(W_i-\mathbb{E}(W_i))\right]+\mathbb{E}\left[U_i\right]
\mathbb{E}\left[V_l\right]\mathbb{E}\left[W_i\right]. \label{decomposition}
\end{eqnarray}
%%%%%%%%%%%%%%%%%%%%%%

\noindent Now,\\

\noindent $\left\|\sum_{i,l=1}^n (\kappa_3^{i,l,v} -\tilde \kappa_3^{i,l,v}\sqrt{-1})(Y_n(\lambda))_{pl}    \mathbb{E}\left[ (U_i-\mathbb{E}(U_i)) (V_l-\mathbb{E}(V_l))W_i\right]\right\|$ 

\begin{eqnarray*}&\leq& C\Vert \alpha_v \Vert \Vert ( \Im \lambda)^{-1} \Vert \Vert F_n(\lambda)\Vert
\\&&~~~~~~\times  \sum_{i,l=1}^n \left\| 
(Y_n(\lambda))_{pl} \right\|\left\{
\mathbb{E} \left(  \left\| U_{i} -\mathbb{E}\left( U_i\right)\right\|^2\right)\right\}^{1/2}
\left\{\mathbb{E} \left(  \left\| V_l -\mathbb{E}\left( V_l\right)\right\|^2\right)\right\}^{1/2}\\
&\leq &\sqrt{n} C\Vert \alpha_v \Vert \Vert {( \Im \lambda)}^{-1} \Vert \Vert F_n(\lambda)\Vert \left\{\sum_{l=1}^n \left\| (Y_n(\lambda))_{pl} \right\|^2\right\}^{1/2}
\\&&~~~~~~~~~~~~~~~~~\times  
\left\{\sum_{l=1}^n \mathbb{E} \left(  \left\|V_l -\mathbb{E}\left( V_l\right)\right\|^2\right)\right\}^{1/2}\left\{\sum_{i=1}^n \mathbb{E} \left(  \left\|U_i -\mathbb{E}\left( U_i\right)\right\|^2\right)\right\}^{1/2}.\end{eqnarray*}

%%%%
\noindent Moreover,
$$\left\|\sum_{i,l=1}^n (\kappa_3^{i,l,v} -\tilde \kappa_3^{i,l,v}\sqrt{-1})(Y_n(\lambda))_{pl}    \mathbb{E}\left[ (U_i-\mathbb{E}(U_i))\mathbb{E}(V_l) (W_i-\mathbb{E}(W_i))\right]\right\|$$
\begin{eqnarray*}&\leq& C\Vert \alpha_v \Vert \Vert ( \Im \lambda)^{-1} \Vert  \sum_{i,l=1}^n \left\| (Y_n(\lambda))_{pl} \right\|\\&&~~~~~~~~~~~~~~~~~\times  \left\{
\mathbb{E} \left(  \left\| U_{i} -\mathbb{E}\left( U_i\right)\right\|^2\right)\right\}^{1/2}
\left\{\mathbb{E} \left(  \left\| W_i -\mathbb{E}\left( W_i\right)\right\|^2\right)\right\}^{1/2}\\
&\leq &\sqrt{n} C\Vert \alpha_v \Vert \Vert {( \Im \lambda)}^{-1} \Vert 
 \left\{\sum_{l=1}^n \left\| (Y_n(\lambda))_{pl} \right\|^2\right\}^{1/2}
\\&& \times \left\{\sum_{l=1}^n \mathbb{E} \left(  \left\|W_i -\mathbb{E}\left( W_i\right)\right\|^2\right)\right\}^{1/2} \left\{\sum_{i=1}^n \mathbb{E} \left(  \left\|U_i -\mathbb{E}\left( U_i\right)\right\|^2\right)\right\}^{1/2}.\end{eqnarray*}
%%%%%
Finally
$$\left\|\sum_{i,l=1}^n (\kappa_3^{i,l,v} -\tilde \kappa_3^{i,l,v}\sqrt{-1})(Y_n(\lambda))_{pl}    \mathbb{E}\left[ U_i\right]\mathbb{E}\left[ (V_l-\mathbb{E}(V_l)) (W_i-\mathbb{E}(W_i))\right]\right\|$$
\begin{eqnarray*}&\leq& C\Vert \alpha_v \Vert \Vert ( \Im \lambda)^{-1} \Vert  \sum_{i,l=1}^n \left\| (Y_n(\lambda))_{pl} \right\|
\\&& \times \left\{
\mathbb{E} \left(  \left\| V_{l} -\mathbb{E}\left( V_l\right)\right\|^2\right)\right\}^{1/2}
\left\{\mathbb{E} \left(  \left\| W_i -\mathbb{E}\left( W_i\right)\right\|^2\right)\right\}^{1/2}\\
&\leq &\sqrt{n} C\Vert \alpha_v \Vert \Vert {( \Im \lambda)}^{-1} \Vert 
 \left\{\sum_{l=1}^n \left\| (Y_n(\lambda))_{pl} \right\|^2\right\}^{1/2}
\\&& \times \left\{\sum_{l=1}^n \mathbb{E} \left(  \left\|V_l -\mathbb{E}\left( V_l\right)\right\|^2\right)\right\}^{1/2}\left\{\sum_{i=1}^n \mathbb{E} \left(  \left\|W_i -\mathbb{E}\left( W_i\right)\right\|^2\right)\right\}^{1/2}.\end{eqnarray*}

%%%%%%%%%
 \noindent Using Lemma \ref{var},   (\ref{l}) and \eqref{Y}, we readily deduce that \\

\noindent $\mathbb{E}\left[  [ R_n(\lambda ) F_n(\lambda)]_{pq} \right]$ \begin{eqnarray}&=& \left(Y_n(\lambda)F_n(\lambda)\right)_{pq} \nonumber
\\&&+\frac{1}{2\sqrt{2}n\sqrt{n}}\sum_{v=1}^r  \sum_{i,l=1}^n  (\kappa_3^{i,l,v} -\tilde \kappa_3^{i,l,v}\sqrt{-1})(Y_n(\lambda)_{pl}  \alpha_v\\&&~~~~~~~~~~~~~~~~~~~~~~~\times\mathbb{E}\left[(R_n(\lambda))_{ii}\right]
\alpha_v\mathbb{E}\left[(R_n(\lambda))_{ll}\right]\alpha_v \mathbb{E} \left[ (R_n(\lambda)F_n(\lambda))_{iq} \right] \nonumber \\&& +O^{(u)}_{p,q}(1/{n}). \label{eq}\end{eqnarray}
%where the $O_{p,q}(1/n)$'s satisfy \eqref{surn}.
%%%%%%%%%%%%%%%%%
\noindent Now, define 
$${\cal R}= \sum_{v=1}^r\sum_{i,l=1}^n  (\kappa_3^{i,l,v} -\tilde \kappa_3^{i,l,v}\sqrt{-1})    \alpha_v\mathbb{E}\left[(R_n(\lambda))_{ii}\right]\alpha_v\mathbb{E}
\left[(R_n(\lambda))_{ll}\right]\alpha_v\otimes E_{li},$$
It is easy to see that  $\Vert {\cal R}\Vert\leq C \Vert (\Im \lambda)^{-1}\Vert^2 n.$
We have \\

\noindent $ \sum_{v=1}^r \sum_{i,l=1}^n(\kappa_3^{i,l,v} -\tilde \kappa_3^{i,l,v}\sqrt{-1})  (Y_n(\lambda))_{pl}   \alpha_v\mathbb{E}\left[(R_n(\lambda))_{ii}\right]  $ $$ ~~~~~~~~~~\times \alpha_v\mathbb{E}\left[(R_n(\lambda))_{ll}\right]\alpha_v\mathbb{E} \left[ (R_n(\lambda)F_n(\lambda))_{iq} \right]= \left[ Y_n(\lambda) {\cal R}  \mathbb{E} (R_n(\lambda) F_n(\lambda))\right]_{pq}.$$
So that if we define $$T=\sum_{p,q=1}^n T_{pq}\otimes E_{pq}$$ where 
$$T_{pq}
= \frac{1}{2\sqrt{2}n\sqrt{n}}\sum_{v=1}^r \sum_{l,i=1}^n (\kappa_3^{i,l,v} -\tilde \kappa_3^{i,l,v}\sqrt{-1})    (Y_n(\lambda))_{pl}    \alpha_v\mathbb{E}\left[(R_n(\lambda))_{ii}\right]$$ $$~~~~~~~~~~~~~~~~~~~~~~~~~~~~\times\alpha_v\mathbb{E}\left[(R_n(\lambda))_{ll}\right]
\alpha_v\mathbb{E} \left[ (R_n(\lambda)F_n(\lambda))_{iq} \right],$$
we have \begin{equation}\label{Q}\Vert T\Vert=\left\| \frac{1}{2\sqrt{2}n\sqrt{n}} Y_n(\lambda) {\cal R}  \mathbb{E} (R_n(\lambda) F_n(\lambda)) \right\|=O(1/\sqrt{n}).\end{equation}
Hence, in particular we have \begin{equation}\label{estimenracinen} \mathbb{E}\left[  [ R_n(\lambda ) F_n]_{pq} \right]= \left(Y_n(\lambda)F_n(\lambda)\right)_{pq} +O^{(u)}_{p,q}(1/\sqrt{n}). \end{equation}
% where there exists a polynomial $Q$ with nonnegative coefficients such that for any $p,q\in\{1,\ldots,n\}$, 
%\begin{equation}\label{surracine}\Vert O_{p,q}(1/\sqrt{n}) \Vert \leq \frac{Q(\Vert \Im  (\lambda)^{-1}\Vert)}{\sqrt{n}}.\end{equation} 
%%%%%%%%%%%%%

\noindent Now, 
%with the $O_{i,i}(1/\sqrt{n})$'s satisfying \eqref{surracine}, 
we have 
$$\left\| \sum_{i,l=1}^n \frac{ (\kappa_3^{i,l,v} -\tilde \kappa_3^{i,l,v}\sqrt{-1})}{2\sqrt{2}n\sqrt{n}}  (Y_n(\lambda))_{pl}  \alpha_vO^{(u)}_{i,i}(\frac{1}{\sqrt{n}})\alpha_v(Y_n(\lambda))_{ll}\alpha_v\mathbb{E} \left[ (R_n(\lambda)F_n(\lambda))_{iq} \right] \right\|$$
\begin{eqnarray}& \leq& \frac{C \Vert(\Im \lambda)^{-1} \Vert }{n} \left(  \sum_{l=1}^n \Vert (Y_n(\lambda))_{pl} \Vert^2 \right)^{1/2} \left\{\sum_{i=1}^n 
\left\| \mathbb{E} \left[(R_n(\lambda)F_n(\lambda))_{iq}\right] \right\|^2 \right\}^{1/2}\nonumber \\&&~~~~~~~~~~~~~~~~~~~~~~~~~\times  \left\{\sum_{i=1}^n \Vert O^{(u)}_{i,i}(1/{\sqrt{n}})\Vert^2\right\}^{1/2} \nonumber \\ & =& O^{(u)}_{p,q}(1/n) \label{eq2} 
\end{eqnarray}
%with the  $O_{p,q}(\frac{1}{n}) $'s satisfying \eqref{surn},
where we used (\ref{l}) twice and \eqref{Y} and \eqref{norme} in the last line.
Similarly $$
\left\|  \sum_{i,l=1}^n \frac{(\kappa_3^{i,l,v} -\tilde \kappa_3^{i,l,v}\sqrt{-1})}{2\sqrt{2}n\sqrt{n}} (Y_n(\lambda))_{pl}  \alpha_v(Y_n(\lambda))_{ii}\alpha_vO_{l,l}(\frac{1}{\sqrt{n}})\alpha_v\mathbb{E} \left[ (R_n(\lambda)F_n(\lambda))_{iq} \right] \right\|$$ \begin{equation}\label{eq3}=O^{(u)}_{p,q}(\frac{1}{{n} }), \end{equation} 
$$
\left\|  \sum_{i,l=1}^n \frac{ (\kappa_3^{i,l,v} -\tilde \kappa_3^{i,l,v}\sqrt{-1})}{2\sqrt{2}n\sqrt{n}}(Y_n(\lambda))_{pl}  \alpha_vO_{i,i}(\frac{1}{\sqrt{n}})\alpha_vO_{l,l}(1/\sqrt{n})\alpha_v\mathbb{E} \left[ (R_n(\lambda)F_n(\lambda))_{iq} \right] \right\|$$ \begin{equation}\label{eq4}=O^{(u)}_{p,q}(\frac{1}{{n \sqrt{n}} }) \end{equation}

(\ref{eq}), (\ref{estimenracinen}) and (\ref{eq2}), (\ref{eq3}), \eqref{eq4} readily yields Corollary \ref{estimenunsurn}.
\end{proof}
\begin{corollaire} \label{ME} With the notations of Section \ref{Notations},
 
 \begin{eqnarray*}G_n(\lambda) &=&\mathbb{E} \left({\rm id}_m\otimes tr_n  R_n(\lambda)\right)\\
&=&G_{\sum_{u=1}^t\beta_u \otimes a_n^{(u)}}\left(\lambda -\gamma -\sum_{v=1}^r \alpha_v G_n(\lambda)\alpha_v \right)
\\&&+L_n(\lambda)
+\epsilon_n(\lambda)
 \end{eqnarray*}
 where 
$$L_n(\lambda)=\frac{1}{n}\sum_{p=1}^n  \left[Y_n(\lambda)
\Psi(\lambda) \right]_{pp},$$
(with $\Psi(\lambda)$ defined in Theorem \ref{resolvante} and $Y_n(\lambda)$ defined in Lemma \ref{inverseY})
 and $$ \epsilon_n(\lambda) =O \left( \frac{1}{n\sqrt{n}}\right).$$
 Moreover \begin{equation} \label{L}   L_n(\lambda) =O \left( \frac{1}{\sqrt{n}}\right).\end{equation}

 \end{corollaire}
\begin{proof}
First note that, since  the distribution of  $a_n=(a_n^{(1)},\ldots,a_n^{(t)})$  in $({\cal A},\tau)$  coincides with the distribution of $(A_n^{(1)},\ldots, A_n^{(t)})$ in $({ M}_n(\mathbb{C}),\tr_n)$, we have \begin{eqnarray*}{\rm id}_m\otimes tr_n Y_n(\lambda)&= &{\rm id}_m\otimes \tau \left((\lambda -\gamma -\sum_{v=1}^r \alpha_v G_n(\lambda)\alpha_v)\otimes 1_{\cal A}-\sum_{u=1}^t\beta_u \otimes a_n^{(u)} \right)^{-1}
\\&=& G_{\sum_{u=1}^t\beta_u \otimes a_n^{(u)}}\left(\lambda -\gamma -\sum_{v=1}^r \alpha_v G_n(\lambda)\alpha_v \right).\end{eqnarray*}
Then, the corollary readily follows from Theorem \ref{resolvante} and Proposition \ref{55} by noting that 
\begin{itemize}
\item 
%if there exists a polynomial $Q$ with nonnegative coefficients such that for every $l,p \in \{1,\ldots,n\}$,
%$$\Vert O_{l,p}(1/n^2)\Vert \leq \frac{Q(\vert (\Im  \lambda)^{-1}\Vert)}{n^2},$$ then\\

\noindent  
$\frac{1}{n}\Vert \sum_{p,l=1}^n (Y_n(\lambda))_{pl}O_{lp}^{(u)}(1/n^2)\Vert$  \begin{eqnarray*} & \leq &\frac{1}{n}\left(\sum_{p,l=1}^n \Vert (Y_n(\lambda))_{pl}\Vert ^2 \right)^{1/2}\left( \sum_{p,l=1}^n \Vert O^{(u)}_{lp}(1/n^2)\Vert ^2 \right)^{1/2}\\&=&O(\frac{1}{n\sqrt{n}})\end{eqnarray*}
where we used Lemma \ref{majcarre} and \eqref{Y} in the last line.
\item $$ \Vert \sum_{i,l,p=1}^n \frac{(\kappa_3^{i,l,v} -\tilde \kappa_3^{i,l,v}\sqrt{-1})}{n^2\sqrt{n}}(Y_n(\lambda))_{pl}    \mathbb{E} \{\alpha_v(R_n(\lambda))_{ii}\alpha_v(R_n(\lambda))_{ll}\alpha_v 
 (R_n(\lambda))_{ip} \Vert $$
$$\leq   \frac{C\Vert \alpha_v \Vert^3 \Vert (\Im  \lambda) ^{-1}\Vert^2}{n^2} \sum_{l,p=1}^n \Vert (Y_n(\lambda))_{pl}   \Vert \left(\sum_{i=1}^n  \Vert(R_n(\lambda))_{ip} \Vert^2\right)^{1/2}$$
$$ \leq   \frac{ C\sqrt{m}\Vert \alpha_v \Vert^3 \Vert (\Im \lambda )^{-1}\Vert^3}{n}  \left( \sum_{p,l=1}^n \Vert (Y_n(\lambda))_{pl}\Vert ^2 \right)^{1/2} =O(1/\sqrt{n})$$
where we used Lemma \ref{majcarre}, \eqref{norme} and \eqref{Y} in the two last lines.
\end{itemize}
\end{proof}

 %%%%%%%

\begin{theoreme}\label{difftilde}
Let $\lambda$ be in $M_m(\mathbb{C})$ such that 
 $\Im  \lambda>0$, and $\tilde G_n(\lambda)$ as defined in \eqref{defGntilde}. We have 
\begin{equation} \label{prediff}
G_n(\lambda)-\tilde G_n(\lambda)+{E_n(\lambda)}= O(\frac{1}{n\sqrt{n}}),
\end{equation}
where $E_n(\lambda)$ is given by\\

\noindent $E_n(\lambda) =$
\begin{equation}
 \sum_{v=1}^r \tilde G_n'(\lambda) \cdot \alpha_v L_n(\lambda) \alpha_v -\frac{1}{2} \tilde G_n''(\lambda) \cdot\left( \sum_{v=1}^r \alpha_v L_n(\lambda) \alpha_v,  \sum_{v=1}^r \alpha_v L_n(\lambda) \alpha_v\right) -L_n(\lambda) 
\end{equation}
with $L_n(\lambda)$ defined in Corollary \ref{ME}.
\end{theoreme}
\begin{proof}
Let $\lambda$ be in $M_m(\mathbb{C})$ such that 
 $\Im  \lambda>0$. Note that according to \eqref{image}, we have $\Im \left( \lambda-\gamma - \sum_{v=1}^r \alpha_v G_n(\lambda)\alpha_v\right)>0.$
Define  (using the notations of Section \ref{Notations})
\begin{equation}\label{lambdan}\Lambda_n(\lambda)= \gamma +\lambda + \sum_{v=1}^r \alpha_v G_{\sum_{u=1}^t \beta_u \otimes a_n^{(u)}}(\lambda) \alpha_v\end{equation}
and  $\lambda'=\Lambda_n(\lambda-\gamma - \sum_{v=1}^r \alpha_v G_n(\lambda)\alpha_v).$
Using Corollary \ref{ME}, we have 
\begin{eqnarray}\lambda'-\lambda &= & - \sum_{v=1}^r \alpha_v G_n(\lambda) \alpha_v + \sum_{v=1}^r \alpha_v G_{\sum_{u=1}^t \beta_u \otimes a_n^{(u)}}(\lambda -\gamma -\sum_{v=1}^r \alpha_v G_n(\lambda ) \alpha_v) \alpha_v \nonumber \\&=& -\sum_{v=1}^r \alpha_v L_n(\lambda) \alpha_v +O(\frac{1}{n\sqrt{n}})\label{lambdaprimemoinslambdaavant}\\&=& O(1/\sqrt{n}). \label{lambdaprimemoinslambda}
\end{eqnarray}

\noindent Thus there exists  a polynomial $Q$ with nonnegative coefficients 
such that $$\left\|\lambda'-\lambda\right\|\leq \frac{Q(\Vert (\Im \lambda )^{-1}\Vert)}{\sqrt{n}}.$$
-On the one hand, if $$\frac{Q(\Vert (\Im  \lambda )^{-1}\Vert)}{\sqrt{n}}\geq \frac{1}{2\Vert (\Im \lambda )^{-1}\Vert},$$ 
or equivalently 
\begin{equation} \label{1=O(1/n)}
1\leq \frac{2\Vert (\Im \lambda )^{-1}\Vert Q(\Vert (\Im \lambda )^{-1}\Vert)}{\sqrt{n}},
\end{equation}
to prove \eqref{prediff}
it is enough to prove that 
\begin{equation} \label{O(1)}
G_n(\lambda)-\tilde G_n(\lambda)+E_n(\lambda) = O(1).
\end{equation}
Indeed, if we assume that \eqref{1=O(1/n)} and \eqref{O(1)} hold, 
then there exists a polynomial $\tilde Q$ with nonnegative coefficients 
such that 
\begin{eqnarray*}
\left\|G_n(\lambda)-\tilde G_n(\lambda)+E_n(\lambda) \right\|&\leq &\tilde Q(\Vert (\Im  \lambda )^{-1}\Vert)\\
&\leq &\tilde Q(\Vert ( \Im \lambda )^{-1}\Vert)\frac{2\Vert ( \Im \lambda )^{-1}\Vert Q(\Vert( \Im \lambda )^{-1}\Vert)}{\sqrt{n}}\\
&\leq &\tilde Q(\Vert (\Im  \lambda )^{-1}\Vert)(\frac{2\Vert (\Im \lambda) ^{-1}\Vert Q(\Vert (\Im \lambda )^{-1}\Vert)}{\sqrt{n}})^4.
\end{eqnarray*}
Hence, $$G_n(\lambda)-\tilde G_n(\lambda)+E_n(\lambda) = O(\frac{1}{n^2})$$
so that \eqref{prediff} holds.
To prove \eqref{O(1)}, one can notice that, using \eqref{norme} and \eqref{normeG},
 both $G_n(\lambda)$ and $\tilde G_n(\lambda)$ 
are bounded by $\Vert (\Im  \lambda) ^{-1}\Vert$, and that \\

\noindent 
$\left\|E_n(\lambda)\right\|$ $$\leq \left\{r \max_{v=1}^r\Vert \alpha_v\Vert^2 \Vert (\Im  \lambda )^{-1}\Vert^2 +1\right\} \left\|L_n(\lambda)\right\|+r^2 \max_{v=1}^r\Vert \alpha_v\Vert^4 \Vert (\Im \lambda )^{-1}\Vert^3 \left\|L_n(\lambda)\right\|^2 ,$$
where $L_n(\lambda)=O(1/\sqrt{n})$ according to \eqref{L}.\\
~~

\noindent -On the other hand, if $$\frac{Q(\Vert (\Im \lambda )^{-1}\Vert)}{\sqrt{n}}\leq \frac{1}{2\Vert (\Im \lambda )^{-1}\Vert},$$ 
one has : 
\begin{equation}\label{lambdaprime}\left\|\Im \lambda'-\Im \lambda\right\|\leq \left\|\lambda'-\lambda \right\|\leq \frac{1}{2\Vert ( \Im \lambda) ^{-1}\Vert}\end{equation}
Denoting for any Hermitian matrix $H$ by $l_1(H)$ the smallest eigenvalue of $H$, we readily deduce from \eqref{lambdaprime} that 
 $l_1(\Im \lambda')\geq \frac{l_1(\Im \lambda)}{2}$ and 
therefore \begin{equation}\label{lambdaprimepositif}\Im \lambda' >0. \end{equation}
Then, it makes sense to consider $\tilde G_{n}(\lambda')$ which satisfies according to \eqref{subor}
\begin{eqnarray}\tilde G_{n}(\lambda')&= &G_{\sum_{u=1}^t \beta_u \otimes a_n^{(u)}}(\lambda'-\gamma -\sum_{v=1}^r\alpha_v \tilde  G_n(\lambda')\alpha_v) \nonumber\\&=&
G_{\sum_{u=1}^t \beta_u \otimes a_n^{(u)}}\left(\omega_n(\lambda')\right) \nonumber\\& =&G_{\sum_{u=1}^t \beta_u \otimes a_n^{(u)}}\left(\omega_n(\Lambda_n(\lambda -\gamma -\sum_{v=1}^r \alpha_v G_n(\lambda)\alpha_v ))\right). \label{subordin}\end{eqnarray}

%Now, we will use the following

Applying Lemma \ref{inversion} to $\rho=\lambda -\gamma -\sum_{v=1}^r\alpha_v G_n(\lambda)\alpha_v $ ( using \eqref{image} and \eqref{lambdaprimepositif}) we obtain that 
 since $\Im \lambda'=\Im  \Lambda_n(\rho)>0$ we have $\omega_n(\Lambda_n(\rho))=\rho$ and according to \eqref{subordin}
%\begin{eqnarray*}
$$ \tilde G_{n}\left(\lambda'\right)
%&=& G_{\sum_{u=1}^t \beta_u \otimes a_n^{(u)}}\left(\omega_n(\Lambda_n(\lambda -\gamma -\sum_{v=1}^r\alpha_v G_n(\lambda)\alpha_v ))\right)\%\&=&
=G_{\sum_{u=1}^t \beta_u \otimes a_n^{(u)}}\left(\lambda -\gamma -\sum_{v=1}^r \alpha_v G_n(\lambda)\alpha_v \right). $$
%   \end{eqnarray*}
Hence, by Corollary \ref{ME}, we have
\begin{equation} \label{termone}
G_n(\lambda)-\tilde G_n(\lambda')-{L_n(\lambda)}=O(\frac{1}{n\sqrt{n}}).
\end{equation}
Now, we have \\

\noindent $\tilde G_n(\lambda')-\tilde G_n(\lambda)$
\begin{eqnarray*}
&=& {\rm id}_m \otimes \tau \left\{ r_n(\lambda')\left[ (\lambda-\lambda') \otimes 1_{\cal A}\right] r_n(\lambda)\right\}\\
&=& {\rm id}_m\otimes \tau \left\{ r_n(\lambda)\left[(\lambda-\lambda') \otimes 1_{\cal A}\right] r_n(\lambda)\right\} \\&&+ 
{\rm id}_m \otimes \tau \left\{ r_n(\lambda)\left[ (\lambda-\lambda') \otimes1_{\cal A}\right] r_n(\lambda) \left[ (\lambda-\lambda') \otimes 1_{\cal A}\right] r_n(\lambda) \right\}
\\&&+ 
{\rm id}_m \otimes \tau \left\{ r_n(\lambda')\left[ (\lambda-\lambda') \otimes1_{\cal A}\right] r_n(\lambda)\left[ (\lambda-\lambda') \otimes1_{\cal A}\right] r_n(\lambda)  \left[(\lambda-\lambda') \otimes 1_{\cal A}\right] r_n(\lambda) \right\}\\
&=&\sum_{v=1}^r {\rm id}_m \otimes \tau \left\{ r_n(\lambda)\left[\alpha_v L_n(\lambda)\alpha_v \otimes 1_{\cal A}\right] r_n(\lambda)\right\} \\&&+\sum_{v,v'=1}^r {\rm id}_m \otimes \tau \left\{ r_n(\lambda)\left[\alpha_v L_n(\lambda)\alpha_v \otimes 1_{\cal A}\right] r_n(\lambda)\left[\alpha_{v'} L_n(\lambda)\alpha_{v'} \otimes 1_{\cal A}\right] r_n(\lambda)\right\}
\\&& 
+O(\frac{1}{n\sqrt{n}})
\end{eqnarray*}
where we used \eqref{lambdaprimemoinslambdaavant}, \eqref{lambdaprimemoinslambda} and \eqref{normeG} in the last line.
Hence we have 
$$
\tilde G_n(\lambda')-\tilde G_n(\lambda)+\sum_{v=1}^r\tilde G_n'(\lambda) \cdot \alpha_v L_n(\lambda) \alpha_v
-\frac{1}{2}\tilde G_n''(\lambda) \cdot\left(\sum_{v=1}^r \alpha_v L_n(\lambda) \alpha_v, \sum_{v=1}^r \alpha_v L_n(\lambda) \alpha_v\right)$$
\begin{equation} \label{termtwo}~~~~~~~~~~~~~~~~~~~~~~~~~~=O(\frac{1}{n\sqrt{n}}).
\end{equation}
(\ref{prediff}) follows from \eqref{termone} and \eqref{termtwo} since \\

\noindent 
$\left\|G_n(\lambda)-\tilde G_n(\lambda)+{E_n(\lambda)}\right\|\leq 
\left\|G_n(\lambda)-\tilde G_n(\lambda')-{L_n(\lambda)}\right\|$
$$+\left\|\tilde G_n(\lambda')-\tilde G_n(\lambda)+\sum_{p=1}^r\tilde G_n'(\lambda)\cdot \alpha_v{L_n(z)}\alpha_v -\frac{1}{2}\tilde G_n''(\lambda) \cdot\left(\sum_{p=1}^r \alpha_v L_n(\lambda) \alpha_v, \sum_{p=1}^r \alpha_v L_n(\lambda) \alpha_v\right)\right\|.
$$

\end{proof}
\begin{remarque}
\eqref{L} and \eqref{normeG} readily yield that $E_n(\lambda)=O(\frac{1}{\sqrt{n}})$. Thus,  we can deduce from (\ref{prediff}) that \begin{equation}\label{difgngntilde}G_n( \lambda)-\tilde G_n(\lambda) =O(\frac{1}{\sqrt{n}}).\end{equation}
\end{remarque}

%%%%%%%%%%%%%%%
\begin{proposition} \label{estimdiff}
 For $z \in \C  \setminus \R$, let $g_n(z)$ and $\tilde g_n(z)$ as defined in \eqref{defpetitg} and \eqref{defpetitgtilde} respectively. We have 
\begin{equation} \label{estimdiffeqn}
g_n(z)-\tilde g_n(z)+{\tilde{E}_n(z)} = O(\frac{1}{n\sqrt{n}}),
\end{equation}
where $\tilde{E}_n(z)$ is given by
\begin{equation}\label{defentilde}
\tilde{E}_n(z) = \sum_{v=1}^r tr_m\left( \tilde G_n'(zI_m)\cdot \alpha_v \tilde{L}_n(z)\alpha_v\right)  - \tr_m \tilde L_n(z)
\end{equation}
{with }\\

$\displaystyle{\tilde{L}_n(z)=\sum_{v=1}^r}$ \begin{eqnarray*}  && \bigg\{
\sum_{i,l,p=1}^n  \frac{(\kappa_4^{i,l,v} +\tilde \kappa_4^{i,l,v})}{4n^3}  \tilde Y_n(zI_m)_{pl} \alpha_v (\tilde Y_n(zI_m))_{ii}   \alpha_v (\tilde Y_n(zI_m))_{ll} \alpha_v (\tilde Y_n(zI_m))_{ii} \alpha_v \left(\tilde  Y_n(zI_m)\right)_{lp}\\
&&+ \sum_{i,l,p=1}^n \frac{(\kappa_3^{i,l,v} +\tilde \kappa_3^{i,l,v}\sqrt{-1}) }{2\sqrt{2} n^2 \sqrt{n} } \tilde Y_n(zI_m)_{pl} \alpha_v  (\tilde Y_n(zI_m))_{ii} \alpha_v  (\tilde Y_n(zI_m))_{li} \alpha_v  \left( \tilde Y_n(zI_m)\right)_{lp} \\
&&+\sum_{i,l,p=1}^n \frac{ (\kappa_3^{i,l,v} -\tilde \kappa_3^{i,l,v}\sqrt{-1}) }{2\sqrt{2} n^2 \sqrt{n} }  \tilde Y_n(zI_m)_{pl} \alpha_v  (\tilde Y_n(zI_m))_{ii} \alpha_v  (\tilde Y_n(zI_m))_{ll} \alpha_v  \left(\tilde Y_n(zI_m)\right)_{ip}\\
&&+\sum_{i,l,p=1}^n  \frac{(\kappa_3^{i,l,v} -\tilde \kappa_3^{i,l,v}\sqrt{-1})}{2\sqrt{2} n^2 \sqrt{n} }   \tilde Y_n(zI_m)_{pl} \alpha_v  (\tilde Y_n(zI_m))_{il}  \alpha_v  (\tilde Y_n(zI_m))_{ii}  \alpha_v  \left (\tilde Y_n(zI_m)\right)_{lp}\bigg\}
\end{eqnarray*}
where  $\tilde Y_n$ and $\tilde G_n$ were defined in \eqref{ytilde} and \eqref{defGntilde} respectively  so that 
 \begin{eqnarray*}\tilde Y_n(zI_m)&= &\left((zI_m -\gamma -\sum_{v=1}^r\alpha_v \tilde  G_n(zI_m)\alpha_v)\otimes I_n -  \sum_{u=1}^t \beta_u \otimes A_n^{(u)} \right)^{-1}\\&=&\left(\omega_n(zI_m)\otimes I_n -  \sum_{u=1}^t \beta_u \otimes A_n^{(u)} \right)^{-1}\end{eqnarray*}
and  \begin{eqnarray*}\tilde G_n(zI_m)&=&{\rm id}_m\otimes \tau\left( (zI_m-\gamma) \otimes 1_{\cal A} - \sum_{v=1}^r \alpha_v \otimes x_v-\sum_{u=1}^t \beta_u \otimes a_n^{(u)}\right)^{-1}\\&=&{\rm id}_m \otimes \tau\left( zI_m\otimes  1_{\cal A} - s_n\right)^{-1} .\end{eqnarray*}
\end{proposition}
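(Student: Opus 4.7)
The plan is to apply the normalized trace $\tr_m$ to the estimate of Theorem~\ref{difftilde} at $\lambda=zI_m$, and then show that $\tr_m E_n(zI_m)$ differs from $\tilde E_n(z)$ by $O(1/(n\sqrt{n}))$. Indeed, Theorem~\ref{difftilde} immediately yields
$$
g_n(z)-\tilde g_n(z)+\tr_m E_n(zI_m)=O\!\left(\frac{1}{n\sqrt{n}}\right),
$$
so the proposition reduces to establishing the identity $\tr_m E_n(zI_m)=\tilde E_n(z)+O(1/(n\sqrt{n}))$, which I will call $(\star)$.

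To recover $\tilde L_n$ from the explicit expression for $L_n$ given in Corollary~\ref{ME}, I would expand $L_n(zI_m)=\tfrac1n\sum_p[Y_n(zI_m)\Psi(zI_m)]_{pp}$ term by term, apply Corollary~\ref{estimenunsurn} (with appropriately chosen $F_n$) to replace each expected resolvent entry $\mathbb E[(R_n(zI_m))_{ij}]$ by the matching $(Y_n(zI_m))_{ij}$ modulo $O(1/\sqrt n)$, and then use the estimate \eqref{difgngntilde} together with the resolvent identity
$$
\tilde Y_n-Y_n=\tilde Y_n\Bigl(\sum_{v=1}^r\alpha_v\bigl(G_n-\tilde G_n\bigr)\alpha_v\otimes I_n\Bigr)Y_n
$$
to substitute $\tilde Y_n$ for $Y_n$. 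A careful accounting of orders, using \eqref{cumulants}, Lemma~\ref{majcarre}, Lemma~\ref{var}, \eqref{Y}--\eqref{Y2}, and the bound $\|(Y_n)_{ij}\|\le\|(\Im z)^{-1}\|$, shows that: the $M^{(5)}$ contributions contribute $O(1/(n\sqrt n))$ and may be discarded; the centred piece $\mathbb E[\alpha_v(H_n-\mathbb E H_n)\alpha_v(R_n)_{lj}]$ is controlled by Lemma~\ref{var} and is negligible; the $M^{(3)}$ summand with the off-diagonal pattern $(R_n)_{il}(R_n)_{il}(R_n)_{ij}$ as well as the $(\kappa_4-\tilde\kappa_4)$ portions of $M^{(4)}$ disappear at order $1/(n\sqrt n)$ by the Hilbert--Schmidt bounds of the Appendix; and what remains is exactly the four summands defining $\tilde L_n(z)$. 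Since $\tilde G_n'(zI_m)=O(1)$, this gives
$$
\sum_{v=1}^r\tr_m\bigl(\tilde G_n'(zI_m)\cdot\alpha_v L_n\alpha_v\bigr)-\tr_m L_n
=\sum_{v=1}^r\tr_m\bigl(\tilde G_n'(zI_m)\cdot\alpha_v\tilde L_n\alpha_v\bigr)-\tr_m\tilde L_n+O\!\left(\tfrac{1}{n\sqrt{n}}\right).
$$

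The main obstacle is the quadratic term $-\tfrac12\tilde G_n''(zI_m)\cdot\bigl(\sum_v\alpha_v L_n\alpha_v,\sum_v\alpha_v L_n\alpha_v\bigr)$ present in $E_n$ but absent from $\tilde E_n$. The crude bound $L_n=O(1/\sqrt n)$ from \eqref{L} only gives it size $O(1/n)$, which is insufficient. To close the gap I would decompose $L_n=L_n^{(3)}+L_n^{(4,5)}$ according to the cumulant order, with $L_n^{(3)}=O(1/\sqrt n)$ and $L_n^{(4,5)}=O(1/n)$. The cross term and the $(L_n^{(4,5)})^2$ term are already $O(1/(n\sqrt n))$; the remaining piece $-\tfrac12\tilde G_n''(zI_m)\cdot(\sum_v\alpha_v L_n^{(3)}\alpha_v,\sum_v\alpha_v L_n^{(3)}\alpha_v)$ has to be shown to be of that same order after tracing with $\tr_m$. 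Here the subordination identity \eqref{subor} and the invertibility supplied by Lemma~\ref{inversion} give, after differentiating twice in $\lambda$, an identity expressing $\tr_m\tilde G_n''(zI_m)\cdot(\cdot,\cdot)$ in terms of traces against $\tilde Y_n$ whose index structure forces the contraction with $L_n^{(3)}\otimes L_n^{(3)}$ to collapse by one power of $\sqrt n$, thanks to the centring enforced by the cubic cumulants and to the Hilbert--Schmidt bound of Lemma~\ref{majcarre}.

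Combining these three pieces — the $\tr_m$ of Theorem~\ref{difftilde}, the reduction of $L_n$ to $\tilde L_n$, and the gain of a factor $1/\sqrt n$ on the quadratic term — yields $(\star)$ and therefore \eqref{estimdiffeqn}. The only genuinely delicate point is the last one: everything else is a systematic, if lengthy, cumulant expansion in the spirit of the proof of Theorem~\ref{resolvante}.
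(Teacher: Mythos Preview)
Your overall structure is right and matches the paper: take $\tr_m$ of Theorem~\ref{difftilde} at $\lambda=zI_m$, reduce $L_n$ to $\tilde L_n$ by the term-by-term cumulant analysis you describe, and show separately that the quadratic term drops to $O(1/(n\sqrt n))$. The cumulant reduction (discarding the variance piece via Lemma~\ref{var}, the off-diagonal $M^{(3)}$ pattern $(R_n)_{il}(R_n)_{il}(R_n)_{ij}$, the $(\kappa_4-\tilde\kappa_4)$ half of $M^{(4)}$, and all of $M^{(5)}$) is exactly what the paper does, and your use of Corollary~\ref{estimenunsurn} together with the resolvent identity for $\tilde Y_n-Y_n$ is also how the paper passes from $Y_n$ to $\tilde Y_n$.

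The genuine gap is your treatment of the quadratic term. Your decomposition $L_n=L_n^{(3)}+L_n^{(4,5)}$ does not by itself buy the missing $1/\sqrt n$: the problematic piece
\[
\tr_m\tilde G_n''(zI_m)\!\cdot\!\Bigl(\sum_v\alpha_v L_n^{(3)}\alpha_v,\sum_v\alpha_v L_n^{(3)}\alpha_v\Bigr)
\]
is a bilinear form evaluated on an $m\times m$ matrix of size $O(1/\sqrt n)$, and differentiating the subordination relation \eqref{subor} or Lemma~\ref{inversion} twice produces no index structure over $\{1,\dots,n\}$ that Lemma~\ref{majcarre} could exploit. The object $L_n^{(3)}$ lives in $M_m(\mathbb C)$, not in $M_{mn}(\mathbb C)$, so there is nothing for a Hilbert--Schmidt bound to collapse, and the third cumulants are not centred in any sense that helps here.

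The paper's mechanism is different and does not go through $\tilde G_n''$ at all. By Riesz--Fr\'echet one writes the quadratic term as $\Tr_m[B_n^{(2)}(z)\,L_n(zI_m)]$ for a matrix $B_n^{(2)}(z)\in M_m(\mathbb C)$ that has already absorbed one copy of $L_n$, so that $\|B_n^{(2)}(z)\|_2=O(1/\sqrt n)$. One then does \emph{not} estimate the remaining $L_n$ crudely: instead one re-expands $L_n=\frac1n\sum_p[Y_n\Psi]_{pp}$ and feeds $\mathcal B^{(2)}=B_n^{(2)}\otimes I_n$ through the computation of Proposition~\ref{55}. After tracing and summing over $p$, only the exceptional third-cumulant term of \eqref{YPSIF} survives, and it now carries a factor $(R_n\mathcal B^{(2)}Y_n)_{il}$ whose operator norm is $O(1/\sqrt n)$; the bound \eqref{Odenracinepas} then gives the full $O(1/(n\sqrt n))$. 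In short, the extra $1/\sqrt n$ is obtained by transporting the smallness of $B_n^{(2)}$ \emph{through} the structure of $\Psi$, not from any algebraic identity for $\tilde G_n''$.
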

\begin{proof}
Let $z\in \mathbb{C}\setminus \mathbb{R}$ such that $\Im  z >0$.
Theorem \ref{difftilde} yields $$ g_n(z)- \tilde g_n(z)+\sum_{v=1}^r tr_m\tilde G_n'(zI_m) \cdot \alpha_v L_n(zI_m) \alpha_v -tr_mL_n(zI_m)
$$ \begin{equation}\label{doubleetoile}-\frac{1}{2} \tr_m \tilde G_n''(zI_m) \cdot\left( \sum_{v=1}^r \alpha_v L_n(zI_m) \alpha_v,  \sum_{v=1}^r \alpha_v L_n(zI_m) \alpha_v\right)=O(\frac{1}{n\sqrt{n}}).\end{equation}
First note that, by Riesz-Fr\'echet's Theorem (and using \eqref{normeG} and \eqref{L}), there exists $B_n^{(1)}(z)$ and $B_n^{(2)}(z)$ in $M_m(\C)$ such that  $$\Vert B_n^{(1)}(z) \Vert_2 \leq \vert \Im  z \vert^{-2} \left( \sum_{v=1}^r \Vert \alpha_v\Vert^2 \right)=O(1), $$  \begin{equation}\label{B2}\Vert B_n^{(2)}(z) \Vert_2=O(1/\sqrt{n}), \end{equation} and 
$$\sum_{v=1}^r \tr_m\tilde G_n'(zI_m) \cdot \alpha_v L_n(zI_m) \alpha_v = \Tr_m \left[ B_n^{(1)}(z) L_n(zI_m) \right],$$
\begin{equation}\label{etoilehat}\tr_m \tilde G_n''(zI_m) \cdot\left( \sum_{v=1}^r \alpha_v L_n(zI_m) \alpha_v,  \sum_{v=1}^r \alpha_v L_n(zI_m) \alpha_v\right)= \Tr_m \left[ B_n^{(2)}(z) L_n(zI_m) \right].\end{equation}

\noindent Recall that for $\lambda \in M_m(\C)$ such that $\Im  \lambda>0$,  $$L_n(\lambda)=\frac{1}{n}\sum_{p=1}^n  \left[Y_n(\lambda)
\Psi(\lambda) \right]_{pp},$$ where $\Psi$ is defined in \eqref{psi}.
First, note that according to \eqref{YPSIF}, we have (setting  $c_{l,i,v}= \frac{(\kappa_3^{i,l,v} -\tilde \kappa_3^{i,l,v}\sqrt{-1})}{2\sqrt{2}} $)\\
~~

\noindent $\Tr_m \left[ B_n^{(2)}(z) L_n(zI_m) \right]$
\begin{eqnarray*} 
\hspace*{-1.6cm}=&\hspace*{-0.3cm}
\displaystyle{ \sum_{v=1}^r \sum_{i,l=1}^n} \frac{c_{l,i,v}}{n^2\sqrt{n}}  \mathbb{E} \Tr_m \left\{\alpha_v(R_n(zI_m))_{ii}\alpha_v(R_n(zI_m))_{ll}\alpha_v 
 \left(R_n(zI_m) (B_n^{(2)}(z)\otimes I_n)Y_n(zI_m)\right)_{il}\right\}
\\&+O(\frac{1}{n\sqrt{n}}).
\end{eqnarray*}
Moreover, we have 
%there exists some constant $C>0$ such that  
 $$  \sum_{v=1}^r\sum_{i,l=1}^n \frac{ c_{l,i,v}}{n^2\sqrt{n}} \mathbb{E} \Tr_m \left\{\alpha_v(R_n(zI_m))_{ii}\alpha_v(R_n(zI_m))_{ll}\alpha_v 
 \left(R_n(zI_m) (B_n^{(2)}(z)\otimes I_n)Y_n(zI_m)\right)_{il}\right\}$$ 
%\begin{eqnarray*}&\leq& \frac{Cm\vert \Im  z\vert^{-2}}{n^2\sqrt{n}} \sum_{i,l=1}^n\left\|  \left(R_n(zI_m) (B_n^{(2)}(z)\otimes I_n)Y_n(zI_m)%\right)_{il}\right\|\\&\leq&\frac{Cm\vert \Im  z\vert^{-2}}{n\sqrt{n}}\left\{\sum_{i,l=1}^n\left\|  \left(R_n(zI_m) (B_n^{(2)}(z)\otimes I_n)Y_n(zI_m)%\right)_{il}\right\|^2\right\}^{\frac{1}{2}}\\&\leq & \frac{Cm\sqrt{m}\vert \Im  z\vert^{-2}}{n}\left\| R_n(zI_m) (B_n^{(2)}(z)\otimes I_n)Y_n(zI_m)\right\|%\\&=&
$$= O(\frac{1}{n\sqrt{n}}),$$
%\end{eqnarray*}
where we used  \eqref{Odenracinepas}, \eqref{norme}, \eqref{Y} and  \eqref{B2}. 
%and \eqref{lp}.
Hence \begin{equation}\label{etoiletilde}\Tr_m \left[ B_n^{(2)}(z) L_n(zI_m) \right]= O(\frac{1}{n\sqrt{n}}).\end{equation}
\eqref{doubleetoile}, \eqref{etoilehat} and \eqref{etoiletilde} yield  \begin{equation}\label{doubleetoilehat}g_n(z)- \tilde g_n(z)+\sum_{v=1}^r tr_m\tilde G_n'(zI_m) \cdot \alpha_v L_n(zI_m) \alpha_v -tr_mL_n(zI_m)=O(\frac{1}{n\sqrt{n}}).\end{equation}
 Thus, in the following we will consider $\frac{1}{n}\sum_{p=1}^n \tr_m B_n(\lambda) \left[Y_n(\lambda)
T(\lambda) \right]_{pp}$  for any $\lambda\in M_m\left( \mathbb{C}\right)$ such that $\Im \lambda >0$,  for each term $T(\lambda)$ involving in \eqref{psi} and any $m\times m$ matrix $B_n(\lambda)=O(1)$ (in the interests of simplifying notations, we deal with any $\lambda\in M_m\left( \mathbb{C}\right)$  such that $\Im \lambda >0$ instead of $zI_m$).
We set   ${\cal B}(\lambda)= B_n(\lambda) \otimes I_{n}$.

%%%%%%%%%%%%%%%%%
\noindent First, for any fixed $v \in \{1,\ldots,r\}$,\\
 
$\left|\frac{1}{n}\sum_{p,l=1}^n \tr_m B_n(\lambda)(Y_n(\lambda))_{pl} \mathbb{E}\left[ \alpha_v [H_n(\lambda) -\mathbb{E}(H_n(\lambda))] \alpha_v    [R_n(\lambda ) ]_{lp} \right]\right|$
\begin{eqnarray*}
&=& \left|\tr_m \mathbb{E}\left[ \alpha_v [H_n(\lambda) -\mathbb{E}(H_n(\lambda))] \alpha_v id\otimes  tr_n  [R_n(\lambda ) {\cal B}(\lambda)Y_n(\lambda)] \right]\right|\\
&=& \left|\tr_m \mathbb{E}\left\{ \alpha_v [H_n(\lambda) -\mathbb{E}(H_n(\lambda))] \alpha_v\right. \right.\\&&\left.\left.~~~~~~\times \left[id\otimes  tr_n  [R_n(\lambda ) {\cal B}(\lambda)Y_n(\lambda)]
-\mathbb{E}(id\otimes  tr_n  [R_n(\lambda ) {\cal B}(\lambda)Y_n(\lambda)]) \right]\right\}\right|\\&
\leq&\frac{\Vert B_n(\lambda) \Vert \Vert \alpha_v\Vert^2 C m}{n^2}
 \Vert (Im(\lambda))^{-1}\Vert^5\\&=& O(1/n^2).\end{eqnarray*}
where we used Cauchy Schwarz's inequality, \eqref{norme}, \eqref{Y} and Lemma \ref{var} in the last line.\\
%%%%%%%%%%%%%%%%%%%%%%%%%%%%%%%%%%%%%%%%%%%%%%%%%%%%%%%%%%%
We also have
$$\left|\sum_{i,p,l=1}^n  \frac{ (\kappa_3^{i,l,v} +\tilde \kappa_3^{i,l,v}\sqrt{-1})}{n^2\sqrt{n}} \tr_m B_n(\lambda)(Y_n(\lambda))_{pl}    \mathbb{E} \{\alpha_v(R_n(\lambda))_{il}\alpha_v(R_n(\lambda))_{il}\alpha_v 
 (R_n(\lambda))_{ip}\}\right|$$
\begin{eqnarray*}&=&\frac{1}{n^2\sqrt{n}} \left| \sum_{i,l=1}^n  (\kappa_3^{i,l,v} +\tilde \kappa_3^{i,l,v}\sqrt{-1})  \mathbb{E}\tr_m \{\alpha_v(R_n(\lambda))_{il}\alpha_v(R_n(\lambda))_{il}\alpha_v 
 (R_n(\lambda){\cal B}(\lambda) Y_n(\lambda))_{il}\}\right|
%\\& \leq & \frac{C \Vert B_n(\lambda) \Vert}{n^2\sqrt{n}}  \Vert \alpha_v\Vert^3\left\| (\Im  \lambda)^{-1} \right\|^2 \sum_{i,l=1}^n \left\| %(R_n(\lambda))_{il}\right\|^2
\\&=&O(\frac{1}{n\sqrt{n}})
\end{eqnarray*}
where we used \eqref{Odenpas}, \eqref{norme} and  \eqref{Y}.
% and  \eqref{pplusgrand}.

%%%%%%%%%%%%%%%%%%%%%%%%%%%%%%%%%%%%%%%%%%%%%%%%%%%%%%%%%%
\noindent Now, let us investigate  the terms corresponding to the the  $M^{(4)}(v,i,l,j)$'s in \eqref{psi}. We have 
$$  \frac{1}{4n^3} \sum_{i,p,l=1}^n  (\kappa_4^{i,l,v} +\tilde \kappa_4^{i,l,v})\mathbb{E} \tr_m B_n(\lambda)(Y_n(\lambda))_{pl}  \alpha_v (R_n(\lambda))_{il}  \alpha_v (R_n(\lambda))_{il} \alpha_v (R_n(\lambda))_{il}\alpha_v  (R_n(\lambda))_{ip}$$
\begin{eqnarray*}
&=& \frac{1}{4n^3} \sum_{i,l=1}^n  (\kappa_4^{i,l,v} +\tilde \kappa_4^{i,l,v}) \mathbb{E}\tr_m    \alpha_v(R_n(\lambda))_{il}\alpha_v(R_n(\lambda))_{il}\alpha_v(R_n(\lambda))_{il} \alpha_v  \left(R_n(\lambda){\cal B}(\lambda)Y_n(\lambda)\right)_{il}
%\\& \leq & \frac{C\Vert B_n(\lambda)\Vert \Vert \alpha_v \Vert^4}{n^3} \left\| (\Im  \lambda)^{-1} \right\|^3 \sum_{i,l=1}^n \left\| (R_n(\lambda))_{il}%\right\|^2\\& \leq &  \frac{C\Vert B_n(\lambda) \Vert  \Vert \alpha_v\Vert^4m}{n^2}\left\| (\Im \lambda)^{-1} \right\|^5
\\&=&O(1/n^2) \end{eqnarray*}
where we used \eqref{Odenpas}, \eqref{norme} and \eqref{Y}.
% and  \eqref{pplusgrand}.
Similarly the terms corresponding to  \eqref{deuxcas4}, \eqref{quatrecas4}, \eqref{tilde1},   \eqref{tilde2}, \eqref{tilde3} and  \eqref{tilde4}
%\eqref{premierk4},  
are $O(1/n^2).$\\

\noindent Since moreover each term in \eqref{psi} corresponding to the $M^{(3)}(v,i,i,j)$, $M^{(4)}(v,i,i,j)$ and $M^{(5)}(v,i,l,j)$ leads obviously to a term which is a $O(\frac{1}{n\sqrt{n}})$, it readily follows from \eqref{doubleetoilehat} that $$ g_n(z)- \tilde g_n(z)+ \sum_{v=1}^r tr_m\tilde G_n'(zI_m) \cdot \alpha_v \hat L_n(zI_m) \alpha_v -tr_m \hat L_n(zI_m)=O(\frac{1}{n\sqrt{n}}),$$
where \\

\noindent $\hat L_n (zI_m)=\sum_{v=1}^r \sum_{i,p,l=1}^n $ $$\bigg\{  \frac{(\kappa_4^{i,l,v} +\tilde \kappa_4^{i,l,v})}{4n^3} ( Y_n(zI_m)) _{pl} \alpha_v \mathbb{E} \left\{(R_n(zI_m))_{ii} \alpha_v (R_n(zI_m))_{ll} \alpha_v(R_n(zI_m))_{ii}\alpha_v (R_n(zI_m))_{lp}\right\}$$
$$+\frac{(\kappa_3^{i,l,v}+\tilde \kappa_3^{i,l,v}\sqrt{-1})}{2\sqrt{2} n^2 \sqrt{n} } ( Y_n(zI_m)) _{pl} \alpha_v  \mathbb{E} \left\{(R_n(zI_m))_{ii} \alpha_v (R_n(zI_m))_{li} \alpha_v (R_n(zI_m))_{lp}\right\}$$
$$+ \frac{(\kappa_3^{i,l,v}-\tilde \kappa_3^{i,l,v}\sqrt{-1})}{2\sqrt{2} n^2 \sqrt{n} }  ( Y_n(zI_m)) _{pl} \alpha_v \mathbb{E} \left\{ (R_n(zI_m))_{ii} \alpha_v (R_n(zI_m))_{ll} \alpha_v (R_n(zI_m))_{ip}\right\}$$
$$+ \frac{(\kappa_3^{i,l,v}-\tilde \kappa_3^{i,l,v}\sqrt{-1})}{2\sqrt{2} n^2 \sqrt{n} }  ( Y_n(zI_m)) _{pl} \alpha_v \mathbb{E} \left\{(R_n(zI_m))_{il} \alpha_v (R_n(zI_m))_{ii} \alpha_v (R_n(zI_m))_{lp}\right\}\bigg\}$$
For any $m\times m$ deterministic matrix $B_n(z)$, \\
~~

\noindent $\tr_m B_n (z)\hat L_n (zI_m)=\sum_{v=1}^r\sum_{i,l=1}^n$ $$\bigg\{\frac{(\kappa_4^{i,l,v} +\tilde \kappa_4^{i,l,v}) }{4n^3}  \mathbb{E} \left\{\tr_m  \alpha_v (R_n(zI_m))_{ii} \alpha_v (R_n(zI_m))_{ll}\alpha_v (R_n(zI_m))_{ii}\alpha_v (R_n(zI_m){\cal B}(z)Y_n(zI_m))_{ll}\right\}$$
$$+ \frac{(\kappa_3^{i,l,v}+\tilde \kappa_3^{i,l,v}\sqrt{-1}) }{2\sqrt{2} n^2 \sqrt{n} } \mathbb{E} \tr_m \left\{ \alpha_v (R_n(zI_m))_{ii} \alpha_v (R_n(zI_m))_{li} \alpha_v \left (R_n(zI_m){\cal B}(z)Y_n(zI_m)\right)_{ll}\right\}$$
$$+ \frac{ (\kappa_3^{i,l,v}-\tilde \kappa_3^{i,l,v}\sqrt{-1}) }{2\sqrt{2} n^2 \sqrt{n} } \mathbb{E} \tr_m\left\{ \alpha_v (R_n(zI_m))_{ii} \alpha_v (R_n(zI_m))_{ll} \alpha_v \left (R_n(zI_m){\cal B}(z)Y_n(zI_m)\right)_{il}\right\}$$
$$+\frac{ (\kappa_3^{i,l,v}-\tilde \kappa_3^{i,l,v}\sqrt{-1})}{2\sqrt{2} n^2 \sqrt{n} } \mathbb{E} \tr_m\left\{ \alpha_v (R_n(zI_m))_{il} \alpha_v (R_n(zI_m))_{ii} \alpha_v\left (R_n(zI_m){\cal B}(z)Y_n(zI_m)\right)_{ll}\right\}\bigg\}$$

\noindent Hence (using a decomposition similar to \eqref{decomposition}) Lemma \ref{var} readily yields that \\

\hspace*{-0.8cm} $\tr_m B_n \hat L_n (zI_m) + O(\frac{1}{n\sqrt{n}})= \sum_{v=1}^r \sum_{i,l=1}^n \tr_m$ $$\hspace*{-0.8cm}\bigg\{\frac{(\kappa_4^{i,l,v} +\tilde \kappa_4^{i,l,v}) }{4n^3} \alpha_v \mathbb{E}\left[ (R_n(zI_m))_{ii} \right]  \alpha_v \mathbb{E}\left[(R_n(zI_m))_{ll}\right] \alpha_v\mathbb{E}\left[ (R_n(zI_m))_{ii} \right]\alpha_v [ \mathbb{E}\left[\left (R_n(zI_m){\cal B}(z)Y_n(zI_m)\right)_{ll}\right]$$
$$\hspace*{-0.8cm}+ \frac{(\kappa_3^{i,l,v}+\tilde \kappa_3^{i,l,v}\sqrt{-1})}{2\sqrt{2} n^2 \sqrt{n} }   \alpha_v  \mathbb{E}\left[(R_n(zI_m))_{ii}\right]  \alpha_v  \mathbb{E}\left[(R_n(zI_m))_{li}\right] \alpha_v   \mathbb{E}\left[\left(R_n(zI_m){\cal B}(z)Y_n(zI_m)\right)_{ll}\right] $$
$$\hspace*{-0.8cm}+ \frac{(\kappa_3^{i,l,v}-\tilde \kappa_3^{i,l,v}\sqrt{-1})}{2\sqrt{2} n^2 \sqrt{n} }     \alpha_v  \mathbb{E}\left[(R_n(zI_m))_{ii}\right]  \alpha_v  \mathbb{E}\left[(R_n(zI_m))_{ll}\right]  \alpha_v  \mathbb{E}\left[ \left(R_n(zI_m){\cal B}(z)Y_n(zI_m)\right)_{il}\right]$$
$$\hspace*{-0.8cm}+ \frac{(\kappa_3^{i,l,v}-\tilde \kappa_3^{i,l,v}\sqrt{-1})}{2\sqrt{2} n^2 \sqrt{n} }  \alpha_v  \mathbb{E}\left[(R_n(zI_m))_{il} \right] \alpha_v  \mathbb{E}\left[(R_n(zI_m))_{ii}\right]  \alpha_v  \mathbb{E}\left[\left (R_n(zI_m){\cal B}(z)Y_n(zI_m)\right)_{ll}\right]\bigg\}.$$
Note that, with $Y_n$ and $\tilde Y_n$  defined in \eqref{y} and \eqref{ytilde}, we have  $$Y_n(zI_m)-\tilde Y_n(zI_m) =\sum_{v=1}^r Y_n(zI_m) \left[\alpha_v (G_n(zI_m)-\tilde G_n(zI_m)) \alpha_v \otimes I_n\right] \tilde Y_n(zI_m)$$ so that, using (\ref{difgngntilde}), \eqref{Y} and \eqref{Y2}, we can deduce that \begin{equation}\label{YmoinsYtilde} \Vert Y_n(zI_m) -\tilde Y_n(zI_m)\Vert =O(1/\sqrt{n}). \end{equation}
Now, (\ref{estimenracinen}) and (\ref{YmoinsYtilde}) obviously yield that, up to a $O(\frac{1}{n\sqrt{n}})$ correction term, one can replace any $R_n(zI_m)$  and  $Y_n(zI_m)$ by $\tilde Y_n(zI_m)$ in any term in the sum corresponding to the fourth cumulants. Now, using (\ref{lp}), (\ref{estimenracinen}) and (\ref{YmoinsYtilde}) also  yield that, up to a $O(\frac{1}{n\sqrt{n}})$ correction term, for any $p=1,\ldots,n$, one can replace  $(R_n(zI_m))$ and  $(Y_n(zI_m))$ by $\tilde Y_n(zI_m)$ in any diagonal term $(R_n(zI_m))_{pp}$ or $(R_n(zI_m){\cal B}(z)Y_n(zI_m))_{pp} $ in the sums corresponding to the third cumulants.\\
Finally, 
assume that   for $i=1,2,$ $Q^{(i)}= \tilde Y_n(zI_m) $ or $\tilde Y_n(zI_m){\cal B}(z)\tilde Y_n(zI_m)$. Let us consider any  $ Q^{(3)}=  \sum_{i,l=1}^nQ^{(3)}_{il}\otimes E_{il}$.
It is clear that if there exists some polynomial $Q$ with nonnegative coefficients such that for any $i,l \in \{1,\ldots,n\}^2$, $\Vert Q^{(3)}_{il}\Vert \leq \frac{Q(\vert \Im  z \vert^{-1})}{n}$, then 
$$\frac{1}{n^2 \sqrt{n}} \sum_{i,l=1}^n\Vert Q^{(1)}_{ii}\Vert \Vert Q^{(2)}_{ll}\Vert \Vert Q^{(3)}_{il}\Vert =O(1/n\sqrt{n}).$$
Now, if $\Vert Q^{(3)}\Vert =O(1/\sqrt{n})$,
we have $$\frac{1}{n^2 \sqrt{n}} \sum_{i,l=1}^n\Vert Q^{(1)}_{ii}\Vert \Vert Q^{(2)}_{ll}\Vert \Vert Q^{(3)}_{il}\Vert $$ $$\leq \frac{1}{n \sqrt{n}} \vert (\Im  z)^{-1}\vert^q\left(\sum_{i,l=1}^n\Vert Q^{(3)}_{il}\Vert^2 \right)^{1/2} \leq  \frac{\sqrt{m}}{n } \vert (\Im  z)^{-1}\vert^q\Vert Q^{(3)}\Vert =O(1/n\sqrt{n}) $$ for some $q\in \mathbb{N}\setminus{\{0\}},$ where we used \eqref{lp}.
\\
It is then clear that using Corollary \ref{estimenunsurn}, \eqref{Q} and (\ref{YmoinsYtilde}), 
up to a $0(\frac{1}{n\sqrt{n}})$ correction term, for any $(i,l)\in \{1,\ldots,n\}^2$, one can replace   $R_n(zI_m)$ and  $Y_n(zI_m)$ by $\tilde Y_n(zI_m)$ in any non-diagonal term $(R_n(zI_m))_{il}$, $(R_n(zI_m))_{li}$ or  $(R_n(zI_m){\cal B}Y_n)_{il}$ in the sums corresponding to the third cumulants.
Hence \eqref{estimdiffeqn} is proved for any $z\in \mathbb{C}$ such that $\Im  z >0$. \\Set $\alpha=(\alpha_1,\ldots \alpha_r)$, $\beta=(\beta_1,\ldots, \beta_t)$. Let us denote for a while $g_n=g_n^{\alpha,\beta,\gamma}$, $\tilde g_n=\tilde g_n^{\alpha,\beta,\gamma}$ and $\tilde E_n=\tilde E_n^{\alpha,\beta,\gamma}$. Note that we have similarly for  any $z\in \mathbb{C}$ such that $\Im  z >0$, 
\begin{equation} \label{estimdiffeqnmoins}
g_n^{-\alpha,-\beta,-\gamma}(z)-\tilde g^{-\alpha,-\beta,-\gamma}_n(z)+{\tilde{E}^{-\alpha,-\beta,-\gamma}_n(z)} = O(\frac{1}{n\sqrt{n}}).
\end{equation} Thus, since $g_n^{-\alpha,-\beta,-\gamma}(z)=-g_n^{\alpha,\beta,\gamma}(-z)$, $\tilde g_n^{-\alpha,-\beta,-\gamma}(z)=-\tilde g_n^{\alpha,\beta,\gamma}(-z)$ and  $\tilde E_n^{-\alpha,-\beta,-\gamma}(z)=-\tilde E_n^{\alpha,\beta,\gamma}(-z)$, it readily follows that  
\eqref{estimdiffeqn} is also valid for any $z \in \mathbb{C}$ such that $\Im  z <0$.
\end{proof}

\subsection{ From Stieltjes transform estimates to spectra.}
We start with the following key lemma.
\begin{lemme}\label{LSt}
For any fixed large n, $ \tilde E_n$ defined in Proposition \ref{estimdiff} is the Stieltjes transform of a compactly supported distribution $\nabla_n$ on $\mathbb{R}$ whose support is
included in the spectrum of $s_n=\gamma \otimes 1_{\cal A} + \sum_{v=1}^r\alpha_v \otimes x_v +\sum_{u=1}^t\beta_u \otimes a_n^{(u)}$ and such that $\nabla_n(1)=0$ .
\end{lemme}

The proof relies on the following characterization already used in \cite{Schultz05}. 

\begin{theoreme}\label{TS}\cite{Tillmann53}
\begin{itemize}
\item Let $\Lambda $ be a distribution on $\R$ with compact support. 
Define the Stieltjes transform of $\Lambda $, 
$ l:\C\setminus \R \rightarrow \C$ by 
$$l(z)=\Lambda \left( \frac{1}{z-x}\right) .$$
\noindent Then $l$ is analytic on $\C\setminus \R$
and has an analytic continuation to $\C\setminus {\rm supp}(\Lambda )$. 
Moreover
\begin{itemize}
\item[($c_1$)] $l(z)\rightarrow 0$ as $|z|\rightarrow \infty ,$
\item[($c_2$)] there exists a constant $C > 0$, 
an integer $q\in \N$ and a compact set $K\subset \R$ containing ${\rm supp}(\Lambda )$, 
such that for any $z\in \C\setminus \R$, 
$$|l(z)|\leq C\max \{ {\rm dist}(z,K)^{-q}, 1\} ,$$
\item[($c_3$)] for any $\phi \in \cal C^\infty (\R, \R)$ with compact support
$$\Lambda (\phi )=\frac{i}{2\pi }\lim _{y\rightarrow 0^+} \int _\R\phi (x)[l(x+iy)-l(x-iy)]dx.$$
\end{itemize}
\item Conversely, if $K$ is a compact subset of $\R$ 
and if $l:\C \setminus K\rightarrow \C$ is an analytic function
satisfying ($c_1$) and ($c_2$) above, 
then $l$ is the Stieltjes transform of a compactly supported distribution $\Lambda $ on $\R$. 
Moreover, ${\rm supp}(\Lambda )$ is exactly the set of singular points of $l$ in $K$. 
\end{itemize}
\end{theoreme}
\begin{lemme} The singular points of $\tilde{E}_N$ defined in \eqref{defentilde} are included in the 
spectrum of  $s_n=\gamma \otimes 1_{\cal A}+\sum_{v=1}^r \alpha_v \otimes x_v +\sum_{u=1}^t\beta_u \otimes a_n^{(u)}$. 
\end{lemme}
\begin{proof}
Let us start by noting that, as it follows from the definition \eqref{defentilde} of $\tilde{E}_N$,
it is enough to show that $\omega_n(zI_m)\otimes I_n-\sum_{u=1}^t\beta_u \otimes A_n^{(u)}
\in GL_{nm}(\mathbb C)$ (the group of invertible $nm\times nm$ complex matrices) for any $z$
in the domain of definition of $\mathbb C\ni z\mapsto\omega_n(zI_m)\in M_{m}(\mathbb C).$

Assume towards contradiction that $x_0\in\mathbb C$ is in the domain of $\omega_n(\cdot I_m)$, and 
yet $\omega_n(x_0I_m)\otimes I_n-\sum_{u=1}^t\beta_u \otimes A_n^{(u)}$ is not invertible. First,
observe that a point $x_0$ with this property must be isolated and real. Indeed, otherwise the zeros of
the analytic map $\mathbb C\ni z\mapsto\det\left(\omega_n(zI_m)\otimes I_n-
\sum_{u=1}^t\beta_u \otimes A_n^{(u)}\right)\in\mathbb C$ would have $x_0$ as a cluster point
in the interior of its domain (which coincides with the domain of $\omega_n(\cdot I_m)$), and thus
it would be identically equal to zero. However,  
$\omega_n(zI_m)\otimes I_n-\sum_{u=1}^t\beta_u \otimes A_n^{(u)}$ is invertible when $\Im  z\neq0$,
providing us with a contradiction. Consider now such an isolated $x_0$. Recall 
\begin{eqnarray*}
\tilde{g}_n(z) & = & (\tr_m\otimes\tau)\left(\left(
\omega_n(zI_m)\otimes1_\mathcal A-\sum_{u=1}^t\beta_u \otimes a_n^{(u)}\right)^{-1}\right)\\
& = & (\tr_m\otimes\tr_n)\left(\left(
\omega_n(zI_m)\otimes I_n-\sum_{u=1}^t\beta_u \otimes A_n^{(u)}\right)^{-1}\right),
\end{eqnarray*}
from before (the second equality is justified by the hypothesis that the distribution of $(A_n^{(1)},
\dots,A_n^{(t)})$ with respect to $\tr_n$ coincides with the distribution of $(a_n^{(1)},\dots,a_n^{(t)})$
with respect to $\tau$). We have seen that $\tilde{g}_n$ is defined exactly on the complement of the 
spectrum of $s_n$. Thus, it is enough to show that, given an analytic function 
$f\colon\mathbb C^+\to H^{-}(M_p(\mathbb C))=\{b\in M_p(\mathbb{C}), \Im  b <0\}$, and $x_0\in\mathbb R$ with the property that there
exists some $\epsilon>0$ such that $f$ extends analytically through $(x_0-\epsilon,x_0)\cup(x_0,
x_0+\epsilon)$ with self-adjoint values, then either both or none of $f$ and $\tr_p\circ f$ extend 
analytically to $x_0$. We shall then apply this to $p=mn$ and $f(z)=\left(\omega_n(zI_m)\otimes I_n-
\sum_{u=1}^t\beta_u \otimes A_n^{(u)}\right)^{-1}$ to conclude.
 
It is clear that if $f$ extends analytically through $x_0$, then so does $\tr_p\circ f$. Assume 
towards contradiction that $\tr_p\circ f$ extends analytically through $x_0$, but $f$ does not.
Consider an arbitrary system $\{e_1,\dots,e_p\}$ of minimal mutually orthogonal projections
in $M_p(\mathbb C)$. It is clear that $z\mapsto e_jf(z)e_j\in e_jM_p(\mathbb C)e_j\simeq\mathbb C$ is 
analytic wherever $f$ is. Moreover, $\Im  e_jf(z)e_j\leq0$ whenever $z\in\mathbb C^+$, and 
$e_jf(z)e_j\in\mathbb R$ whenever $f(z)$ is self-adjoint. If $z\mapsto e_jf(z)e_j$ extends analytically
through $x_0$ for any $e_j\in\{e_1,\dots,e_p\}$ and all systems $\{e_1,\dots,e_p\}$, then 
$z\mapsto\varphi(f(z))$ extends analytically through $x_0$ for all linear functionals $\varphi\colon
M_p(\mathbb C)\to\mathbb C$. This implies that $f$ itself is analytic around $x_0$. On the other hand,
if there exists a system $\{e_1,\dots,e_p\}$ of minimal mutually orthogonal projections
in $M_p(\mathbb C)$ which contains an $e_j$ such that $z\mapsto e_jf(z)e_j$ does not 
extend analytically through $x_0$, then, as $z\mapsto e_jf(z)e_j$ does extend analytically 
with real values through $(x_0-\epsilon,x_0)\cup(x_0,x_0+\epsilon)$ and maps $\mathbb C^+$
into $\mathbb C^-\cup\mathbb R$, it follows that $x_0$ is a simple pole of 
$z\mapsto e_jf(z)e_j$ and $\lim_{y\to0}iye_jf(x_0+iy)e_j\in(0,+\infty)$ by the Julia-Carath\'eodory
Theorem applied to $1/e_jf(z)e_j$ (see (2) Theorem 2.1 in \cite{Serban}). But then
$\tr_p(f(z))=\sum_{k=1}^pe_kf(z)e_k$, so that 
$$
\lim_{y\to0}iy\tr_p(f(x_0+iy))=\sum_{k=1}^p\lim_{y\to0}iye_kf(x_0+iy)e_k>0,
$$
which contradicts the assumption that $\tr_p\circ f$ extends analytically through $x_0$.

\end{proof}

Now, we are going to show that for any fixed large $n$, 
$\tilde{E}_n$ satisfies ($c_1$) and ($c_2$) of Theorem \ref{TS}.

\noindent First note that there exists a polynomial Q in two variables  with positive coefficients such that 
\begin{equation}\label{normedetildeEn} \vert \tilde {E}_n(z)\vert  \leq \Vert \tilde Y(zI_m))\Vert^4 Q(\Vert \tilde Y(zI_m))\Vert, \Vert r_n(zI_m)\Vert ).
\end{equation}

\noindent Let $C > 0$ be such that, for all  $n$, $\mbox{sp}(\sum_{u=1}^t\beta_u \otimes A_n^{(u)})\subset [-C;C]$ and
$\mbox{sp}(\gamma \otimes 1_{\cal A} +\sum_{v=1}^r \alpha_v \otimes x_v +\sum_{u=1}^t\beta_u \otimes a_n^{(u)})\subset [-C;C]$.

\noindent Let $d > C + \sqrt{r}\max_{v=1}^r\Vert \alpha_v \Vert $. 
For any $z\in \C$ such that $|z| > \Vert \gamma \Vert +d $, 
\begin{eqnarray*}\Vert \gamma +\sum_{v=1}^r \alpha_v \tilde G_n(zI_m) \alpha_v \Vert &\leq& \Vert \gamma \Vert + \frac{r \max_{v=1}^r\Vert \alpha_v \Vert ^2}{\vert z\vert -C}\\&\leq &\Vert \gamma \Vert + \frac{r \max_{v=1}^r\Vert \alpha_v \Vert ^2}{d-C} \\&
<& \Vert \gamma \Vert + \frac{(d -C)^2}{d -C} \\&=& \Vert \gamma \Vert + d-C\end{eqnarray*}
 Thus, $$\Vert  \gamma+\sum_{v=1}^r\alpha_v\tilde G_n(zI_m) \alpha_v +\sum_{u=1}^t \beta_u \otimes A_n^{(u)}\Vert \leq\Vert \gamma  \Vert +d $$
so that we get that for any $z\in \C$ such that $|z| > \Vert \gamma\Vert +d $, 
\begin{eqnarray*}\Vert \tilde Y_n(zI_m)\Vert &=&
\Vert ((zI_m- \gamma -\sum_{v=1}^r \alpha_v \tilde G_n(zI_m) \alpha_v)\otimes I_n-\sum_{u=1}^t\beta_u \otimes A_n^{(u)})^{-1}\Vert\\
& \leq &\frac{1}{\vert z\vert -\Vert \gamma \Vert -d }.
\end{eqnarray*}
We get readily from \eqref{normedetildeEn} that, for $|z| > \Vert \gamma\Vert +d $, 
\begin{equation}\label{bound} \vert \tilde{E}_n(z)\vert \leq \frac{1} {(\vert z\vert -\Vert \gamma \Vert -d )^4}Q\left(\frac{1} {(\vert z\vert -\Vert \gamma \Vert -d )},\frac{1}{(|z|-C)} \right) .\end{equation}
Then, it is clear than $\vert\tilde{E}_n(z)\vert \rightarrow 0$ 
when $|z|\rightarrow +\infty $ and ($c_1$) is satisfied.\\
\noindent 
Now we are going to prove ($c_2$) using  the approach of \cite{Schultz05}(Lemma 5.5). 
Denote by $\mathcal {E}_n$ the convex envelope of the spectrum of $s_n=\gamma \otimes 1_{\cal A} +\sum_{v=1}^r\alpha_v \otimes x_v +\sum_{u=1}^t\beta_u \otimes a_n^{(u)}$
and define $$K_n:=\left\{ x\in \R; {\rm dist}(x, \mathcal {E}_n)\leq 1\right\} $$
\noindent 
and $$D_n=\left\{ z\in \C; 0 < {\rm dist}(z, K_n)\leq 1\right\} .$$
\begin{itemize}
\item Let $z\in D_n\cap (\C\setminus \R)$ with $\Re  (z)\in K_n$. 
We have ${\rm dist}(z, K_n)=|\Im  z|\leq 1$. 
We have from \eqref{normedetildeEn},  (\ref{normeG}) and  \eqref{Y2} that 
$$\vert \tilde{E}_n(z)\vert \leq {\vert \Im  z \vert^{-4}} Q\left(\vert \Im  z \vert^{-1},\vert \Im  z \vert^{-1}\right).$$
Noticing that $1\leq {\vert \Im  z\vert^{-1}}$, 
we easily deduce that there exists some constant $C_0$ and some  number $q_0 \in \mathbb{N}\setminus \{0\}$ such that 
for any $z\in D_n\cap \C\setminus \R$ with $\Re  (z)\in K_n$, 
\begin{eqnarray*}
\vert \tilde{E}_n(z)\vert &\leq &C_0|\Im  z|^{-q_0}\\
&\leq &C_0{\rm dist}(z, K_n)^{-q_{0}}\\
&\leq &C_0\max ({\rm dist}(z, K_n)^{-q_{0}}; 1)
\end{eqnarray*}
\item Let $z\in D_n\cap (\C\setminus \R)$ with $\Re (z)\notin K_n$. 
Then ${\rm dist}(z,{\rm sp}(s_n))\geq 1$. 
Since $\tilde{E}_n$ is bounded on compact subsets of $\C\setminus {\rm sp}(\gamma \otimes 1_{\cal A} +\sum_{v=1}^r\alpha_v \otimes x_v +\sum_{u=1}^t\beta_u \otimes a_n^{(u)})$,
we easily deduce that there exists some constant $C_1(n)$ 
such that for any $z\in D_n$ with $\Re  (z)\notin K_n$, 
$$\vert \tilde{E}_n(z)\vert \leq C_1(n)\leq C_1(n)\max ({\rm dist}(z, K_n)^{-q_0}; 1).$$
\item Since $\vert \tilde{E}_n(z)\vert \rightarrow 0$ when $|z|\rightarrow +\infty $, 
$\tilde{E}_n$ is bounded on $\C\setminus \overline{D_n}$. 
Thus, there exists some constant $C_2(n)$ such that for any $z\in \C\setminus \overline{D_n}$, 
$$\vert \tilde{E}_n(z)\vert \leq C_2(n)=C_2(n)\max ({\rm dist}(z, K_n)^{-q_0}; 1).$$
\end{itemize}
Hence ($c_2$) is satisfied with $C(n)=\max (C_0, C_1(n), C_2(n))$ and $l=q_0$.  Thus, Theorem \ref{TS} implies that for any fixed large n, $ \tilde E_n$ defined in Proposition \ref{estimdiff} is the Stieltjes transform of a compactly supported distribution $\nabla_n$ on $\mathbb{R}$ whose support is
included in the spectrum of $s_n=\gamma \otimes 1_{\cal A} +\sum_{v=1}^r\alpha_v \otimes x_v +\sum_{u=1}^t\beta_u \otimes a_n^{(u)}$.  Following the proof of Lemma 5.6 in \cite{Schultz05} and using (\ref{bound}), 
one can show that $\nabla _n(1)=0$. 
The proof of Lemma \ref{LSt} is complete. $\Box$\\

\noindent Now,  \eqref{spectre3} can be deduced from \eqref{estimdiffeqno} by an approach inspired by \cite{HT} and \cite{Schultz05} as follows.  \\
Using the inverse Stieltjes tranform, we get respectively that, 
for any $\varphi _n $ in ${\cal C}^\infty (\R, \R)$ with compact support, 
$$\mathbb{E} [\tr_m \otimes \tr _n(\varphi _n(S_n))]-\tr_m\otimes \tau(\varphi_n(s_n))
+{\nabla _{n}(\varphi _n)}$$
$$=\frac{1}{\pi }\lim _{y\rightarrow 0^+}\Im  \int _\R\varphi _n(x)\epsilon_n(x+iy)dx,$$
where $\epsilon_n(z)=\tilde g_n(z)-g_n(z)-\tilde{E}_n(z)$ satisfies, 
according to Proposition \ref{estimdiff}, for any $z\in \C\setminus \R$, 
\begin{equation*}\label{estimgdif}
\vert \epsilon_n(z)\vert \leq \frac{1}{n\sqrt{n}}P(\vert \Im z \vert ^{-1}) .
\end{equation*} 
We refer the reader to the Appendix of \cite{CD07} 
where it is proved using the ideas of \cite{HT} that if  $h$ is an analytic function on $\C\setminus \R$ which satisfies
\begin{equation*}\label{nestimgdif}
\vert h(z)\vert \leq P(\vert \Im  z\vert ^{-1})
\end{equation*} 
\noindent for some polynomial $P$ with nonnegative coefficients and degree $k$, then
there exists a polynomial $Q$ such that
$$\limsup _{y\rightarrow 0^+}\vert \int _\R\varphi _n(x)h(x+iy)dx\vert $$
$$\leq \int _\R\int _0^{+\infty }\vert (1+D)^{k+1}\varphi _n(x)\vert Q(t)\exp(-t)dtdx$$
where $D$ stands for the derivative operator.
Hence, if there exists $K > 0$ such that, for all large $n$, 
the support of $\varphi _n$ is included in $[-K, K]$ and 
$\sup _n\sup _{x \in [-K, K]}\vert D^p\varphi _n(x)\vert =C_p < \infty$ for any $p\leq k+1$, 
dealing with $h(z) =n\sqrt{n}\epsilon_n(z)$, we deduce that there exists $C>0$ such that for all large $n$,
\begin{equation*} \label{majlimsup1} 
\limsup _{y\rightarrow 0^+}\vert \int _\R \varphi _n(x)\epsilon_n(x+iy)dx\vert \leq \frac{C}{n\sqrt{n}}
\end{equation*} 
and then 
\begin{equation}\label{StS} 
\mathbb{E} [\tr_m \otimes \tr _n(\varphi _n(S_n))]-\tr_m\otimes \tau(\varphi_n(s_n))
+{\nabla _{n}(\varphi _n)}=O(\frac{1}{n\sqrt{n}}).  
\end{equation}
Let $\rho \geq 0$ be in ${\cal C}^\infty (\R, \R)$ 
such that its support is included in $[-1;1]$ and $\int \rho (x)dx=1$. 
Let $0 < \epsilon < 1$. 
Define for any $x\in \mathbb{R}$, $$\rho _{\frac{\epsilon }{2}}(x)=\frac{2}{\epsilon }\rho(\frac{2x}{\epsilon }).$$
Set $$K_n(\epsilon )=\{ x, {\rm dist}(x, {\rm sp}(s_n))\leq \epsilon \}$$ 
and define for any $x\in \mathbb{R}$, $$f_n(\epsilon )(x)=\int _\mathbb{R} \1 _{K_n(\epsilon )}(y)\rho _{\frac{\epsilon }{2}}(x-y)dy.$$
The function $f_{n}(\epsilon )$ is in ${\cal C}^\infty (\mathbb{R}, \mathbb{R})$, 
$f_{n}(\epsilon )\equiv 1$ on $K_n(\frac{\epsilon }{2})$; 
its support is included in $K_n(2\epsilon )$. 
Since there exists $K$ such that, for all large $n$, the spectrum of $s_n$ 
is included in $[-K;K]$, for all large $n$ the support of $f_n(\epsilon )$ is included in $[-K-2;K+2]$ 
and for any $p > 0$, 
$$\sup _{x\in [-K-2;K+2]}\vert D^pf_n(\epsilon )(x)\vert \leq 
\sup _{x\in [-K-2;K+2]}\int _{-K-1}^{K+1} \vert D^p \rho _{\frac{\epsilon }{2}}(x-y)\vert dy \leq C_p(\epsilon ).$$
Thus, according to \eqref{StS}, 
\begin{equation} 
\mathbb{E} [\tr_m\otimes \tr _n(f_n(\epsilon )(S_n))]-\tr_m \otimes \tau f_n(\epsilon )(s_n)
+{\nabla _n(f_n(\epsilon ))}=O_{\epsilon }(\frac{1}{n\sqrt{n}})
\end{equation}
and 
\begin{equation}\label{prime} 
\mathbb{E} [\tr_m\otimes \tr _n((f_n'(\epsilon ))^2(S_n))]-\tr_m \otimes \tau (f_n'(\epsilon )(s_n))^2
+{\nabla _n((f_n'(\epsilon ))^2)}=O_{\epsilon }(\frac{1}{n\sqrt{n}}).
\end{equation}
According to Lemma \ref{LSt}, we have  $\nabla _n(1)=0$. 
Then, the function $\psi _n(\epsilon )\equiv 1-f_n(\epsilon )$ also satisfies
\begin{equation} 
\mathbb{E} [\tr_m\otimes \tr _n(\psi _n(\epsilon )(S_n))]-\tr_m \otimes \tau \left(\psi _n(\epsilon )(s_n)\right)
+\nabla _n(\psi _n(\epsilon ))=O_{\epsilon }(\frac{1}{n\sqrt{n}}). 
\end{equation}
Moreover, since $\psi _n'(\epsilon )=-f_n'(\epsilon )$, 
it comes readily from \eqref{prime} that 
$$\mathbb{E} [\tr _n((\psi _n'(\epsilon ))^2(S_n))]-\tr_m \otimes \tau (\psi _n'(\epsilon )(s_n))^2
+{\nabla_n((\psi _n'(\epsilon ))^2)}=O_{\epsilon }(\frac{1}{n\sqrt{n}}).$$
Now, since $\psi _n(\epsilon )\equiv 0$ on the spectrum of $s_n$, 
we deduce that 
\begin{equation}\label{psi2} 
\mathbb{E} [\tr_m\otimes \tr _n(\psi _n(\epsilon )(S_n))]=O_{\epsilon }\left(\frac{1}{n\sqrt{n}}\right)
\end{equation}
and 
\begin{equation} \label{psiprime} \mathbb{E} [\tr_m\otimes \tr _n((\psi _n'(\epsilon ))^2(S_n))]=O_{\epsilon }\left(\frac{1}{n\sqrt{n}}\right).
\end{equation}
By Lemma \ref{variance} (sticking to the proof of Proposition 4.7 in \cite{HT} with $\varphi=f_n(\epsilon)$), 
we have
$$\mathbf{V}{\left[ \tr_m \otimes \tr _n(\psi _n(\epsilon )(S_n))\right] }
\leq \frac{C }{n^2}\mathbb{E} \left[\tr_m \otimes  \tr _n\{ (\psi _n'(\epsilon )(S_n))^2\} \right] .$$
Hence, using \eqref{psiprime}, one can deduce that 
\begin{equation} \label{variancepsi}
\mathbf{V}{\left[ \tr_m \otimes \tr _n(\psi _n(\epsilon )(S_n))\right] }=O_{\epsilon }\left(\frac{1}{n^3\sqrt{n}}\right).
\end{equation}
Fix $0<\delta<\frac{1}{4}$.
Set $$Z_{n, \epsilon }:=\tr_m \otimes \tr _n(\psi _n(\epsilon )(S_n))$$ 
and $$\Omega _{n, \epsilon }=\{ \left| Z_{n, \epsilon }-\mathbb{E}\left(Z_{n, \epsilon }\right)\right| > n^{-(1+\delta)}\}.$$
Hence, using  \eqref{variancepsi}, we have $$\mathbb{P}(\Omega _{n, \epsilon })\leq n^{2+2\delta}\mathbf{V}\{ Z_{n, \epsilon }\}
=O_{\epsilon }\left(\frac{1}{n^{1+\frac{1}{2}-2\delta}}\right).$$
By Borel-Cantelli lemma, we deduce that, almost surely for all large $n$, \begin{equation}\label{concentrezn} \left|Z_{n, \epsilon }-
\mathbb{E}\left(Z_{n, \epsilon }\right)\right| \leq n^{-(1+\delta)}.\end{equation}
From \eqref{psi2} and \eqref{concentrezn}, we deduce that there exists some constant $C_\epsilon$ such that, almost surely for all large $n$,
$$\left| Z_{n, \epsilon }\right|\leq n^{-1}\left(n^{-\delta}+C_\epsilon n^{-1/2}\right).$$
Since $\psi_{n}( \epsilon )\geq \1 _{\mathbb{R}\setminus K_n({2\epsilon })}$, 
it readily follows that, almost surely for all large $n$, 
the number of eigenvalues of $S_n$ which are in $\R\setminus K_n({2\epsilon })$ 
is lower than $m\left(n^{-\delta}+C_\epsilon n^{-1/2}\right)$ and thus obviously,  almost surely for all large $n$, the number of eigenvalues of $S_n$ which are in $\R\setminus K_n({2\epsilon })$ 
 has 
to be equal to zero. Thus we have the following
\begin{theoreme} Let $\epsilon>0$.
Almost surely for all large $n$, the spectrum of $S_n$ is included in $K_n(\epsilon )=\{ x, {\rm dist}(x, {\rm sp}(s_n))\leq \epsilon \}$. 
\end{theoreme}
Since the  above theorem holds for any  $m\times m$ Hermitian matrices $\gamma$, $\{\alpha_v\}_{v=1,\ldots,r}$,  $\{\beta_u\}_{u=1,\ldots,t}$, the proof of Lemma \ref{inclu2} is complete.
%%%%%%%%%%%%
%%%%%%%%%%%%

\section{Proof of Theorem \ref{noeigenvalue}}\label{sectionnoeigenvalue}
\subsection{Linearization}\label{linearisation}

Linearization procedures are by no means unique, and no agreed upon definition of what a linearization
is exists in the literature. We use the procedure introduced in \cite[Proposition 3]{A}, which has several 
advantages, to be described below.

It is shown in \cite{A} that, given a polynomial $P\in\mathbb C\langle X_1,\dots,X_k\rangle$,
there exist $m\in\mathbb N$ and matrices $\zeta_1,\dots,\zeta_k,\gamma\in M_m(\mathbb C)$
such that $(z-P(X_1,\dots,X_k))^{-1}=\left[\left(z\hat{E}_{11}\otimes1-\gamma\otimes 1-\sum_{j=1}^k
\zeta_j\otimes X_j\right)^{-1}\right]_{11}$. Moreover, if $P=P^*$, then $\gamma$ and $\zeta_1,\dots,
\zeta_k$ can be chosen to be self-adjoint. We denote $L_P=\gamma\otimes1+\sum_{j=1}^k\zeta_j
\otimes X_j\in M_m(\mathbb C\langle X_1,\dots,X_k\rangle)$ and call it a {\em linearization} of $P$. 
The size $m$ and the matrix coefficients $\gamma,\zeta_1,\dots,\zeta_k$ aren't unique. Following \cite{BMS} (see also \cite{Mai}), we provide 
a very brief outline of a recursive construction for a linearization  $L_P$ such that $$L_P := \begin{pmatrix} 0 & u\\v & Q \end{pmatrix} \in M_m(\mathbb{C}) \otimes \mathbb{C} \langle X_1,\ldots, X_k \rangle$$
where
\begin{enumerate}
\item $ m \in \mathbb{N}$,
\item $ Q \in M_{m-1}(\mathbb{C})\otimes \mathbb{C} \langle X_1,\ldots, X_k \rangle$ is invertible,
\item 
  u is a row vector and v is a column vector, both of size $m-1$ with
entries in $\mathbb{C} \langle X_1,\ldots, X_k \rangle$,
\item  the polynomial entries in $Q, u$ and $v$ all have degree $\leq 1$,\\
\item
$${P=-uQ^{-1}v} ,$$
\item and moreover, if $P$ is self-adjoint, $L_P$ is self-adjoint.
\end{enumerate}
Thus, to linearize a monomial $P=X_{i_1}X_{i_2}X_{i_3}\cdots X_{i_{k-1}}X_{i_l}$, write
$$
L_P=-\begin{bmatrix}
0 & 0 & \cdots & 0 & 0 & X_{i_1}\\
0 & 0 & \cdots & 0 & X_{i_2} & -1\\
0 & 0 & \cdots & X_{i_3} & -1 & 0\\
\vdots&\vdots& \cdots&\vdots&\vdots&\vdots\\
0 & X_{i_{l-1}}&\cdots&0&0&0\\
X_{i_l}&-1&\cdots&0&0&0
\end{bmatrix},
$$
with the obvious adaptations if $l=1,2$. The $(l-1)\times(l-1)$ lower right corner of the 
above matrix is invertible in the algebra $M_{l-1}(\mathbb C\langle X_1,\dots,X_k\rangle)$ and 
its inverse has as entries polynomials of degree up to $l-1$ (again with the obvious modifications when 
$l\leq 2$). The constant term in the inverse's formula is simply the matrix having $-1$ on its second 
diagonal, and its spectrum included in $\{-1,1\}$.
 If the matrices $\begin{bmatrix}
0 & u_j\\
v_j & Q_j\end{bmatrix},$ 
with $u_j\in M_{1\times n_j}(\mathbb C\langle X_1,\dots,X_k\rangle),v_j\in M_{n_j\times1}
(\mathbb C\langle X_1,\dots,X_k\rangle),Q_j\in M_{n_j}(\mathbb C\langle X_1,\dots,X_k\rangle)$
are linearizations for $P_j,$ $j=1,2$, then 
$$
L_{P_1+P_2}=\begin{bmatrix}
0 & u_1 & u_2\\
v_1 & Q_1 & 0\\
v_2 & 0 &Q_2
\end{bmatrix}\in M_{n_1+n_2+1}(\mathbb C\langle X_1,\dots,X_k\rangle).
$$
In particular, we have again that $\begin{bmatrix} Q_1 & 0\\ 0 & Q_2\end{bmatrix}^{-1}\in
M_{n_1+n_2}(\mathbb C\langle X_1,\dots,X_k\rangle),$ with entries of degrees no more $\max\{n_1,n_2\}$. 
Again, its constant term has all its eigenvalues of absolute value equal to one.
The above construction does not necessarily provide a self-adjoint $L_P$, even if
$P=P^*$. However, any self-adjoint polynomial $P$ is written as a sum $P=P_0+P_0^*$ for some 
other polynomial $P_0$ (self-adjoint or not) of the same degree. Let $L_{P_0}=\begin{bmatrix}
0 & u_0\\
v_0 & Q_0
\end{bmatrix}.$  To insure that the linearization we obtain is self-adjoint, we write
$$
L_P=L_{P_0+P_0^*}=\begin{bmatrix}
0 & u_0 & v_0^*\\
u_0^* & 0 & Q_0^* \\
v_0 & Q_0 & 0
\end{bmatrix},
$$
which satisfies $L_P=L_P^*$ and linearizes $P$. Moreover, since $\begin{bmatrix} 0 & Q_0^* \\
Q_0 & 0
\end{bmatrix}^{-1}=\begin{bmatrix} 0 & Q_0^{-1} \\
(Q_0^*)^{-1} & 0
\end{bmatrix}$, the matrix $\begin{bmatrix} 0 & Q_0^* \\
Q_0 & 0
\end{bmatrix}^{-1}$ has entries which are polynomials in $X_1,\dots,X_k$, all of them of degree 
majorized by the degree of $P$, and its constant term is a complex matrix having 
spectrum included in the unit circle of the complex plane. These remarks, which the reader can find in
\cite{Mai}, will be most useful in our analysis below.

It follows from the above that if $X_1,\dots,X_k$ are elements in some complex algebra $\mathcal R$ 
with unit $1$, then $z1-P(X_1,\dots,X_k)$ is invertible in $\mathcal R$ if and only if $z\hat{E}_{11}-L_{P}
(X_1,\dots,X_k)$ is invertible in $M_m(\mathcal R)$ ($m$ being the size of the matrix $L_P$). Moreover, 
the construction above guarantees that the matrix $Q$ in the linearization $L_{P}(X_1,\dots,X_k)
=\begin{bmatrix}
0 & u\\
v & Q
\end{bmatrix}$ is invertible independently of the elements $X_1,\dots,X_k\in\mathcal R$.
We apply this to the case when $\mathcal R\subseteq\mathcal B(\mathcal H)$ is 
a unital ${\cal C}$${}^*$-algebra of bounded linear operators on the Hilbert space  $\mathcal H$, 
for various separable Hilbert spaces $\mathcal H$. We formalize this result in the following

\begin{lemme}\label{inversible}
Let  $P=P^*\in\mathbb{C}\langle X_1,\ldots,X_k\rangle$ and let
$L_P \in M_m(\mathbb{C}\langle X_1,\ldots,X_k\rangle)$
be a linearization of P with the properties outlined above. Let $y = (y_1,\ldots, y_k)$  be a k-tuple of self-adjoint operators in a ${\cal C}^*$-algebra ${\cal A}$. Then, for any $z\in \mathbb{C}$,  $z\hat E_{11}\otimes 1_{\cal A}-L_P(y)$
is invertible if and only if $z 1_{\cal A}-P(y)$ is invertible.
\end{lemme}

Beyond the property described above, we want also to compare the norms of the inverses of 
$z\hat E_{11}\otimes 1_{\cal A}-L_P(y)$ and $z 1_{\cal A}-P(y)$ when one (and hence the other) exists.

\begin{lemme}\label{distanceauspectre} Let  $P=P^*\in\mathbb{C}\langle X_1,\ldots,X_k\rangle$ and let
$L_P \in M_m(\mathbb{C}\langle X_1,\ldots,X_k\rangle)$
be a linearization of P constructed as above. Let $y_n = (y_1^{(n)},\ldots, y_k^{(n)})$  be a k-tuple of self-adjoint operators in a ${\cal C}^*$-algebra ${\cal A}$ such that $\sup_n \max_{i=1}^k \Vert y_n^{(i)}\Vert=C<+\infty$. Let $z_0 \in \mathbb{C}$ be such that, for all large $n$,  the distance from $z_0$
to $sp(P(y_n))$ is greater than $\delta$. Then, there exists a constant $\epsilon > 0$, depending only on $ \delta$, $L_P$ and $C$ such that the distance from $0$ to $sp(z_0\hat E_{11}\otimes 1_{\cal A}-L_P(y_n))$
is at least  $\epsilon$.
\end{lemme}
\begin{proof}
In this proof we only consider $P,u$ and $Q$ evaluated in $y_n$, so we will suppress $y_n$ from the 
notation without any risk of confusion. Let $L_P=\begin{bmatrix} 0 & u^*\\ u & Q\end{bmatrix}$,
so that $z_0\hat E_{11}\otimes 1_{\cal A}-L_P=\begin{bmatrix} z_0 &- u^*\\ -u & -Q\end{bmatrix}$, as 
above. We seek an $\epsilon>0$ such that 
$z_0\hat E_{11}\otimes 1_{\cal A}-L_P-z(I_m\otimes 1_{\cal A})=\begin{bmatrix} z_0-z &- u^*\\ -u & -Q-
z\end{bmatrix}$ is invertible for all $|z|<\epsilon$. We naturally require first that $-Q-z$ remains
invertible. As $Q=Q^*$, it follows by functional calculus that the spectrum of $-Q$ is at 
distance equal to $\|Q^{-1}\|^{-1}$ from zero. As noted before, $Q^{-1}\in M_{m-1}(\mathbb C
\langle y_1^{(n)},\dots,y_k^{(n)}\rangle)$, with entries depending only on $P$, so that we can 
majorize $\|Q^{-1}\|$ by a constant $\kappa>0$ depending only on $C$ and $L_P$ (and independent of
the particular $y_n$). Thus, our first condition on $\epsilon$ is $\epsilon\leq\kappa^{-1}/2$. 
Note that it follows that $\|(Q+z)^{-1}\|< 2 \kappa.$
Next,
we require that in addition $(z_0-z-u^*(-Q-z)^{-1}u)$ is invertible for all $|z|<\epsilon$. By the openness
of the resolvent set, we do know that such an $\epsilon>0$ exists. More precisely, by a geometric series 
argument, if $a$ is invertible, then $b=((b-a)a^{-1}+1)a$ is invertible whenever $\|b-a\|<\|a^{-1}\|^{-1}
$. We apply this to $a=z_0-P=z_0+u^*Q^{-1}u$ (so that $\|a^{-1}\|^{-1}>\delta$) and 
$b=z_0-z-u^*(-Q-z)^{-1}u$.
We have \\

\noindent $\|z_0+u^*Q^{-1}u-z_0+z+u^*(-Q-z)^{-1}u\|$ 
\begin{eqnarray*}
& \leq  &|z|+ \|u^*[(-Q-z)^{-1}+Q^{-1}]u\|\\
& \leq &|z|+ |z|\|u\|^2\|Q^{-1}\|\|(Q+z)^{-1}\|\\
&\leq & |z|(1+ 2\kappa^2 \|u\|^2).
\end{eqnarray*}
Since the norm of  $u$ is majorized in terms of $C$ and $L_P$ only by 
some  constant $\ell>0$, we deduce that 
 $\|z_0+u^*Q^{-1}u-z_0+z+u^*(-Q-z)^{-1}u\|\leq |z|(1+ 2\kappa^2 \ell^2).$
Thus, we require $|z|(1+ 2\kappa^2 \ell^2)<\delta$.  
 This yields that , if $|z|<\min\{\kappa^{-1}/2,\frac{\delta}{(1+2
\kappa^2\ell^2)}\}$, then $z_0\hat E_{11}\otimes 1_{\cal A}-L_P-z(I_m\otimes 1_{\cal A})$ is invertible. This
concludes the proof of our lemma.

\end{proof}

\subsection{From Lemma \ref{inclu2} to Theorem  \ref{noeigenvalue}}

Let $a_n=(a_n^{(1)},\ldots,a_n^{(t)})$ be a t-tuple of noncommutative self-adjoint  random variables which 
is free with the semicircular system $x=(x_1,\ldots,x_r)$ in $({\cal A},\tau)$, such that the distribution of  
$a_n$  in $({\cal A},\tau)$ coincides with the distribution of $(A_n^{(1)},\ldots, A_n^{(t)})$ in $({ M}_n(\mathbb{C}),\tr_n)$.
Let $P$ be a Hermitian  polynomial  in t+r noncommutative indeterminates.
Let
$L_P\in M_m\left(\mathbb{C}\langle X_1,\ldots,X_{t+r}\rangle\right)$ be a linearization of $P$ as constructed in Section \ref{linearisation}.  Fix $\delta>0$ and let $z\in \mathbb{R}$ be such that  for all large $n$, the distance from $z$ to the spectrum of  $\left(P\left(x_1,\ldots,x_r, a_n^{(1)},
\ldots,a_n^{(t)}\right)\right)$ is greater than $\delta$. According to Lemma  \ref{distanceauspectre} (and 
\eqref{normeAn}),  there exists a constant $\epsilon > 0$, depending only on $\delta,L_P$ and $\sup_{n}
\max_{1\le u\le t}\|A_n^{(u)}\|$ such that the distance from 0 to $sp(z\hat E_{11}\otimes 1_{\cal A}-
L_P(x_1,\ldots,x_r, a_n^{(1)},\ldots,a_n^{(t)}))$ is as least $\epsilon$.  Now, according to Lemma 
\ref{inclu2}, almost surely, for all large $n$, the distance from 0 to  the spectrum of $(z\hat E_{11}\otimes I_n-L_P(\frac{X_n^{(1)}}{\sqrt{n}}, \ldots, \frac{X_n^{(r)}}{\sqrt{n}}, A_n^{(1)},\ldots,A_n^{(t)}))$ is as least 
$\epsilon/2$. Hence, 
for any $z'\in ]z-\epsilon/4;z+\epsilon/4[$, 0 is not in the spectrum of $(z' \hat E_{11}\otimes I_n-
L_P(\frac{X_n^{(1)}}{\sqrt{n}}, \ldots, \frac{X_n^{(r)}}{\sqrt{n}}, A_n^{(1)},\ldots,A_n^{(t)}))$.
 Finally, according to Lemma \ref{inversible}, almost surely, for large $n$, $]z-\epsilon/4;z+\epsilon/4[$ lies outside the spectrum of $P(\frac{X_n^{(1)}}{\sqrt{n}}, \ldots, \frac{X_n^{(r)}}{\sqrt{n}}, A_n^{(1)},\ldots,A_n^{(t)})$.
A compacity argument readily yields Theorem \ref{noeigenvalue}.

\section{Proof of \eqref{safbis}}\label{strategie}

 Our approach is then very similar to that of \cite{HT} and \cite{Schultz05}. Therefore, we will recall the main steps.\\ First,  the almost sure  minoration
$$ \liminf_{n \vers +\infty} \left\| P\left(\frac{X_n^{(1)}}{\sqrt{n}}, \ldots, \frac{X_n^{(r)}}{\sqrt{n}},A_n^{(1)},\ldots,A_n^{(t)}\right) \right\| \geq  \left\| P(x_1, \ldots x_r,a_1,\ldots,a_t)\right\| \  $$
comes rather easily from \eqref{af}; this can be proved by closely following the proof of  Lemma 7.2 in \cite{HT}. So, the main
difficulty is the proof of the almost sure reverse inequality:
\begin{equation}\label{limsup}
\limsup_{n \vers +\infty} \left\| P\left(\frac{X_n^{(1)}}{\sqrt{n}}, \ldots, \frac{X_n^{(r)}}{\sqrt{n}},,A_n^{(1)},\ldots,A_n^{(t)}\right) \right\|\leq  \left\| P(x_1, \ldots x_r,,a_1,\ldots,a_t)\right\|_{\cal A}. \ 
\end{equation}
The proof of
(\ref{limsup}) consists in two steps.\\

  \noindent
{\bf Step 1: A linearization trick} (see Section 2 and the proof of Proposition 7.3 in  \cite{HT}) \\
In order to prove (\ref{limsup}), it is sufficient to prove:
\begin{lemme} \label{inclu} For all $m \in \N$, all self-adjoint matrices $\gamma, \alpha_1, \ldots,\alpha_r, \beta_1, \ldots, \beta_t$ of size $m\times m$ and
all $\epsilon >0$, almost surely for all large $n$,
$$ 
sp(\gamma \otimes I_n + \sum_{v=1}^r \alpha_v \otimes \frac{X_n^{(v)}}{\sqrt{n}}+  \sum_{u=1}^t \beta_u \otimes A_n^{(u)})$$ \begin{equation} \label{spectre}\subset
sp(\gamma \otimes 1_{\cal A} + \sum_{v=1}^r \alpha_v \otimes x_v + \sum_{u=1}^t \beta_u \otimes a_u) + ]-\epsilon, \epsilon[.
\end{equation}

\end{lemme}
\noindent {\bf Step 2: An intermediate inclusion}
Let $a_n=(a_n^{(1)},\ldots,a_n^{(t)})$ be a t-tuple of noncommutative self-adjoint  random variables which is free with the semicircular system $x=(x_1,\ldots,x_r)$ in $({\cal A},\tau$), such that the distribution of  $a_n$ coincides with the distribution of $(A_n^{(1)},\ldots, A_n^{(t)})$ in $({ M}_n(\mathbb{C}),\tr_n)$.
Male \cite{CamilleM} proved that if $(A_n^{(1)},\ldots, A_n^{(t)})$ converges strongly to $(a_1,\ldots, a_t)$, then, for any $\epsilon >0$, for all large $n$,\\

\noindent 
$
sp(\gamma \otimes 1_{\cal A} + \sum_{v=1}^r \alpha_v \otimes x_v + \sum_{u=1}^t \beta_u \otimes a_n^{(u)})$ \begin{equation} \label{spectre2} \subset
sp(\gamma \otimes 1_{\cal A} + \sum_{v=1}^r \alpha_v \otimes x_i + \sum_{u=1}^t \beta_u \otimes a_u) + ]-\epsilon, \epsilon[.
\end{equation}
Therefore,  Lemma \ref{inclu} can be deduced from Lemma \ref{inclu2}.

 %%%%%%%%%% 
\section{Appendix}
\subsection{Basic identities and inequalities}
\begin{lemme}\label{majcarre}
For any matrix $M \in M_m(\mathbb{C})\otimes M_n(\mathbb{C})$ 
and for any fixed $k$, we have \begin{equation}\label{O}  \sum_{l =1}^n ||M_{lk} ||^2 \leq m ||M ||^2\end{equation}
(or equivalently \begin{equation}\label{l} \sum_{l =1}^n ||M_{kl} ||^2 \leq m ||M ||^2.)\end{equation}
Therefore, we have 
 \begin{equation}\label{lp}
 \frac{1}{n} \sum_{k,l =1}^n ||M_{kl} ||^2 \leq m ||M ||^2.
 \end{equation}
\end{lemme}
\begin{proof}
%$$\frac{1}{n} \sum_{k,l =1}^n ||M_{kl} ||^2 \leq \frac{1}{n} \sum_{k,l =1}^n ||M_{kl} ||_{2}^2
%=m \tr_{mn} MM^* \leq m \Vert MM^*\Vert \leq m \Vert M \Vert^2.$$
Note that \begin{eqnarray*} \sum_{l =1}^n ||M_{lk} ||^2 &\leq& \sum_{l =1}^n ||M_{lk} ||_2^2\\&=& \Tr M ( I_m\otimes E_{kk}) M^*\\&=&\Tr  ( I_m\otimes E_{kk}) M^*M ( I_m\otimes E_{kk})\\&\leq& 
 \Vert M \Vert^2 \Tr ( I_m\otimes E_{kk})=m \Vert M \Vert^2  .\end{eqnarray*}
Now, since  $$ \sum_{l =1}^n ||M_{kl} ||^2=\sum_{l =1}^n ||M_{kl}^* ||^2 = \sum_{l =1}^n ||(M^*)_{lk} ||^2  \; \mbox{and} \; \Vert M^* \Vert= \Vert M \Vert,$$
\eqref{O} and \eqref{l} can be deduced from each other thanks to conjugate transposition.
Finally \eqref{l} readily yields \eqref{lp}.
\end{proof}
\begin{lemme}
Let $k\geq 1$. Let $M^{(0)}, M^{(1)},\ldots, M^{(k)}, M^{(k+1)}$, be $nm\times nm$ matrices  depending on $\lambda \in \{\rho\in M_m(\mathbb{C}), \Im \rho>0\}$ such that 
$\forall w=0, \ldots, k+1, \; \left\| M^{(w)} \right\| =O(1).$ Assume that for any $ (i,l)\in \{1,\ldots,n\}^2$,
$z_{i,l}$ are complex numbers such that  $\sup_{i,l} \vert z_{i,l} \vert \leq C$ for some constant $C$ 
and $\{ i_w(i,l) \}_{w=1,\ldots,k+1}$ and 
 $\{j_w(i,l)\}_{w=0,\ldots,k}$ are equal to either $i$ or $l$.\\
 Then, 
 \begin{itemize} \item for any $(p,q)\in \{1,\ldots,n\}^2,$ \begin{equation}\label{Oden}\sum_{i,l=1}^n z_{i,l} M^{(0)}_{pj_0} M^{(1)}_{i_1 j_1}\cdots M^{(k)}_{i_k j_k} M^{(k+1)}_{i_{k+1}q}=O_{p,q}^{(u)}(n),\end{equation}
\item  if there exists $w_0\in \{1,\ldots,k\}$ such that $(i_{w_{0}}, j_{w_0})\in \{(i,l),(l,i)\}$, then for any $(p,q)\in \{1,\ldots,n\}^2,$ \begin{equation}\label{Oderacine}\sum_{i,l=1}^n z_{i,l} M^{(0)}_{pj_0} M^{(1)}_{i_1 j_1}\cdots M^{(k)}_{i_k j_k} M^{(k+1)}_{i_{k+1}q} =O_{p,q}^{(u)}(\sqrt{n}),\end{equation}
\item  if there exists $w_0\in \{1,\ldots,k\}$ such that $(i_{w_{0}}, j_{w_0})\in \{(i,l),(l,i)\}$, then
 \begin{equation}\label{Odenracinepas}\sum_{i,l=1}^n z_{i,l}  M^{(1)}_{i_1 j_1}\cdots M^{(k)}_{i_k j_k}  =O(n\sqrt{n}),\end{equation}
 \item  if there exists $(w_0, w_1)\in \{1,\ldots,k\}^2$, $w_0\neq w_1$,  such that 
 $\{(i_{w_{0}}, j_{w_0}), (i_{w_{1}}, j_{w_1})\}$ is a subset of $\{(i,l),(l,i)\}^2$
 then 
  \begin{equation}\label{Odenpas}\sum_{i,l=1}^n z_{i,l}  M^{(1)}_{i_1 j_1}\cdots M^{(k)}_{i_k j_k}  =O(n).\end{equation}
\end{itemize}
\end{lemme}
\begin{proof}
If $(j_0, i_{k+1}) \in \{(i,l),(l,i)\}$, noticing (using Lemma \ref{majcarre}) that \begin{eqnarray*}\sum_{i,l=1}^n  \left\|M^{(0)}_{pi} \right\| \left\| M^{(k+1)}_{lq}\right\|
&\leq& \left( \sum_{i=1}^n  \left\|M^{(0)}_{pi} \right\|^2 \right)^{1/2} \sqrt{n}  \left( \sum_{l=1}^n  \left\|M^{(k+1)}_{lq} \right\|^2 \right)^{1/2} \sqrt{n} \\&=&O_{p,q}^{(u)}(n),
\end{eqnarray*}
\eqref{Oden} follows. \\
Now, if $(j_0, i_{k+1}) \in \{(i,i),(l,l)\}$, noticing  (using Lemma \ref{majcarre}) that \begin{eqnarray*}\sum_{i,l=1}^n  \left\|M^{(0)}_{pi} \right\| \left\| M^{(k+1)}_{iq}\right\|
&\leq& n \left( \sum_{i=1}^n  \left\|M^{(0)}_{pi} \right\|^2 \right)^{1/2}  \left( \sum_{i=1}^n  \left\|M^{(k+1)}_{iq} \right\|^2 \right)^{1/2} 
 \\&=&O_{p,q}^{(u)}(n),
\end{eqnarray*}
\eqref{Oden} follows. The proof of \eqref{Oden} is complete.\\

\noindent Now, assume that  there exists $w_0\in \{1,\ldots,k\}$ such that $(i_{w_{0}}, j_{w_0})\in \{(i,l),(l,i)\}$. Let $\tilde  M^{(w_0)}$ be either $ M^{(w_0)}$ or $ (M^{(w_0)})^*$.\\
If $(j_0, i_{k+1}) \in \{(i,l),(l,i)\}$, noticing (using Lemma \ref{majcarre}) that
\\

$\sum_{i,l=1}^n  \left\|M^{(0)}_{pi} \right\|\left\| \tilde M^{(w_0)}_{il}\right\| \left\| M^{(k+1)}_{lq}\right\|$  \begin{eqnarray*}
&\leq&  \sum_{l=1}^n  \left\|M^{(k+1)}_{lq} \right\| \left( \sum_{i=1}^n  \left\|M^{(0)}_{pi} \right\|^2 \right)^{1/2}   \left( \sum_{i=1}^n \left\|\tilde M^{(w_0)}_{il} \right\|^2 \right)^{1/2} \\&\leq & \left( \sum_{i=1}^n  \left\|M^{(0)}_{pi} \right\|^2 \right)^{1/2} 
\left( \sum_{i=1}^n  \left\|M^{(k+1)}_{lq} \right\|^2 \right)^{1/2} \left( \sum_{i,l=1}^n \left\|\tilde M^{(w_0)}_{il} \right\|^2 \right)^{1/2}
 \\&=&O_{p,q}^{(u)}(\sqrt{n}),
\end{eqnarray*}
\eqref{Oderacine} follows. \\
Now, if $(j_0, i_{k+1}) \in \{(i,i),(l,l)\}$, noticing  (using Lemma \ref{majcarre}) that \\

\noindent $\sum_{i,l=1}^n  \left\|M^{(0)}_{pi} \right\| \left\| \tilde M^{(w_0)}_{il}\right\| \left\| M^{(k+1)}_{iq}\right\|$ \begin{eqnarray*}
&\leq& n  \left\| \tilde M^{(w_0)}\right\| \left( \sum_{i=1}^n  \left\|M^{(0)}_{pi} \right\|^2 \right)^{1/2}  \left( \sum_{i=1}^n   \left\|M^{(k+1)}_{iq} \right\|^2 \right)^{1/2} 
 \\&=&O_{p,q}^{(u)}(n),
\end{eqnarray*}
\eqref{Oderacine} follows. The proof of \eqref{Oderacine} is complete.\\

\noindent  Assume that  there exists $w_0\in \{1,\ldots,k\}$ such that $(i_{w_{0}}, j_{w_0})\in \{(i,l),(l,i)\}$; then  noticing (using Lemma \ref{majcarre}) that \begin{eqnarray*} 
\sum_{i,l=1}^n \left\| \tilde M^{(w_0)}_{il} \right\|& \leq& n  \left( \sum_{i,l=1}^n \left\|\tilde M^{(w_0)}_{il} \right\|^2 \right)^{1/2}\\&=& O(n\sqrt{n}),
\end{eqnarray*}
\eqref{Odenracinepas} follows.\\

\noindent Now, assume that there exists $(w_0, w_1)\in \{1,\ldots,k\}^2$, $w_0\neq w_1$,  such that 
 $\{(i_{w_{0}}, j_{w_0}), (i_{w_{1}}, j_{w_1})\}$ is a subset of $\{(i,l),(l,i)\}^2$.  Let for $h=0,1$, $\tilde  M^{(w_h)}$ be either $ M^{(w_h)}$ or $ (M^{(w_h)})^*$; then  noticing (using Lemma \ref{majcarre}) that \begin{eqnarray*} 
\sum_{i,l=1}^n \left\|  \tilde M^{(w_0)}_{il} \right\| \left\|  \tilde M^{(w_1)}_{il} \right\|& \leq&   \left( \sum_{i,l=1}^n \left\|\tilde M^{(w_0)}_{il} \right\|^2 \right)^{1/2} \left( \sum_{i,l=1}^n \left\| \tilde  M^{(w_1)}_{il} \right\|^2 \right)^{1/2}\\&=& O(n),
\end{eqnarray*}
\eqref{Odenpas} follows.\\
\end{proof}
 We end  by recalling some properties of  resolvents.
 First, one can easily see that for any $\lambda$ and $\lambda^{'}$ in $M_m(C)$ such that $\Im(\lambda)$ and $\Im(\lambda^{'})$ are positive
definite,
 \begin{equation}\label{difStieljes}(\lambda \otimes 1_{{\cal A}} - s)^{-1}-(\lambda^{'} \otimes 1_{{\cal A}} -
 s)^{-1}=(\lambda \otimes 1_{{\cal A}} - s)^{-1}(\lambda^{'} -
 \lambda)(\lambda^{'} \otimes 1_{{\cal A}} - s)^{-1}.
 \end{equation}
For a Hermitian matrix $M$, the derivative w.r.t $M$ of the resolvent $R(z) = (z-M)^{-1}$ satisfies:
 \begin{equation} \label{resolvente}
 R'_M(z) . H = R(z) H R(z)   \mbox{ for all  Hermitian matrix $H$}. \end{equation}
\begin{lemme} \label{G}
Let $\lambda$ in $M_m(C)$ such that $\Im(\lambda)$ is positive
definite and $h$ be a self-adjoint element in $M_m(\C)\otimes {\cal A}$ where ${\cal A}$ is a   ${\cal C}^*$-algebra endowed with some state $\tau$. Then
 \begin{equation}\label{normeG}
 \Vert (\lambda \otimes 1_{{\cal A}} - h)^{-1}\Vert \leq ||
 \Im(\lambda)^{-1}\Vert \mbox{~~and~~~}
  || G(\lambda ) || \leq || \Im(\lambda)^{-1}
  ||,\end{equation}
where  $G(\lambda) = ({\rm id}_m \otimes \tau) [ (\lambda \otimes 1_{{\cal A}} - h)^{-1}].$
\end{lemme}

 \begin{lemme} \label{lem2}
 Let $\lambda$ in $M_m(C)$ such that $\Im(\lambda)$ is positive definite, then for any $mn\times mn$ Hermitian matrix $H$
 \begin{equation}\label{norme}
  || (\lambda \otimes I_n - H)^{-1} || \leq || \Im(\lambda)^{-1}
  ||,\end{equation}
 $$  \forall 1 \leq k,l \leq n, || (\lambda \otimes I_n - H)^{-1}_{kl} || \leq || \Im(\lambda)^{-1} ||,$$
  and for $p \geq 2$,
 \begin{equation}\label{pplusgrand}
 \frac{1}{n} \sum_{k,l =1}^n ||(\lambda \otimes I_n - H)^{-1}_{kl} ||^p \leq m ||\Im(\lambda)^{-1} ||^p.
 \end{equation}

 \end{lemme}
 \subsection{Variance estimates}
We refer the reader to the book \cite{Tou}. 
A probability measure $\mu$ on $\mathbb{R}$ is said to satisfy the Poincar\' e inequality with constant $C_{PI}$ if
 for any
${\cal C}^1$ function $f: \R\rightarrow \C$  such that $f$ and
$f' $ are in $L^2(\mu)$,
$$\mathbf{V}(f)\leq C_{PI}\int  \vert f' \vert^2 d\mu ,$$
\noindent with $\mathbf{V}(f) = \int \vert
f-\int f d\mu \vert^2 d\mu$. \\

\begin{remarque}\label{multiple} If the law of a random variable $X$ satisfies the Poincar\'e inequality with constant $C_{PI}$ then, for any fixed $\alpha \neq 0$, the law of $\alpha X$ satisfies the Poincar\'e inequality with constant $\alpha^2 C_{PI}$.\\
Assume that  probability measures $\mu_1,\ldots,\mu_M$ on $\mathbb{R}$ satisfy the Poincar\'e inequality with constant $C_{PI}(1),\ldots,C_{PI}(M)$ respectively. Then the product measure $\mu_1\otimes \cdots \otimes \mu_M$ on $\mathbb{R}^M$ satisfies the Poincar\'e inequality with constant $\displaystyle{C_{PI}^*=\max_{i\in\{1,\ldots,M\}}C_{PI}(i)}$ in the sense that for any differentiable function $f$ such that $f$ and its gradient ${\rm grad} f$ are in $L^2(\mu_1\otimes \cdots \otimes \mu_M)$,
$$\mathbf{V}(f)\leq C_{PI}^* \int \Vert {\rm grad} f \Vert_2 ^2 d\mu_1\otimes \cdots \otimes \mu_M$$
\noindent with $\mathbf{V}(f) = \int \vert
f-\int f d\mu_1\otimes \cdots \otimes \mu_M \vert^2 d\mu_1\otimes \cdots \otimes \mu_M$ (see Theorem 2.5 in  \cite{GuZe03}) .
\end{remarque}

%%%%%%%
 %%%%%%%%%%

\begin{lemme}\label{zitt}[Theorem 1.2 in \cite{BGMZ}]
Assume that the distribution of  a random variable $X$ is  supported in  $[-C;C]$ for some constant $C>1$. Let $g$ be an independent standard real  Gaussian random variable. Then  $X+\delta g$ satisfies a Poincar\'e inequality with constant 
$C_{PI}\leq \delta^2 \exp \left( 4C^2/\delta^2\right)$.
\end{lemme}

Consider the linear isomorphism $\Psi_0$ between $M_n(\C)_{sa}$ and $ \mathbb{R}^{n^2}$ defined for any  $[b_{kl}]_{k,l=1}^{n} \in M_n(\C)_{sa}$  by $$\Psi_0([b_{kl}]_{k,l=1}^{n}) = ((b_{kk})_{1\leq k\leq n}, (\sqrt{2} \Re  (b_{kl}))_{1\leq k<l \leq n},  (\sqrt{2} \Im  (b_{kl}))_{1\leq k<l \leq n}).$$ Denote by $\Psi$ the natural extension of $\Psi_0$ to a linear isomorphism between $(M_n(\C)_{sa})^r$ and $ \mathbb{R}^{rn^2}$ defined for any $(B_1, \ldots, B_r)$ in $(M_n(\C)_{sa})^r$ by 
$$\Psi (B_1,\ldots, B_r)=(\Psi_0(B_1),\ldots, \Psi_0(B_r)).$$
 \begin{lemme}\label{variance}
Consider   r  independent $n\times n$ random  Hermitian matrices $X_n^{(v)} =    [X^{(v)}_{jk}]_{j,k=1}^n$, $v=1,\ldots,r$,   such that  the  random variables $ X^{(v)}_{ii}$,
$ \sqrt{2}Re(X^{(v)}_{ij})$, $\sqrt{2} Im(X^{(v)}_{ij}),{i<j}$, are independent and satisfies
a  Poincar\'e
inequality with common  constant $C_{PI}$. Let $\tilde f: \mathbb{R}^{rn^2}\rightarrow \mathbb{C}$ be 
a  ${\cal C}^1$ function such
that $\tilde f$ and ${\rm grad} \tilde f $ are  bounded. Consider the ${\cal C}^1$ function $f: (M_n(\C)_{sa})^r \rightarrow \mathbb{C}$ given by $f=\tilde f\circ \Psi$. Then 
\begin{equation}\label{Poincare}\mathbf{V}{f(\frac{X_n^{(1)}}{\sqrt{n}}, \ldots,
\frac{X_n^{(r)}}{\sqrt{n}})} \leq \frac{C_{PI}}{n}\mathbb{E}\{\Vert {\rm grad}  f(\frac{X_n^{(1)}}{\sqrt{n}},
\ldots, \frac{X_n^{(r)}}{\sqrt{n}}) \Vert_e^2 \}\end{equation}
where for any Hermitian matrices $ Z_1,\ldots,Z_r, W_1,\ldots, W_r$, $$\langle (Z_1,\ldots,Z_r), (W_1,\ldots,W_r)\rangle_e = \sum_{v=1}^r \Tr (Z_vW_v).$$
\end{lemme}
\begin{proof}
(\ref{Poincare}) follows from  Remark \ref{multiple}, using that $\Psi$ 
is a linear isometry from  $\left((M_n(\C)_{sa})^r, \langle,\rangle_e\right)$ to 
$ \mathbb{R}^{rn^2}$ with usual Euclidean inner product.
\end{proof}

%The following  variance estimates will be useful in the proofs.
\noindent Thanks to (\ref{Poincare}),  sticking to
  the proof (from ``(4.6)'' to ``(4.13)'') of Theorem 4.5 in \cite{HT}, we similarly
get the following  variance estimates.
\begin{lemme}\label{var}
Let $X_n^{(v)}, v=1,\ldots,r$ be as defined in Lemma \ref{variance} and $A_n^{(u)}$, $u=1,\ldots,t$, be $n\times n$ deterministic Hermitian matrices such that for any $u=1,\ldots,t$, $\sup_n \left\| A_n^{(u)} \right\| <\infty$. Let $R_N$ and $H_N$ be as defined in \eqref{rn} and \eqref{defSt}.
There exists some constant $C>0$ such that, for any $n\geq 1$ and  for any  $\lambda$  in $M_m(\mathbb{C})$ such that $\Im  \lambda$ is positive definite,
we have, 
%\item \begin{equation}\label{varhn} \mathbb{E}\{\Vert H_n(\lambda)-\mathbb{E}(H_n(\lambda))\Vert^2\}\leq\frac{C m^3}{n^2}
% \Vert (Im(\lambda))^{-1}\Vert^4,\end{equation}
%\item for any $(p,q)\in \{1,\ldots,n\}^2$, $$ \mathbb{E}\{\Vert (R_n(\lambda))_{pq}-\mathbb{E}((R_n(\lambda))_{pq})\Vert^2\}\leq\frac{C m^3}{n}
% \Vert (Im(\lambda))^{-1}\Vert^4,$$
   for any deterministic $nm\times nm$ matrices $F_n^{(1)}$ and $F_n^{(2)}$ such that $\Vert F_n^{(1)}\Vert \leq K$ and $ \Vert F_n^{(2)}\Vert \leq K$, for any $(p,q)\in \{1,\ldots,n\}^2$,\\
 
 \noindent 
$ \mathbb{E}\{\Vert{\rm id}_m\otimes tr_n( F_n^{(1)}R_n(\lambda)F_n^{(2)})-\mathbb{E}({\rm id}_m\otimes tr_n(F_n^{(1)}R_n(\lambda)F_n^{(2)}))\Vert^2\}$ \begin{equation}\label{varhn}\leq\frac{K^4C m^3}{n^2}
 \Vert (\Im (\lambda))^{-1}\Vert^4,\end{equation}
 $ \mathbb{E}\{\Vert (F_n^{(1)}R_n(\lambda)F_n^{(2)})_{pq}-\mathbb{E}((F_n^{(1)}R_n(\lambda)F_n^{(2)})_{pq})\Vert^2\}$ $$~~~~~~~~~~\leq\frac{K^4C m^3}{n}
 \Vert (\Im(\lambda))^{-1}\Vert^4.$$

 \end{lemme}

{\bf Acknowledgements.} The authors  wish to thank an anonymous referee for pertinent  comments
which led to an improvement of this paper.

\end{document}